\documentclass[12pt,a4paper,reqno]{amsart}
\usepackage[T1]{fontenc}
\usepackage{lmodern}
\usepackage{amsmath,amssymb,mathtools,mathrsfs}
\usepackage{microtype}
\usepackage[a4paper,hmargin=28mm,top=26mm,bottom=28mm,includeheadfoot]{geometry}
\usepackage{tikz-cd}
\usepackage{booktabs}
\usepackage{xurl}
\usepackage{xcolor}
\definecolor{linkblue}{RGB}{35,64,105}
\usepackage[colorlinks=true,linkcolor=linkblue,citecolor=linkblue,urlcolor=linkblue]{hyperref}

\allowdisplaybreaks[1]

\newtheoremstyle{readableplain}
  {9pt plus 2pt minus 1pt}{8pt plus 2pt minus 1pt}
  {\itshape}{}{\bfseries}{.}{.5em}{}
\newtheoremstyle{readabledefinition}
  {9pt plus 2pt minus 1pt}{8pt plus 2pt minus 1pt}
  {\normalfont}{}{\bfseries}{.}{.5em}{}
\newtheoremstyle{readableremark}
  {9pt plus 2pt minus 1pt}{8pt plus 2pt minus 1pt}
  {\normalfont}{}{\itshape}{.}{.5em}{}
\theoremstyle{readableplain}
\newtheorem{theorem}{Theorem}[section]
\newtheorem{proposition}[theorem]{Proposition}
\newtheorem{lemma}[theorem]{Lemma}
\newtheorem{corollary}[theorem]{Corollary}
\newtheorem{problem}{Problem}[section]
\newtheorem{conjecture}{Conjecture}[section]
\theoremstyle{readabledefinition}

\theoremstyle{readableremark}
\newtheorem{remark}[theorem]{Remark}

\makeatletter
\renewcommand{\section}{\@startsection{section}{1}{\z@}
  {-2.5ex\@plus-.8ex\@minus-.2ex}{1.2ex\@plus.2ex}
  {\normalfont\large\bfseries\raggedright}}
\renewcommand{\subsection}{\@startsection{subsection}{2}{\z@}
  {-2ex\@plus-.6ex\@minus-.2ex}{.8ex\@plus.2ex}
  {\normalfont\bfseries\raggedright}}
\renewcommand{\subsubsection}{\@startsection{subsubsection}{3}{\z@}
  {-1.5ex\@plus-.4ex\@minus-.2ex}{.6ex\@plus.1ex}
  {\normalfont\bfseries\raggedright}}
\makeatother

\numberwithin{equation}{section}

\DeclareMathOperator{\rk}{rk}

\newcommand{\Id}{\operatorname{Id}}

\title[Stable ample vector bundles]{Stable ample vector bundles without semipositive Kobayashi curvature}
\author{Kefeng Liu}

\address{Kefeng Liu: Mathematical Sciences Research Center, Chongqing University of Technology, Chongqing 400054, China}
\email{kefengliu@cqut.edu.cn}
\author{Xueyuan Wan}
\address{Xueyuan Wan: Mathematical Science Research Center, Chongqing University of Technology,
Chongqing 400054, China}
\email{xwan@cqut.edu.cn}
\makeatletter
\@namedef{subjclassname@2020}{\textup{2020} Mathematics Subject Classification}
\makeatother
\subjclass[2020]{32L05, 14J60, 32F10, 53B40}
\keywords{Ample vector bundles, complex Finsler metrics,
Kobayashi curvature, slope stability, Hirzebruch surfaces,
$q$-convexity}
\date{}

\begin{document}
\begin{abstract}
We construct slope-stable ample rank-two vector bundles on every
Hirzebruch surface $\mathbb{F}_e$ that admit no smooth strongly pseudoconvex
Finsler metric with semipositive Kobayashi curvature. These examples
give a negative answer to Kobayashi's problem. The same bundles
give counterexamples to the smooth strongly pseudoconvex Finsler
formulation of Fulton's problem and to the Hartshorne--Schneider
conjecture on the convexity of complements. For $e=1$, these
counterexamples have a simply connected
Fano base that is not a product.
\end{abstract}
\maketitle

\section{Introduction}

Ample vector bundles arise naturally in algebraic geometry,
in the study of embeddings, vanishing theorems, and the topology
of algebraic varieties. Understanding their relation to curvature
is also a basic problem in complex geometry. Following Hartshorne
\cite{Hartshorne66}, a holomorphic vector bundle $E$ is ample
when the tautological line bundle
$\mathcal O_{\mathbb P(E^*)}(1)$ is ample. Here and throughout
the paper, $\mathbb P(E)$ parametrizes one-dimensional subspaces
of the fibres of $E$. Curvature gives a differential-geometric
approach to this algebraic notion: a Griffiths-positive
Hermitian metric implies ampleness \cite{Griffiths69}. Recently,
Du--Xie \cite{DX} constructed a counterexample to the Griffiths
conjecture: an ample rank-two vector bundle on an abelian surface
admitting no Griffiths-semipositive Hermitian metric.

A Hermitian squared norm is quadratic in the fibre variables,
whereas a complex Finsler metric allows nonquadratic dependence on the
fibre direction. Strong pseudoconvexity requires positivity of
the complex fibre Hessian of the squared metric, but implies
neither real convexity nor the triangle inequality.
An obstruction to Hermitian metrics therefore does not by itself
exclude strongly pseudoconvex Finsler metrics.

The connection between complex Finsler metrics and ample bundles
is precise: the positivity of a tautological line bundle can be
expressed through curvature and plurisubharmonicity on the total
space of a vector bundle. Kobayashi proved that a holomorphic
vector bundle $E$ over a compact complex
manifold is ample if and only if its dual
$E^*$ admits a smooth strongly pseudoconvex complex Finsler
metric with negative Kobayashi curvature
\cite[Theorem~5.1]{Kobayashi75}. This characterization makes
Finsler metrics a natural tool for studying algebraic positivity
by methods of complex analysis. It uses a metric on the dual
bundle, and Kobayashi asked whether positive curvature on $E$
itself gives an equivalent
characterization \cite[\S5, Problem, p.~162]{Kobayashi75}; see also
\cite{Kobayashi96}.

\begin{problem}[Kobayashi]
\label{prob:kobayashi}
It is reasonable to expect that $E$ is ample if and only if it
admits a complex Finsler structure of positive curvature. The
question is whether $E$ admits a complex Finsler structure of
positive curvature if and only if $E^*$ admits a complex Finsler
structure of negative curvature.
\end{problem}

The implication from positive Kobayashi curvature to ampleness
was proved by Feng, Liu and Wan \cite{FLW}. We show that the
converse fails even when positive curvature is weakened to
semipositive curvature. The obstruction applies to every smooth
strongly pseudoconvex complex Finsler metric, without assuming
real convexity in the fibres. Our examples have rank two, are
slope-stable, and occur on every Hirzebruch surface.

For an integer $e\ge0$, the Hirzebruch surface is the smooth
projective rational surface
\[
 \mathbb F_e=
 \mathbb P_{\mathbb P^1}
       \bigl(\mathcal O_{\mathbb P^1}\oplus
                    \mathcal O_{\mathbb P^1}(e)\bigr),
 \qquad \pi:\mathbb F_e\longrightarrow\mathbb P^1.
\]
Let $f$ be the divisor class of a fibre of $\pi$, and let $S$
be the section defined by the line subbundle
$\mathcal O_{\mathbb P^1}(e)$ of the displayed direct sum.
Thus $S^2=-e$, $S\cdot f=1$, $f^2=0$.
In particular, $\mathbb F_0\simeq\mathbb P^1\times\mathbb P^1$;
for $e>0$, $S$ is the negative section. We use the standard
description of these surfaces and their ample divisors in
\cite[Chapter~V, \S2]{Hartshorne77}.

\begin{theorem}\label{thm:hirz-main}
For every integer $e\ge0$, put $H=S+(3e+1)f$ and $P=4H-f$.
There exists an ample $\mu_P$-stable holomorphic vector bundle
$E$ of rank two on $\mathbb F_e$, with $c_1(E)=4H$ and
$c_2(E)=13H^2=65e+26$, which admits no smooth strongly
pseudoconvex complex Finsler metric with semipositive Kobayashi
curvature.
\end{theorem}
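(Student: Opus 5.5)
The proof splits into two independent parts: constructing $E$ with the stated invariants and properties, and establishing a numerical obstruction to semipositive Kobayashi curvature that $E$ violates. Note first that $c_1(E)^2=16H^2$ and $c_2(E)=13H^2$, with $H^2=-e+2(3e+1)=5e+2$, which matches $13H^2=65e+26$. These classes satisfy every inequality that a Griffiths-semipositive Hermitian metric would force: $c_2\ge0$, $c_1^2\ge c_2$, and the Bogomolov inequality $4c_2>c_1^2$ for stable bundles. So the obstruction must be a genuinely Finsler, stronger inequality. The one I would aim for is
\[
 2\,c_2(E)\le c_1(E)^2 \qquad\text{on a surface},
\]
and for our $E$ it fails, since $26H^2>16H^2$.

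\textbf{Obstruction.} Given a smooth strongly pseudoconvex Finsler metric $G$ on $E$, consider the induced metric on $\mathcal O_{\mathbb P(E)}(-1)$ and the curvature form $\omega$ of its dual $\mathcal O_{\mathbb P(E)}(1)$ on the threefold $\mathbb P(E)$. Strong pseudoconvexity makes $\omega$ positive on each fibre $\mathbb P^1$. Semipositive Kobayashi curvature means $\omega$ is semipositive on the horizontal distribution defined by the Chern--Finsler connection. With respect to this vertical/horizontal splitting, I would write $\omega=\omega_V+\omega_H$ and compute the fibre integrals $\pi_*\omega^3$ and $\pi_*(\omega^2)\wedge\pi^*(\cdot)$. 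The relation $\xi^2-c_1\xi+c_2=0$ on $\mathbb P(E)$ gives the identifications
\[
\pi_*\xi^3=c_1^2-c_2, \qquad \pi_*\xi^2=c_1,
\]
in the appropriate sign convention. Pointwise, $\omega_H$ is a semipositive $(1,1)$-form on the horizontal space, parametrized by the fibre point. The inequality should then follow from a pointwise algebraic inequality, a Cauchy--Schwarz/AM--GM estimate over the fibre $\mathbb P^1$, comparing $\int_{\text{fibre}}\omega_V\wedge\omega_H^2$ with $\bigl(\int_{\text{fibre}}\omega_V\wedge\omega_H\bigr)^2$. This yields $c_1^2-c_2\ge c_2$, equivalently $c_1^2\ge 2c_2$. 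I would first check the sign conventions and the rank-two normalization on the split example $\mathcal O(a)\oplus\mathcal O(b)$, where equality-type behaviour should be visible.

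\textbf{Construction.} I would build $E$ via the Serre correspondence as an extension
\[
0\to\mathcal O_{\mathbb F_e}(L)\to E\to \mathcal I_Z(4H-L)\to0,
\]
with $L$ a suitable divisor class and $Z$ a zero-dimensional subscheme of length $13H^2-L\cdot(4H-L)$ satisfying Cayley--Bacharach with respect to $|4H-2L+K_{\mathbb F_e}|$. A natural alternative is to take $E$ to be $\mathcal O(2H)\otimes F$ with $F$ a general stable bundle of $c_1=0$ and $c_2=9H^2$ in the moduli space; this is nonempty since the discriminant is positive. Then $\mu_P$-stability follows from the general theory on rational surfaces, for instance by irreducibility/density arguments, or directly by bounding the line subbundles of $F$. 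The polarization $P=4H-f$ is presumably chosen to break walls and make this work.

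\textbf{Main obstacle: ampleness.} Since $c_2$ is large, I would verify ampleness by Nakai-type criteria for $\mathcal O_{\mathbb P(E^*)}(1)$. The first step is nefness of $\xi$ via restriction to curves: every quotient line bundle of $E|_C$ must have positive degree. This needs control over the restriction to fibres $f$ and to the section $S$, where a generic splitting type must be balanced (for example $E|_f=\mathcal O(2)\oplus\mathcal O(2)$) and jumping must be bounded. I would then check $\xi^3=c_1^2-c_2=3H^2>0$ together with positivity of $\xi^2\cdot\pi^*D$ for effective $D$. The hard part is excluding curves in $\mathbb P(E^*)$ on which $\xi$ has degree $\le0$ over jumping fibres and over $S$. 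I would try to show this using the twist by $2H$ and a Bogomolov-type restriction theorem, with $2H\cdot f=2$ and $2H\cdot S=2e+2$, ensuring that the negative part of $F|_C$ is dominated.
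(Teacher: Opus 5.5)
There is a genuine gap, and it lies in the obstruction, which is the core of the theorem. The inequality $2c_2(E)\le c_1(E)^2$ does not hold for bundles with semipositive Kobayashi curvature.

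A Griffiths-semipositive Hermitian metric is itself a smooth strongly pseudoconvex Finsler metric with semipositive Kobayashi curvature. So every necessary condition for the Finsler class is automatically a necessary condition for Griffiths semipositivity. The Finsler class is larger, so its numerical consequences can only be weaker; your expectation of a ``genuinely Finsler, stronger'' inequality has the logic reversed.

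Every globally generated bundle carries such a Hermitian metric, namely the quotient metric induced from a flat trivial bundle. Already $T_{\mathbb P^2}(-1)$, a quotient of $\mathcal O^{\oplus 3}$, has $c_1^2=1<2=2c_2$. More decisively, the paper's comparison bundle $E_{\mathrm{amp}}$ of Proposition~\ref{prop:geom-ample} is the cokernel of $\mathcal O_X(-H)\oplus\mathcal O_X(-3H)\to\mathcal O_X^{\oplus4}$. It is ample and globally generated, with exactly $c_1=4H$ and $c_2=13H^2$. So ample bundles on $\mathbb F_e$ with the very Chern classes of the theorem can carry semipositive Kobayashi curvature.

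Three consequences follow. First, no Chern-number inequality can prove the statement. Second, your fibrewise Cauchy--Schwarz estimate must fail: $\pi_*\omega^2$ and $\pi_*\omega^3$ are closed forms representing $\pm c_1$ and $c_1^2-c_2$, so any such pointwise estimate would integrate to the false inequality. Third, taking ``a general stable bundle'' cannot work without further input, because whether $E$ is a counterexample depends on its holomorphic structure, not its topology. Your construction also leaves ampleness unresolved.

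The missing idea is an obstruction that is open in the holomorphic structure, together with a way to place an ample stable $E$ inside that open set.

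\begin{itemize}
\item[(i)] The paper starts from the split bundle $E_0=\mathcal O_X(5S-f)\oplus\mathcal O_X(-S+(12e+5)f)$. Its dual summands have degree $1$ against two \emph{different} ample classes, $A_1=6S+(6e+1)f$ and $A_2=S+(13e+6)f$.
\item[(ii)] For a candidate dual norm, it averages the horizontal Levi form near each line axis against that axis's own test form and integrates by parts. Using only the triangle inequality, this gives a strictly negative bound for every smooth complex norm on $E_0^*$ with plurisubharmonic square.
\item[(iii)] Rewriting this functional as $\int_\Sigma g\,b_A\,d\nu$ shows the bound persists on a whole $C^4$ neighbourhood of the Dolbeault operator (Theorem~\ref{thm:an-lines}).
\item[(iv)] Walter's irreducibility of the prioritary stack then produces ample $\mu_P$-stable structures converging to $E_0$ (Theorem~\ref{thm:geom-closure}). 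The witnesses are $E_{\mathrm{amp}}$ for ampleness and a nonsplit extension of $L_2$ by $L_1$ for stability.
\item[(v)] Finally, on an ample bundle, the polar of a semipositively curved Finsler metric can be regularized into a smooth dual norm with strictly plurisubharmonic square (Proposition~\ref{prop:an-ample-semi}). This contradicts the obstruction for the chosen $E$.
\end{itemize}

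Note that $E$ is deliberately taken close to the special split point $E_0$, not general in moduli.
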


Theorem~\ref{thm:hirz-main} answers
Problem~\ref{prob:kobayashi} negatively. Thus ampleness does
not guarantee even semipositive Kobayashi curvature on the
bundle itself. The bases are
simply connected, and for $e\ge1$ they are not products of
positive-dimensional compact complex manifolds.

We next consider two geometric properties suggested by the
curvature of positive Hermitian bundles. The second problem
concerns critical points of squared bundle norms.
Let $E$ be a holomorphic vector bundle of rank $r$
over a complex manifold $X$, equipped with a Griffiths-positive
Hermitian metric $h$, and write $G_h(v)=h(v,v)$.
Fulton's critical-point lemma \cite[p.~26]{Fulton87} states
that, for every locally closed complex submanifold
$\mathcal S\subset E\setminus\{0\}$ of dimension $k$, the
Levi form of $G_h|_{\mathcal S}$ at each critical point is
negative definite on a complex subspace of dimension at least
$k-r+1$. The lemma gives this bound whenever a critical point
exists, without requiring nondegeneracy.

Fulton then asks \cite[p.~28]{Fulton87}:
``Is this lemma true for a bundle which is known only to be ample?''
He suggests that a Finsler metric might replace the positive
Hermitian metric. We consider the following formulation for
smooth strongly pseudoconvex complex Finsler metrics, allowing
the metric to depend on the chosen submanifold.

\begin{problem}[Fulton, smooth strongly pseudoconvex Finsler formulation]
\label{prob:fulton}
Let $E$ be an ample holomorphic vector bundle of rank $r$ on
a smooth projective variety $X$, and let
$\mathcal S\subset E\setminus\{0\}$ be a locally closed
complex submanifold of dimension $k$. Does there exist a smooth
strongly pseudoconvex complex Finsler metric $p$ on $E$ such
that, with $G=p^2$, the Levi form of $G|_{\mathcal S}$ at
every critical point is negative definite on a complex subspace
of dimension at least $k-r+1$?
\end{problem}

Our example answers Problem~\ref{prob:fulton} negatively:
there is a fixed connected $\mathcal S$ for which every
candidate metric violates the bound. This also excludes
Hermitian metrics. As in Fulton's lemma, $\mathcal S$ is a
complex-analytic submanifold and need not be algebraic.
The formulation concerns bundle metrics; it makes no assertion
about arbitrary smooth functions on $\mathcal S$.

The third problem concerns the complement of a submanifold
with ample normal bundle. One expects an exhaustion with a
prescribed number of positive Levi eigenvalues; we recall the
definition of $q$-convexity in
Section~\ref{sec:preliminaries}.
The Hartshorne--Schneider conjecture,
as stated in \cite[Conjecture~4.1, p.~241]{Demailly99}, predicts
this convexity from ampleness of the normal bundle.

\begin{conjecture}[Hartshorne--Schneider]\label{conj:hartshorne-schneider}
Let $Z$ be a smooth projective variety and let $Y\subset Z$
be a smooth subvariety of codimension $q\ge1$. If the normal
bundle $N_{Y/Z}$ is ample, then $Z\setminus Y$ is $q$-convex.
\end{conjecture}

On every Hirzebruch surface, the construction in
Theorem~\ref{thm:hirz-main} gives counterexamples to both
Problem~\ref{prob:fulton} and
Conjecture~\ref{conj:hartshorne-schneider}. The same ample stable
bundle gives a fixed connected complex-analytic hypersurface
that violates the bound in Problem~\ref{prob:fulton}, and a
submanifold with ample normal bundle whose complement in a
projective variety fails the predicted convexity. Both conclusions follow from
the analytic obstructions established in the proof of
Theorem~\ref{thm:hirz-main}.

\begin{theorem}\label{thm:applications-main}
For every integer $e\ge0$, let $B=\mathbb F_e$,
$H=S+(3e+1)f$, and $P=4H-f$. The ample $\mu_P$-stable
rank-two bundle $E$ in Theorem~\ref{thm:hirz-main}, with
$c_1(E)=4H$ and $c_2(E)=65e+26$, can be chosen so that:
\begin{enumerate}
\item\label{item:applications-fulton}
There is a connected smooth Stein hypersurface
$\mathcal S\subset E^\circ:=E\setminus\{0\}$, locally closed in
the complex-analytic topology, such that every smooth strongly
pseudoconvex squared Finsler metric $G$ on $E$ has a nondegenerate
critical point on $\mathcal S$ at which the Levi form of
$G|_{\mathcal S}$ has at least two positive eigenvalues and
at most one negative eigenvalue.
\item\label{item:applications-complement}
Put $Z=\mathbb P(\mathcal O_B\oplus E)$ and
$Y=\mathbb P(\mathcal O_B)\simeq B$. Then
$N_{Y/Z}\simeq E$ is ample, and $Z\setminus Y$ is
$3$-convex but not $2$-convex. Both $Z$ and $Y$ are rational
and simply connected. If $e\ge1$, then $Y$ is not biholomorphic
to a product of positive-dimensional compact complex manifolds.
\end{enumerate}
\end{theorem}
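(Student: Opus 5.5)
The plan is to deduce both parts from the analytic obstruction built in the proof of Theorem~\ref{thm:hirz-main}, plus standard facts about projective bundles. I take that obstruction to be: for the chosen $E$, any smooth strongly pseudoconvex squared Finsler metric $G$ is forced to have a critical point of ``wrong'' signature on a fixed region of $E^\circ$. For (1) I would isolate this as a statement about a fixed hypersurface. Here $\dim E=4$ and $\mathcal S$ has dimension $k=3$ with $r=2$. Fulton's bound therefore asks for a negative definite subspace of dimension at least $k-r+1=2$, and the claimed violation (two positive eigenvalues, at most one negative) is exactly what must be produced.

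\textbf{Part (1).} I would take $\mathcal S$ to be the connected Stein hypersurface implicit in the proof of Theorem~\ref{thm:hirz-main}: a small tubular family of vectors over a suitable open set of $\mathbb F_e$, cut out so that $\mathcal S$ is Stein. On it I would argue in three steps.
\begin{enumerate}
\item Show that $G|_{\mathcal S}$ tends to $+\infty$, or to values above an interior value, along the ends of $\mathcal S$. Then $G|_{\mathcal S}$ attains an interior critical point.
\item Rule out the possibility that every such critical point has at least two negative Levi directions. The tool is the computation in the proof of Theorem~\ref{thm:hirz-main}: at a critical point the Levi form of $G|_{\mathcal S}$ splits into a fibre part, which is positive by strong pseudoconvexity, and a horizontal part controlled by the Kobayashi curvature. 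Two negative directions would force semipositive curvature along the relevant horizontal directions. The global (Chern-number or intersection) argument of Theorem~\ref{thm:hirz-main} excludes exactly this.
\item Achieve nondegeneracy by a small perturbation of $\mathcal S$ within the same holomorphic family, using Sard/transversality, chosen independently of $G$ only in the weak sense that the obstruction is open.
\end{enumerate}
The main obstacle is precisely that $\mathcal S$ must be \emph{fixed} before $G$ is chosen. I would first try to make the obstruction in Theorem~\ref{thm:hirz-main} quantitative and local on a fixed compact piece of $E^\circ$, so that one $\mathcal S$ works for all $G$.

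\textbf{Part (2), ampleness and topology.} Since $\mathbb P$ parametrizes lines, $T_{[L]}\mathbb P(V)=\operatorname{Hom}(L,V/L)$. Along $Y=\mathbb P(\mathcal O_B)$ this gives $N_{Y/Z}\simeq\operatorname{Hom}(\mathcal O_B,E)=E$, which is ample by Theorem~\ref{thm:hirz-main}.

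$Z$ is a $\mathbb P^2$-bundle over the rational, simply connected surface $\mathbb F_e$. Hence $Z$ is rational, being Zariski-locally trivial, and simply connected, by the homotopy sequence. The same holds for $Y\simeq\mathbb F_e$.

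For $e\ge1$, suppose $Y\simeq C_1\times C_2$. Then simple connectivity forces $C_i\simeq\mathbb P^1$, so $Y\simeq\mathbb F_0$. But $\mathbb F_e$ contains the curve $S$ with $S^2=-e<0$, whereas $\mathbb P^1\times\mathbb P^1$ has no curve of negative self-intersection. This is a contradiction.

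\textbf{Part (2), convexity.} Projecting a line $\ne\mathcal O_B$ to its image in $E$ identifies $Z\setminus Y$ with the total space of a line bundle over the compact threefold $\mathbb P(E)$, namely the dual of the tautological subbundle. Under this identification, a punctured neighbourhood of $Y$ corresponds to a neighbourhood of infinity, with coordinates $v\in E^\circ$ near $Y$.

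For $3$-convexity I need an exhaustion with at least $\dim Z-3+1=2$ positive Levi eigenvalues outside a compact set. I would build it as $\varphi=-\log G^*$-type functions from Kobayashi's negatively curved strongly pseudoconvex Finsler metric on $E^*$ \cite[Theorem~5.1]{Kobayashi75}. Equivalently, Sommese's theorem gives $q$-convexity with $q=$ rank of $E$ plus $1$ for the complement of a submanifold with ample normal bundle.

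For the failure of $2$-convexity, suppose an exhaustion $\psi$ had at least $3$ positive eigenvalues near $Y$. Pulling $\psi$ back to $E^\circ$ through the identification above, and averaging over $S^1$ to make it circle invariant, would produce a function on a neighbourhood of the zero section. Its sublevel structure yields a squared Finsler-type function for which the hypersurface $\mathcal S$ of part (1) has all critical points with at most one positive direction missing. This is exactly the bound excluded in (1). Equivalently, one could rebuild from $\psi$ a strongly pseudoconvex Finsler metric on $E$ with semipositive Kobayashi curvature, contradicting Theorem~\ref{thm:hirz-main}. I expect making this reconstruction of a genuine Finsler metric from $\psi$ rigorous, with homogeneity and smoothness on all of $E^\circ$, to be the delicate step. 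I would route it through part (1), which needs only the local Levi signature.
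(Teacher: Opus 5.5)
\textbf{Part (1).} This part has a genuine gap, and it sits exactly at what you call the main obstacle.

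The proof of Theorem~\ref{thm:hirz-main} contains no hypersurface and no localized obstruction. It is a global integral inequality for norms on $E^*$. For a given $G$, all it yields is that \emph{some} vector $v_0$, depending on $G$, carries a horizontal direction of positive Levi form.

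Your step~2 runs this backwards. At a critical point $v$ of $G|_{\mathcal S}$ one has $T^{1,0}_v\mathcal S=\ker\partial G(v)=H_{G,v}\oplus(V_v\cap\ker\partial G(v))$. The Levi form is block diagonal there and positive on the vertical line. So two negative eigenvalues only mean that the Kobayashi curvature is positive at that single vector, which contradicts nothing.

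Step~1 has no mechanism either. Nothing makes $G|_{\mathcal S}$ large along the ends of a fixed $\mathcal S$ for every $G$. If it did, you would get a minimum, whose Levi form is already semipositive, so step~2 would be superfluous. In any case, a critical point obtained this way is not tied to a vector where the curvature fails. Making the obstruction ``quantitative on a fixed compact piece'' cannot help, since one fixed hypersurface cannot be tangent to $\ker\partial G(v_0)$ at the bad vector for all $G$.

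The missing construction (Proposition~\ref{prop:app-fulton-test}) reverses the order and then glues:
\begin{enumerate}
\item For each $G$, take a local holomorphic graph through $v_0$ tangent to $\ker\partial G(v_0)$. Its Levi form at this critical point has $r$ positive eigenvalues.
\item Use $G_w(v_0)\neq0$ to prescribe the pure quadratic terms, making the critical point nondegenerate.
\item This test survives $C^2$-small changes of $G$ and uniformly small changes of the graph. By Lindel\"of, countably many tests therefore cover all metrics.
\item Fibre dilations $D_\lambda$ preserve each test because $G\circ D_\lambda=|\lambda|^2G$, and they separate the pieces in the $t$-coordinate.
\item Oka--Weil approximation on the Stein manifold $U\times\mathbb C^*\times\mathbb C^{r-2}$ then gives one global graph $w=f(x,t,s)$ within tolerance of every local graph.
\end{enumerate}

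\textbf{Part (2), non-$2$-convexity.} This argument also fails as proposed. Averaging $\psi$ over $S^1$ does not preserve ``at least three positive Levi eigenvalues''; the condition is not convex, because the positive subspaces can rotate. Moreover, $\psi$ is not a homogeneous squared metric, so part~(1) says nothing about it. Building a semipositively curved Finsler metric on $E$ from $\psi$ is exactly what has no available mechanism.

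The paper dualizes instead (Corollary~\ref{cor:app-complement} and Lemma~\ref{lem:app-tube}). Take $\Omega=\{\psi>c\}\cup Y$; its outward defining function $c-\psi$ has at least $n$ negative Levi eigenvalues on the complex boundary tangent. Consider the support norm $p(x,\xi)=\max_{v\in\overline\Omega\cap E_x}|\xi(v)|$ on $E^*$.
\begin{itemize}
\item It is plurisubharmonic. At a support point the Levi-negative $n$-plane projects isomorphically to the base, giving osculating holomorphic sections $\sigma$ into $\Omega$ with $p(z,\xi)\ge|\xi(\sigma(z))|$.
\item It is uniformly strongly convex, by uniform interior balls.
\end{itemize}
Lemma~\ref{lem:an-regularization} then smooths $p^2$ using ampleness, contradicting Theorem~\ref{thm:an-lines}. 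Note that this needs the stronger output of the proof of Theorem~\ref{thm:hirz-main}, namely that $E^*$ has no smooth complex norm with plurisubharmonic square. The curvature statement on $E$ alone is not enough.

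\textbf{Part (2), $3$-convexity.} Your sketch does not work as written. $G^*$ lives on $E^*$, not on the end of $Z\setminus Y$. Also, $-\log G^*$ has seminegative Levi form, since $\log G^*$ is plurisubharmonic. The Sommese-type statement you quote would need a precise reference.

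The paper instead uses $\mu_P$-stability and $c_1(E)\cdot P>0$ to obtain a Hermitian--Einstein metric $h$ with positive mean curvature. Then $1/h(v,v)$, which is exactly your $|\cdot|^2$ on $\mathcal O_{\mathbb P(E)}(1)$, has one positive horizontal and one positive radial Levi eigenvalue.

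Your treatment of $N_{Y/Z}\simeq E$, rationality, simple connectivity and the non-product property is fine.
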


In assertion~(\ref{item:applications-fulton}), $\dim\mathcal S=3$ and
$\operatorname{rank}E=2$,
so Fulton's lemma would require at least two negative Levi
eigenvalues at every critical point. The fixed connected
$\mathcal S$ has a critical point violating this bound for
every candidate metric. In assertion~(\ref{item:applications-complement}),
the surface $Y$ has codimension two in the fourfold $Z$.
The conjecture would therefore require an exhaustion of $Z\setminus Y$ with at
least three positive Levi eigenvalues outside a compact set;
the theorem excludes every such exhaustion and shows that the
least smooth convexity index is three. Thus the two
counterexamples arise from the same ample stable bundle on
each simply connected surface $\mathbb F_e$.

\begin{remark}\label{rem:applications-f1}
For $e=1$, the data are $H=S+4f$, $P=4S+15f$,
$c_1(E)=4S+16f$, and $c_2(E)=91$. The base
$B\simeq Y\simeq\mathbb F_1$ is a simply connected Fano
surface and is not a product. Hence both counterexamples occur
over a non-product Fano surface.
Among Hirzebruch surfaces, $\mathbb F_1$ is the only one that
is both Fano and not a product: $\mathbb F_0$ is
$\mathbb P^1\times\mathbb P^1$, whereas for $e\ge2$ one has
$(-K_{\mathbb F_e})\cdot S=2-e\le0$; see
Remark~\ref{rem:geom-base-properties}.
\end{remark}

We adapt the strategy of Du--Xie \cite{DX}, who combine a
Hermitian integral obstruction that persists under deformation
with the density of the ample locus. For general strongly
pseudoconvex Finsler metrics, we establish an
obstruction to all smooth complex norms on the dual bundle
whose squares are plurisubharmonic. The main analytic task is
to control their nonquadratic dependence on the fibre variables
uniformly over all candidate norms. We must also pass from
Kobayashi curvature to a dual norm without assuming that the
original metric is real convex.

We average horizontal Levi forms near the line axes of a split
bundle, using a different positive test form for each summand.
The triangle inequality and integration by parts yield a strict
negative integral that persists under deformation, without
uniform bounds on the fibre Hessians of candidate norms.
Thus a single $C^4$ neighbourhood excludes all smooth complex
norms with plurisubharmonic square (Theorem~\ref{thm:an-lines}).

A smooth strongly pseudoconvex Finsler metric with semipositive
Kobayashi curvature gives a continuous dual norm whose square
is plurisubharmonic and strongly real convex, even when the
original metric is not real convex. On an ample bundle,
homogeneous regularization \cite[\S2.3, Lemma~13]{WuIII}
gives a smooth dual norm with strictly plurisubharmonic
square (Proposition~\ref{prop:an-ample-semi}), contradicting
the obstruction. Together with explicit ample and stable
comparison bundles, Walter's irreducibility theorem for
prioritary sheaves \cite[Proposition~2]{Walter} supplies ample
stable deformations whose duals lie in the obstruction
neighbourhood, proving Theorem~\ref{thm:hirz-main}.

For the smooth strongly pseudoconvex Finsler formulation of
Fulton's problem, holomorphic approximation combines local
critical-point tests into one fixed connected complex-analytic
Stein hypersurface on which every candidate metric violates
Fulton's bound.
For the Hartshorne--Schneider counterexample, take
$Z=\mathbb P(\mathcal O_B\oplus E)$ and
$Y=\mathbb P(\mathcal O_B)$. Then $Y$ has codimension two
and ample normal bundle $E$, but $Z\setminus Y$ is not
$2$-convex. Boundary geometry and regularization would turn
any such exhaustion into a smooth dual norm with
plurisubharmonic square, contrary to the obstruction.
A Hermitian--Einstein metric of positive mean curvature proves
$3$-convexity, so the least smooth convexity index is three.

Section~\ref{sec:preliminaries} fixes the metric and curvature
conventions. Section~\ref{sec:analytic} proves the uniform norm
obstruction and its persistence under deformation, and explains
how it excludes semipositive Kobayashi curvature.
Section~\ref{sec:hirzebruch-geometry} constructs the stable ample
Hirzebruch examples.
Section~\ref{sec:applications} treats the Finsler formulation in
Problem~\ref{prob:fulton} and projective complements, including
their exact convexity index.

\section{Preliminaries}\label{sec:preliminaries}

This section recalls basic facts about complex Finsler metrics,
convex norms, and Kobayashi curvature. We review the relation
between convexity and the triangle inequality, as well as
Kobayashi's characterization of ampleness. We also fix the
conventions and notation used in the arguments that follow.

\subsection{Complex Finsler metrics and convex norms}

Let $E\to X$ be a holomorphic vector bundle of rank $r$ over
a complex manifold of dimension $n$, and write
$E^\circ=E\setminus\{0\}$ for the complement of the zero section.
We use the line convention for projectivization:
$\mathbb P(E)$ parametrizes one-dimensional subspaces of the
fibres, $\mathcal O_{\mathbb P(E)}(-1)$ is the tautological
line subbundle, and $\mathcal O_{\mathbb P(E)}(1)$ is its dual.

A \emph{smooth complex Finsler metric} is a continuous function
$p:E\to[0,\infty)$ that is smooth on $E^\circ$, positive
there, and satisfies $p(\lambda v)=|\lambda|p(v)$ for
$\lambda\in\mathbb C$. We write $G=p^2$ and call it the
\emph{squared metric}; thus $G(\lambda v)=|\lambda|^2G(v)$.
Smoothness at the zero section is not required.
In local holomorphic base and fibre coordinates $(z,v)$, the
metric is \emph{strongly pseudoconvex} if the vertical complex
Hessian $(G_{i\bar j})=(\partial^2G/\partial v_i\partial\bar v_j)$
is positive definite on $E^\circ$; see
\cite{Kobayashi96,WuIII}.

We call $p$ \emph{convex} if it is convex on the underlying
real vector space of each fibre. A \emph{complex norm} is a
continuous function $p:E\to[0,\infty)$, positive away from
zero, that satisfies $p(\lambda v)=|\lambda|p(v)$ and
$p(u+v)\le p(u)+p(v)$ for vectors in the same fibre.
For a Finsler metric, fibrewise real convexity is equivalent
to the triangle inequality.
Indeed, convexity and homogeneity give
$$p(u+v)=2p\!\left(\tfrac{u+v}{2}\right)\le p(u)+p(v).$$
Conversely, the triangle inequality and homogeneity give
$$p((1-t)u+tv)\le(1-t)p(u)+tp(v)$$ for $0\le t\le1$.
We use the term \emph{smooth complex norm} when $p$ is also
smooth away from zero. This term alone does not impose strong
pseudoconvexity.

Equivalently, $p$ is a complex norm if its fibrewise unit ball
is convex, or if $G=p^2$ is fibrewise real convex.
For the second equivalence, convexity of $p$ implies convexity
of $G$ because squaring is convex and increasing on
$[0,\infty)$. Conversely, convexity of $G$ makes
$\{G\le1\}=\{p\le1\}$ convex. For nonzero $u,v$, the vector
$(u+v)/(p(u)+p(v))$ is a convex combination of $u/p(u)$ and
$v/p(v)$, and hence belongs to this unit ball. This gives
the triangle inequality.

A squared metric $G$ is \emph{strongly real convex} if its
real fibre Hessian is positive definite away from zero; we
also call $p$ a \emph{strongly convex Finsler metric} in this
case. Homogeneity makes $G$ continuously differentiable at
zero with differential zero, so the Hessian condition implies
convexity on the whole fibre. It also implies strong
pseudoconvexity, since the complex Hessian is obtained by
averaging the real Hessian in a direction and its multiple
by $i$. Strong pseudoconvexity alone does not imply real
convexity or the triangle inequality. In particular, the
term ``convex'' in \cite{Kobayashi75} refers to what we call
strongly pseudoconvex; our real-convexity terminology follows
\cite[\S2.1]{WuIII}.

For the continuous homogeneous functions used in regularization,
we adopt quantitative versions of these definitions. A continuous
function $Q$, positive away from zero and satisfying
$Q(z,\lambda v)=|\lambda|^2Q(z,v)$,
is \emph{strongly real convex} if, locally near every point of
$E^\circ$, $Q(z,v)-c|v|^2$ is convex in the real fibre
variables for some $c>0$, uniformly for nearby $z$.
It is \emph{strongly plurisubharmonic} if locally
$Q-c(|z|^2+|v|^2)$ is plurisubharmonic for some $c>0$.
These are the strict conditions used in
\cite[\S2.3, Lemma~13]{WuIII}.

\subsection{Curvature conventions and Levi forms}

We set $d^c=\frac{i}{4\pi}(\bar\partial-\partial)$ and
$dd^c=\frac{i}{2\pi}\partial\bar\partial$.
For a Hermitian line metric $k$, the real curvature form
$R_k=-dd^c\log k$ represents $c_1$.
More generally, $R_h$ denotes the Chern curvature of a Hermitian
vector bundle multiplied by $i/(2\pi)$.
When a real $(1,1)$-form or such a curvature is evaluated on
$(u,\bar u)$, we mean its associated Hermitian form.

For a smooth real function $u$, its \emph{Levi form}
$\mathcal L_u$ is the Hermitian form with matrix
$(u_{A\bar B})$ in local holomorphic coordinates.
The function is plurisubharmonic if this form is semipositive,
and strictly plurisubharmonic if it is positive definite.
For continuous functions, plurisubharmonicity means that the
pullback to every holomorphic disc is subharmonic.
The \emph{negative Levi index} is the maximal complex dimension
of a subspace on which $\mathcal L_u$ is negative definite.
In particular, it counts negative complex Levi eigenvalues,
rather than negative eigenvalues of the real Hessian.

For a strongly pseudoconvex squared Finsler metric $G$, the
\emph{Kobayashi curvature} is represented by the horizontal form
\begin{equation}\label{eq:conv-kobayashi}
 \mathcal K_G=
 -\frac{i}{2\pi G}
 \left(G_{\alpha\bar\beta}
       -G_{\alpha\bar j}G^{\bar j i}G_{i\bar\beta}\right)
 dz^\alpha\wedge d\bar z^\beta,
\end{equation}
where $(G^{\bar j i})$ is the inverse vertical Hessian and
repeated indices are summed. Greek indices refer to base
coordinates and Latin indices to fibre coordinates.
Positive, semipositive, and negative Kobayashi curvature mean
that this form is respectively positive definite, semipositive,
and negative definite in the base directions at every nonzero
vector. For a Hermitian squared norm $G(v)=h(v,v)$, these
notions coincide with Griffiths positivity, semipositivity,
and negativity; see \cite{Kobayashi75,Kobayashi96}.

Let $V_v$ be the vertical tangent space at $v\ne0$, and let
$H_{G,v}$ be its Levi-orthogonal complement with respect to
$\mathcal L_G$. Strong pseudoconvexity gives the splitting
$T_v^{1,0}E=H_{G,v}\oplus V_v$.
For the radial vector
$R=\sum_i v_i\partial_{v_i}$, complex homogeneity gives
\begin{equation}\label{eq:conv-euler}
 \partial G(R)=G,\qquad
 \mathcal L_G(a,\bar R)=\partial G(a),\qquad
 \partial G|_{H_{G,v}}=0.
\end{equation}
The horizontal restriction of $\mathcal L_G$ is the Schur
complement in \eqref{eq:conv-kobayashi}; it is nonpositive when
Kobayashi curvature is semipositive, and negative definite
when Kobayashi curvature is positive.

\subsection{Kobayashi's characterization of ampleness}

On a compact base, a holomorphic vector bundle $E$ is
\emph{ample} if $\mathcal O_{\mathbb P(E^*)}(1)$ is ample
\cite{Hartshorne66}. The following characterization explains
the role of Finsler metrics on the dual bundle.

\begin{theorem}[Kobayashi {\cite[Theorem~5.1]{Kobayashi75}}]
\label{thm:prelim-kobayashi}
Let $E$ be a holomorphic vector bundle on a compact complex
manifold. The following conditions are equivalent:
\begin{enumerate}
\item $E$ is ample.
\item $E^*$ admits a smooth strongly pseudoconvex complex
Finsler metric with negative Kobayashi curvature.
\item $E^*$ admits a smooth squared Finsler metric $G$ that
is strictly plurisubharmonic on $(E^*)^\circ$.
\end{enumerate}
\end{theorem}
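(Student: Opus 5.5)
We prove the cycle (1)$\Rightarrow$(3)$\Rightarrow$(2)$\Rightarrow$(1). Throughout, let $\pi:\mathbb P(E^*)\to X$ be the projectivization in the line convention. The total space of $\mathcal O_{\mathbb P(E^*)}(-1)$, with its zero section blown down, is identified with $E^*$: a nonzero vector $\xi\in(E^*)^\circ$ determines the point $[\xi]$ and a vector $\xi$ in the tautological line. Under this identification, a smooth squared Finsler metric $G$ on $E^*$ is the same thing as a smooth Hermitian metric $k$ on $\mathcal O_{\mathbb P(E^*)}(-1)$, via $G(\xi)=k(\xi,\xi)$. The whole argument rests on translating between curvature of $k$ (or of its dual on $\mathcal O(1)$) and the Levi form of $G$ on $(E^*)^\circ$.

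\emph{(1)$\Rightarrow$(3).} Suppose $E$ is ample, so $\mathcal O_{\mathbb P(E^*)}(1)$ is ample. By Kodaira, it carries a Hermitian metric $k^{-1}$ of positive curvature, so $-dd^c\log k^{-1}=dd^c\log G>0$ on $\mathbb P(E^*)$. Pulled back to $(E^*)^\circ$, the form $dd^c\log G$ is semipositive, and its kernel is exactly the radial (vertical $\mathbb C^*$) direction. Now use the identity
\[
 G^{-1}\mathcal L_G=dd^c\log G+G^{-2}\,\partial G\wedge\bar\partial G ,
\]
together with $\partial G(R)=G$ from \eqref{eq:conv-euler}. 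The first term is positive on $\ker\partial G$ and the second term is positive on the radial direction $R$, which is complementary to $\ker\partial G$. Hence $\mathcal L_G>0$, i.e. $G$ is strictly plurisubharmonic on $(E^*)^\circ$.

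\emph{(3)$\Rightarrow$(2).} Restricting $\mathcal L_G>0$ to the vertical directions gives strong pseudoconvexity. Restricting to the Levi-orthogonal horizontal space $H_{G,v}$, the Hessian is the Schur complement $G_{\alpha\bar\beta}-G_{\alpha\bar j}G^{\bar ji}G_{i\bar\beta}$, which is therefore positive definite. Since \eqref{eq:conv-kobayashi} carries a minus sign, this says the Kobayashi curvature of $G$ is negative.

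\emph{(2)$\Rightarrow$(1).} Run the computation of (1)$\Rightarrow$(3) backwards. Negative Kobayashi curvature means the horizontal Schur complement is positive, and strong pseudoconvexity means the vertical Hessian is positive, so $\mathcal L_G>0$ on $H_{G,v}\oplus V_v$. Next I would show that $dd^c\log G$ is positive on $\ker\partial G$ modulo $R$. By \eqref{eq:conv-euler}, $\partial G$ vanishes on $H_{G,v}$. On vertical vectors in $\ker\partial G$ not proportional to $R$, $\log G$ restricted to a fibre has strictly positive Hessian transverse to the radial direction; this is the Fubini--Study type computation for a strongly pseudoconvex homogeneous function. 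Hence $dd^c\log G$ descends to a positive form on $\mathbb P(E^*)$ representing $c_1(\mathcal O(1))$, and Kodaira's embedding theorem makes $\mathcal O(1)$ ample, i.e. $E$ is ample.

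The main obstacle is the linear algebra in (2)$\Rightarrow$(1). One must check that positivity of the horizontal Schur complement plus vertical positivity, combined with homogeneity, yields positivity of $dd^c\log G$ on all of $\ker\partial G$ modulo $R$, including mixed horizontal-vertical vectors. I would handle this by working in the adapted splitting $H_{G,v}\oplus V_v$, where $\mathcal L_G$ is block diagonal and $\partial G$ vanishes on $H_{G,v}$. In that frame, $dd^c\log G=G^{-1}\mathcal L_G-G^{-2}\partial G\wedge\bar\partial G$ becomes a block matrix whose only correction term sits in the vertical block. There it reduces to the single-fibre statement that the Hermitian form $\mathcal L_G(\cdot,\cdot)-G^{-1}|\partial G(\cdot)|^2$ is semipositive with kernel exactly $\mathbb C R$. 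This in turn follows from Cauchy--Schwarz for $\mathcal L_G$, using $\mathcal L_G(a,\bar R)=\partial G(a)$ and $\mathcal L_G(R,\bar R)=G$.
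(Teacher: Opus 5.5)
Your proof is correct. For (2)$\Leftrightarrow$(3) it is exactly the paper's Schur-complement argument.

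The genuine difference lies in the link with ampleness. The paper does not prove (1)$\Leftrightarrow$(2); it quotes Kobayashi's Theorem~5.1. You prove it yourself. You identify squared Finsler metrics on $E^*$ with Hermitian metrics on $\mathcal O_{\mathbb P(E^*)}(-1)$, so that $\mathcal L_{\log G}$ is, up to a positive constant, the pullback of the curvature of the induced metric on $\mathcal O_{\mathbb P(E^*)}(1)$. You then trade that curvature against $\mathcal L_G$ through $\mathcal L_{\log G}=G^{-1}\mathcal L_G-G^{-2}|\partial G|^2$, and Kodaira's embedding theorem closes the cycle. This is essentially the mechanism behind the cited result. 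Your version buys a self-contained proof and a direct route (1)$\Rightarrow$(3); the paper's buys brevity.

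Your ``main obstacle'' in (2)$\Rightarrow$(1) is lighter than you expect. The Euler identity $\mathcal L_G(a,\bar R)=\partial G(a)$ holds for every $a\in T^{1,0}_vE^*$, not only for vertical $a$. So once $\mathcal L_G>0$, Cauchy--Schwarz gives $|\partial G(a)|^2\le G\,\mathcal L_G(a,\bar a)$ on the whole tangent space, with equality only on $\mathbb C R$. No block decomposition is needed, and this yields (3)$\Rightarrow$(1) directly.

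In (1)$\Rightarrow$(3), note that the conclusion $\mathcal L_G>0$ rests on the two semipositive summands having kernels exactly $\mathbb C R$ and $\ker\partial G$, which meet only in $0$. Positivity on complementary subspaces alone would not suffice. You did state both kernels, so the step stands.
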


Kobayashi proves the equivalence of the first two conditions.
The equivalence of the last two conditions follows directly from
\eqref{eq:conv-kobayashi}: with a positive definite vertical
block, the full Levi matrix is positive definite exactly when
its horizontal Schur complement is positive definite, which
is equivalent to negative Kobayashi curvature. The characterization asserts the existence of a strongly
pseudoconvex metric; it does not assert that this metric is
real convex or satisfies the triangle inequality.

\subsection{Convexity of complements and other notation}

A complex manifold of dimension $N$ is \emph{$q$-convex}
if it admits a smooth exhaustion $\psi$ whose Levi form has
at least $N-q+1$ positive eigenvalues outside a compact set.
Here an exhaustion is a function for which each sublevel set
$\{\psi\le c\}$ is compact. Throughout, $q$-convexity
refers to smooth exhaustions outside a compact set; it is not
the stronger requirement of $q$-completeness, which imposes
the same Levi condition everywhere. The least smooth convexity
index is the smallest integer $q$ for which such a smooth
exhaustion exists.
This is the analytic notion used in the
Hartshorne--Schneider conjecture
\cite[\S4]{Demailly99}.

For an ample divisor $P$ on a smooth projective $n$-fold and
a torsion-free coherent sheaf $\mathcal F$ of positive rank,
we write
$\mu_P(\mathcal F)=c_1(\mathcal F)\cdot P^{n-1}/\rk\mathcal F$.
The sheaf is \emph{$\mu_P$-stable} if every coherent subsheaf
$0\ne\mathcal F'\subset\mathcal F$ of smaller positive rank
satisfies $\mu_P(\mathcal F')<\mu_P(\mathcal F)$
\cite[Definitions~1.2.11--1.2.12]{HuybrechtsLehn}.
On a surface, the degree of a line bundle $L$ is
$\deg_P L=c_1(L)\cdot P$, and $c_2$ is identified with its
degree. We write $\mathcal F(D)=\mathcal F\otimes\mathcal O_X(D)$.

All $C^m$ norms on a fixed compact smooth manifold or bundle
are measured using fixed smooth background metrics and
connections. For a smooth tensor $A$, one may take
$\|A\|_{C^m}=\sum_{j=0}^m\sup|\nabla^j A|$; different
background choices give equivalent norms.
When the base is compact, the $C^m$ norms of homogeneous
functions are taken on a fixed compact unit sphere bundle.

\section{A uniform obstruction for convex plurisubharmonic norms}
\label{sec:analytic}

This section establishes the analytic obstruction used in our
construction. For a direct sum of line bundles satisfying suitable
positive-degree conditions, we derive an integral inequality that
excludes smooth complex norms with plurisubharmonic square.
The estimate is uniform over all candidate norms and persists
under small deformations of the bundle's holomorphic structure
on the fixed base. We then use duality and regularization to
show that such an obstruction on the dual bundle $E^*$ rules out smooth strongly
pseudoconvex Finsler metrics with semipositive Kobayashi curvature
on an ample bundle $E$.

\subsection{Positive test forms and curvature normalization}

Let $X$ be a connected compact complex manifold of dimension
$n\ge2$. A real $(n-1,n-1)$-form $\Theta$ is \emph{strictly positive}
if, pointwise, its wedge product with every nonzero semipositive
$(1,1)$-form is a positive top-degree form. Assume $dd^c\Theta=0$. For a
holomorphic line bundle $F$, set
\begin{equation}\label{eq:an-degree}
 \deg_\Theta F=\int_XR_k\wedge\Theta.
\end{equation}
This is independent of $k$, since integration by parts gives
$\int_Xdd^cu\wedge\Theta=\int_Xu\,dd^c\Theta=0$.
It is the pairing of the Bott--Chern first Chern class with the
Aeppli class of $\Theta$. It need not be a de Rham intersection
number, since $\Theta$ need not be $d$-closed.
A Gauduchon metric $\omega$ gives such a form
$\Theta=\omega^{n-1}$.

\begin{lemma}\label{lem:an-normalize}
Fix a smooth positive volume form $\mu$ on $X$. For every
Hermitian line metric $k^0$, there is a smooth real function $u$
such that $k=k^0e^{-u}$ satisfies
\begin{equation}\label{eq:an-normalize}
 R_k\wedge\Theta
 =\frac{\deg_\Theta F}{\int_X\mu}\,\mu.
\end{equation}
\end{lemma}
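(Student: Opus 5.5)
Changing the metric to $k=k^0e^{-u}$ changes the curvature by an exact term: since $R_k=-dd^c\log k$, we have $R_k=R_{k^0}+dd^cu$. The normalization \eqref{eq:an-normalize} is therefore the linear second-order equation
\[
 dd^cu\wedge\Theta=f\mu,\qquad
 f:=\frac{\deg_\Theta F}{\int_X\mu}-\frac{R_{k^0}\wedge\Theta}{\mu},
\]
where $f$ is a smooth real function. Integrating against $\mu$ gives
\[
 \int_X f\mu=\deg_\Theta F-\int_X R_{k^0}\wedge\Theta=0
\]
by \eqref{eq:an-degree}. The plan is to solve this equation by Fredholm theory for the operator $Lu:=(dd^cu\wedge\Theta)/\mu$.

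First, $L$ is a real second-order elliptic operator without zeroth-order term. In local coordinates $dd^cu\wedge\Theta=\sum a^{\alpha\bar\beta}u_{\alpha\bar\beta}\,\mu$, and strict positivity of $\Theta$ says that $(a^{\alpha\bar\beta})$ is a positive definite Hermitian matrix: wedging $\Theta$ with the semipositive form $i\xi\wedge\bar\xi$ gives a positive top form for $\xi\ne0$. So the principal symbol is a positive definite form in the real covector, and $L$ is elliptic with smooth real coefficients.

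Second, the kernel of $L$ consists of the constants. If $Lu=0$, then at a maximum point of $u$ the strong maximum principle (Hopf) applies, since $L$ has no zeroth-order term. Because $X$ is compact and connected, $u$ is constant.

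Third, identify the cokernel. The formal $L^2(\mu)$-adjoint satisfies
\[
 \int_X (Lu)\,v\,\mu=\int_X u\,dd^c(v\Theta).
\]
So $L^*v=dd^c(v\Theta)/\mu$, which is elliptic with the same principal symbol, and $L^*1=dd^c\Theta/\mu=0$ by the hypothesis $dd^c\Theta=0$. Since $X$ is compact and $L$ is elliptic, $L$ is Fredholm of index zero, because its index is that of a self-adjoint operator with the same symbol. Hence $\dim\ker L^*=\dim\ker L=1$, and $\ker L^*$ is spanned by the constants.

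The range of $L$ in $C^\infty$ is then the $L^2(\mu)$-orthogonal complement of $\ker L^*$, that is, $\{f:\int_X f\mu=0\}$. Our $f$ lies in this space, so there is a solution $u$, and elliptic regularity makes $u$ smooth. Setting $k=k^0e^{-u}$ then gives \eqref{eq:an-normalize}.

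The main point needing care is the identification of $\ker L^*$ with the constants. The index-zero argument settles it, and it is exactly where the hypothesis $dd^c\Theta=0$ enters. Alternatively, one can argue directly: any $v\in\ker L^*$ satisfies a non-divergence-form equation whose maximum principle, together with $L^*1=0$, forces $v$ to be constant.
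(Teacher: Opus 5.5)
Your proposal is correct and follows essentially the paper's proof. You use the same operator $Lu=dd^cu\wedge\Theta/\mu$, show its kernel is the constants by the strong maximum principle, and show it has index zero. The cokernel is then cut out by the single condition $\int_X f\mu=0$. Your identity $L^*1=dd^c\Theta/\mu=0$ is exactly the paper's observation $\int_X Lu\,\mu=0$. The only other difference is that the paper gets index zero by deforming to a Laplacian, whereas you use the self-adjoint part $\tfrac12(L+L^*)$.
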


\begin{proof}
Define the real scalar operator
$\mathcal Lu={dd^cu\wedge\Theta}/{\mu}.$
Strict positivity of $\Theta$ makes this a uniformly elliptic
second-order operator. It has no zero-order term; its kernel
consists of constants by the strong maximum principle and
connectedness of $X$. Acting from $C^{k+2,\alpha}(X)$ to
$C^{k,\alpha}(X)$, for $0<\alpha<1$, it is Fredholm of index
zero. Indeed, interpolate it with a scalar Laplacian having the
same sign convention. The principal symbols remain elliptic
throughout this interpolation, so the index is the index of the
Laplacian. Its cokernel is therefore one-dimensional. On the
other hand,
$\int_X\mathcal Lu\,\mu =\int_Xdd^cu\wedge\Theta=0.$
Thus its image consists exactly of the functions with $\mu$-mean zero.
The right-hand side of
$$dd^cu\wedge\Theta =\frac{\deg_\Theta F}{\int_X\mu}\,\mu -R_{k^0}\wedge\Theta$$
has integral zero. Solve this equation and use elliptic regularity
to obtain a smooth real $u$. Since $R_{k^0e^{-u}}=R_{k^0}+dd^cu$,
this proves \eqref{eq:an-normalize}.
\end{proof}

The same argument applies to a real closed $(1,1)$-form
$\theta$: one can add $dd^cu$ so that
$(\theta+dd^cu)\wedge\Theta$ is a constant multiple of $\mu$.

\subsection{Direct sums of line bundles}

\begin{theorem}\label{thm:an-lines}
Let $F_0=\bigoplus_{s=1}^rF_s$ be a direct sum of holomorphic
line bundles on $X$, where $r\ge2$. Suppose that there are smooth
strictly positive real $(n-1,n-1)$-forms $\Theta_s$ such that
\begin{equation}\label{eq:an-hypothesis}
 dd^c\Theta_s=0,
 \qquad \deg_{\Theta_s}F_s>0
 \qquad (1\le s\le r).
\end{equation}
There is a $C^4$ neighbourhood of the split Dolbeault operator
on the underlying smooth complex vector bundle in which no
integrable operator admits a smooth complex norm with
plurisubharmonic square away from the zero section.
\end{theorem}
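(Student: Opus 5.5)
We argue by contradiction and derive, for each summand $F_s$, an integral inequality that is uniform over all candidate norms.

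\emph{Setup.} Apply Lemma~\ref{lem:an-normalize} to each $F_s$ with test form $\Theta_s$. This gives Hermitian line metrics $k_s$ with
\[
R_{k_s}\wedge\Theta_s=c_s\mu,\qquad c_s=\frac{\deg_{\Theta_s}F_s}{\int_X\mu}>0 .
\]
Let $\bar\partial_0$ be the split operator. Let $\bar\partial_E=\bar\partial_0+\beta$ be an integrable perturbation with $\|\beta\|_{C^4}<\varepsilon$, and suppose $p$ is a smooth complex norm for $\bar\partial_E$ with $G=p^2$ plurisubharmonic on $E^\circ$.

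\emph{Normalization.} For each $s$ consider the restriction of $G$ to the line axis $F_s\subset E$, which is a Hermitian metric $g_s=G|_{F_s}$ on the line bundle $F_s$. By the triangle inequality and homogeneity, on the ``cone'' near the $s$-th axis we have
\[
G(v)\le\Big(\sum_t g_t(v_t)^{1/2}\Big)^2 ,
\]
with equality along the axis.

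\emph{Key step: horizontal Levi forms at the axes.} Fix $s$ and a point $v=v_s\ne0$ on the $s$-th axis. Use local holomorphic frames of $\bar\partial_E$ that agree with the split frames to first order, corrected by the $C^4$-small $\beta$. Plurisubharmonicity of $G$ in directions tangent to a holomorphic lift of the axis gives
\[
dd^c\log g_s\ \ge\ -(\text{error from }\beta)\ +\ (\text{terms from the off-axis fibre dependence of }G).
\]
The danger is that $G$ is nonquadratic in the fibre, so these extra terms involve fibre Hessians of $G$ that need not be bounded uniformly over candidates.

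The plan to handle this is to average over a small neighbourhood of the axis rather than evaluate on it. Integrate the Levi form of $G$ against the fibre variables $v_t$, $t\ne s$, over a small disc, weighted by a cutoff. Then use integration by parts in the fibre directions, moving derivatives onto the cutoff, so that only $G$ itself appears, not its Hessian. Finally use the triangle-inequality upper bound above to control $G$ off the axis by the $g_t$.

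Wedging with $\Theta_s$ and integrating over $X$, the $dd^c$ of $\log g_s$ relative to $\log k_s$ integrates to zero because $dd^c\Theta_s=0$. The expected result is
\[
0\le -\int_X R_{k_s}\wedge\Theta_s + C\varepsilon = -c_s\int_X\mu+C\varepsilon ,
\]
with $C$ independent of $G$. The factor $C\varepsilon$ comes from $\beta$ appearing through second fibre-coordinate changes; this is why $C^4$ control is needed after two integrations by parts. The sign here reflects that a plurisubharmonic square on the dual side forces nonpositive degree, so positive degree of $F_s$ contradicts it.

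Choosing $\varepsilon<\min_s c_s\int_X\mu/C$ gives the contradiction for every candidate simultaneously. One summand would suffice to conclude, but using all $r$ summands, each with its own $\Theta_s$, keeps the hypothesis \eqref{eq:an-hypothesis} symmetric.

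\emph{Main obstacle.} The hard part is the uniformity step: showing that the averaged horizontal Levi form near an axis differs from $dd^c\log g_s$ by an error controlled only by $\|\beta\|_{C^4}$ and the comparison constants from the triangle inequality. In particular, the error must not depend on the fibre Hessian of $G$, which can degenerate across candidates. I would first check the split case $\beta=0$, where holomorphic $S^1$-rotations of each summand act. There one can average $G$ over the torus $T^r$, which preserves plurisubharmonicity and the norm property, and reduce to a torus-invariant norm. That reduction makes the axis computation explicit. Then I would treat $\beta$ perturbatively via the integration-by-parts bound.
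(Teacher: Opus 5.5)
Your outline has a genuine gap, and it sits exactly at your assertion that one summand would suffice. That assertion is false, so no single-axis inequality $0\le-c_s\int_X\mu+C\varepsilon$ with $C$ independent of $G$ can be proved.

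\textbf{A counterexample to the one-summand version.} On $\mathbb P^1$, take nonsplit extensions $0\to\mathcal O(-3)\to\mathcal F_\tau\to\mathcal O(1)\to0$ with class $\tau\xi$, where $\xi\in H^1(\mathbb P^1,\mathcal O(-4))$ is generic. Then $\mathcal F_\tau\simeq\mathcal O(-1)^{\oplus2}$ for $\tau\neq0$, and the Dolbeault operators $\bar\partial_0+\tau\beta$ converge to the split one in every $C^m$ norm. Pull everything back to $X=\mathbb P^1\times\mathbb P^1$. The summand $\mathcal O(1,0)$ has positive degree for every K\"ahler form $\Theta_1$. Yet each $\mathcal F_\tau$ with $\tau\neq0$ carries a Griffiths-seminegative Hermitian metric $h_\tau$, whose squared norm is plurisubharmonic on the total space and is the square of a complex norm. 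The theorem is not contradicted, because the other summand $\mathcal O(-3,0)$ has negative degree for every strictly positive $\Theta$.

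\textbf{Why a single axis cannot work.} Near the $s$-th axis, the triangle inequality controls off-axis quantities only through the values of $G$ on the other axes. On the support of the cutoff, for $j\neq s$,
$|d_vG(P_jv)|\le2p(v)\,p(P_jv)\le2\varepsilon\,p(v)\sqrt{G(e_j)}$,
where $\varepsilon$ is the averaging width (not the size of $\beta$). This width must be fixed before the neighbourhood of $\bar\partial_0$ is chosen, since derivatives of the cutoff enter the perturbation constants. Meanwhile the ratios $G(e_j)/G(e_s)$ are unbounded over candidate norms. Passing to $\log g_s$ amounts to normalizing by $g_s$, so after that normalization the error is not uniformly small.

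\textbf{The missing idea: stay linear in $G$ and sum over all $s$.} The paper uses densities $\rho_s$ concentrated near each axis, each paired with its own $\Theta_s$, and sets $N=\sum_jG(e_j)$.
\begin{itemize}
\item The exact split identity $\beta_s(G)=dd^cU_s-\frac12\sum_jR_jV_{sj}$ involves only the first fibre derivatives $D_jG$. So no fibre integration by parts is needed there, and phase-invariant densities replace your torus averaging.
\item With the triangle-inequality bounds this gives $I_0(G)\le-(\lambda_{\min}-\varepsilon C)\int_XN\mu$.
\item The main term satisfies $-\sum_s\lambda_s\int_XG(e_s)\mu\le-\lambda_{\min}\int_XN\mu$. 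It absorbs all errors of size $\varepsilon\int_XN\mu$ only because \emph{every} $\deg_{\Theta_s}F_s$ is positive. The full hypothesis is therefore essential, not a matter of symmetry.
\end{itemize}

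\textbf{Linearity also drives the persistence step.} After deformation there is no holomorphic lift of an axis. The paper instead uses the $J_A$-complex-linear horizontal lift of the Chern connection of $(k,\bar\partial_0+A)$. This lift is tangent to $\Sigma$, so plurisubharmonicity still gives $I_A(G)\ge0$. Writing $I_A(G)=\int_\Sigma(\mathcal P_Ag)\,d\nu=\int_\Sigma g\,b_A\,d\nu$ with $b_A=\mathcal P_A^*1$ moves all base and fibre derivatives onto fixed coefficients. This yields $|I_A(G)-I_0(G)|\le\|b_A-b_0\|_{C^0}\int_XN\mu$, with no derivative bounds on $G$. For $\log g_s$ this adjoint argument is blocked by the squared-gradient term $\partial\log G\wedge\bar\partial\log G$.
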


Here a neighbouring operator is written
$\bar\partial_A=\bar\partial_0+A$, with
$A\in\Omega^{0,1}(X,\operatorname{End}F_0)$ and
$\bar\partial_A^2=0$. Its $C^4$ size is measured using any fixed
smooth metrics and connections. On the compact base all such
choices give equivalent norms.

\subsubsection*{The fixed geometric data.}
Fix a smooth positive volume form $\mu$. By
Lemma~\ref{lem:an-normalize}, choose metrics $k_s$ on $F_s$ with
curvatures satisfying
\begin{equation}\label{eq:an-curvatures}
 R_s\wedge\Theta_s=\lambda_s\mu,
 \qquad
 \lambda_s=\frac{\deg_{\Theta_s}F_s}{\int_X\mu}>0.
\end{equation}
Let $k=\bigoplus_sk_s$, let $P_s$ be the $k$-orthogonal
projections, and let
$\Sigma=\{v\in F_0:|v|_k=1\}.$
We use the round measure $d\sigma_x$ of mass one on each fibre
sphere. Together with $\mu$ it defines a fixed smooth positive
density $d\nu=d\sigma_x\,\mu(x)$ on the compact manifold
$\Sigma$.

Choose a nonnegative smooth function $\chi$ supported in
$(-1,1)$ and positive near zero. For $0<\varepsilon<1/2$, we define the following function on $\Sigma$:
\begin{equation}\label{eq:an-densities}
 \rho_s(v)=c_\varepsilon\,
 \chi\left(\tfrac{|(I-P_s)v|_k^2}{\varepsilon^2}\right),
 \qquad
 \int_{\Sigma_x}\rho_s\,d\sigma_x=1.
\end{equation}

In a local $k$-unitary frame $e_1,\ldots,e_r$ respecting the splitting $F_0=\bigoplus_jF_j$, write $v=\sum_jz_je_j$. Each fibre sphere is then identified with the standard sphere $S^{2r-1}$, and
$\rho_s(z) =c_\varepsilon\, \chi({\sum_{j\ne s}|z_j|^2}/{\varepsilon^2}).$
This coordinate expression does not depend on the base point $x$. Moreover, permuting the coordinates preserves the spherical measure and exchanges the expressions for different indices $s$. Hence the same constant $c_\varepsilon$, independent of both $x$ and $s$, normalizes each $\rho_s$ to have integral one.

The functions $\rho_s$ are globally defined, smooth, and nonnegative. They are invariant under independent phase rotations $z_j\mapsto e^{i\theta_j}z_j$, so their expressions do not depend on the choice of unitary frame in each line summand. Thus $\rho_s\,d\sigma_x$ defines a probability density on each fibre sphere.

The support of $\rho_s$ is contained in
$\{v\in\Sigma: |(I-P_s)v|_k<\varepsilon\},$
so it is concentrated near the unit circle of the summand $F_s$. We will first choose $\varepsilon$ sufficiently small for the estimate below, and then keep $\varepsilon$ and the resulting weights fixed while deforming the Dolbeault operator.

\subsubsection*{Estimates supplied by convexity.}
For a smooth complex norm $p$, let $G=p^2$ and define
$$a_s(x)=G(e_s),\qquad N(x)=\sum_{s=1}^ra_s(x),$$
where $e_s$ is any $k$-unit vector in $(F_s)_x$. Complex homogeneity makes $a_s$ independent of the choice of phase. The triangle
inequality and Cauchy--Schwarz give
\begin{equation}\label{eq:an-C1}
 G(v)\le N|v|_k^2,
 \qquad |d_vG(w)|\le2N|v|_k|w|_k
\end{equation}
for $v,w\in(F_0)_x$, with $v\ne0$ in the differential estimate.
Indeed, $p(v)\le\sum_s|v_s|\sqrt{a_s}\le\sqrt N|v|_k$,
and differentiating $|p(v+tw)-p(v)|\le|t|p(w)$ for real $t$
gives $|d_vp(w)|\le p(w)$.

For each $j$, define
$D_jG(v)=(d_vG)(P_jv) =\left.\frac{d}{dt}\right|_{t=0}G(v+tP_jv).$
This measures the change in $G$ when only the $j$-th component of $v$ is rescaled. We claim that, on the support of $\rho_s$,
\begin{equation}\label{eq:an-axis}
|D_jG|\le2\varepsilon N\quad(j\ne s),
\qquad
|D_sG-2a_s|\le10\varepsilon N.
\end{equation}

Fix $x$ and $v\in\Sigma_x\cap\operatorname{supp}\rho_s$, and set $w=(I-P_s)v$. Then $|v|_k=1$ and $|w|_k<\varepsilon$. For $j\ne s$, orthogonality gives $|P_jv|_k\le|w|_k$, so the derivative estimate yields
$|D_jG(v)| \le2N|v|_k|P_jv|_k \le2\varepsilon N.$

To estimate $D_sG$, let
$\ell=|P_sv|_k,$ $e_s=\frac{P_sv}{\ell}.$
Here $\ell>0$, since $\ell^2=1-|w|_k^2$. Moreover, $G(e_s)=a_s$, and the reverse triangle inequality gives $1-\ell\le|w|_k$. Consequently,
$|v-e_s|_k \le|w|_k+(1-\ell) <2\varepsilon.$
The segment $\gamma(t)=(1-t)e_s+tv$ lies in the closed unit ball. It also avoids zero, since every point on it is within distance $2\varepsilon<1$ of the unit vector $e_s$. Integrating the derivative estimate along this segment therefore gives
$$|G(v)-a_s| \le\int_0^1 2N|\gamma(t)|_k|v-e_s|_k\,dt \le4\varepsilon N.$$

Finally, linearity of the differential gives
$|\sum_{j\ne s}D_jG| =|(d_vG)(w)| \le2\varepsilon N.$
Since $G$ is homogeneous of degree two, Euler's identity implies
$D_sG-2a_s =2(G(v)-a_s)-\sum_{j\ne s}D_jG.$
Hence
$$|D_sG-2a_s| \le2(4\varepsilon N)+2\varepsilon N =10\varepsilon N,$$
as claimed.

\subsubsection*{The averaged horizontal Levi identity.}

Let $\pi:F_0\to X$ be the bundle projection, and let $D_0$ be the Chern connection of the split Hermitian metric $k$. For $v\in\Sigma_x$ and a real tangent vector $\xi\in T_xX$, choose a base curve $\gamma(t)$ with $\gamma(0)=x$ and $\dot\gamma(0)=\xi$, and let $v(t)$ be the $D_0$-parallel transport of $v$ along $\gamma$. The real horizontal lift is defined by
$$H_{0,v}(\xi)=\dot v(0), \qquad d\pi_v\bigl(H_{0,v}(\xi)\bigr)=\xi.$$

This lift is compatible with the complex structures on the base and the total space:
$H_{0,v}(J_X\xi)=J_{F_0}H_{0,v}(\xi).$
Indeed, in a local holomorphic frame, the connection matrix of $D_0$ has type $(1,0)$, which makes its horizontal subspaces invariant under $J_{F_0}$.

Moreover, $D_0$-parallel transport preserves the $k$-norm. Since $v$ has unit norm, the lifted curve $v(t)$ remains in the unit sphere bundle $\Sigma$. Its velocity therefore satisfies
$H_{0,v}(\xi)\in T_v\Sigma.$
Thus $H_{0,v}$ is a complex-linear map into $T_vF_0$ whose image is tangent to $\Sigma$.

 Define
$\beta_s(G)_x= \int_{\Sigma_x}\rho_s(v)H_{0,v}^*(dd^cG)_v\,d\sigma_x(v),$
which is a real $(1,1)$-form on $X$. Equivalently, for any $\xi,\eta\in T_xX$,
\[
\beta_s(G)_x(\xi, \eta)=\int_{\Sigma_x} \rho_s(v)\left(d d^c G\right)_v\left(H_{0, v} \xi, H_{0, v} \eta\right) d \sigma_x(v).
\]
Put
$U_s=\int_{\Sigma_x}G\rho_s\,d\sigma_x,$ $V_{sj}=\int_{\Sigma_x}D_jG\rho_s\,d\sigma_x.$
For every smooth real function $G$ on $F_0\setminus\{0\}$,
\begin{equation}\label{eq:an-average}
 \beta_s(G)=dd^cU_s-\frac12\sum_jR_jV_{sj}.
\end{equation}

Fix a point $x_0\in X$. Choose local holomorphic coordinates $z$ near $x_0$ and holomorphic frames $e_j$ of the line summands such that
$k_j(e_j,e_j)=e^{-\varphi_j},$ $d\varphi_j(x_0)=0.$
In these frames the Chern connection forms are $-\partial\varphi_j$, so they vanish at $x_0$. Consequently, the horizontal lifts at $x_0$ have no vertical component, and the coefficients of $H_{0,v}^*(dd^cG)$ are determined by the base derivatives $G_{\alpha\bar\beta}$.

Introduce the smooth unitary fibre coordinates
$\zeta_j=e^{-\varphi_j(z)/2}v_j,$
and write $G(z,v)=g(z,\zeta)$. These coordinates identify every fibre sphere with the fixed standard sphere $\sum_j|\zeta_j|^2=1$. In what follows, derivatives of $G$ with respect to $z$ are taken with $v$ fixed, whereas derivatives of $g$ are taken with $\zeta$ fixed.
At $x_0$, the first base derivatives of $\zeta_j$ and $\bar\zeta_j$ vanish, while
$\partial_\alpha\partial_{\bar\beta}\zeta_j =-\frac12(\varphi_j)_{\alpha\bar\beta}\zeta_j,$ $\partial_\alpha\partial_{\bar\beta}\bar\zeta_j =-\frac12(\varphi_j)_{\alpha\bar\beta}\bar\zeta_j.$
Thus all terms in the mixed chain rule involving first derivatives of the coordinate change vanish at $x_0$, leaving
$$G_{\alpha\bar\beta} = g_{\alpha\bar\beta} -\frac12\sum_j(\varphi_j)_{\alpha\bar\beta} ( \zeta_jg_{\zeta_j} +\bar\zeta_jg_{\bar\zeta_j} ).$$
The expression in parentheses is $D_j G=\left.\frac{d}{d t}\right|_{t=0} G\left(v+t P_j v\right)$, since rescaling the $j$-th component of $v$ also rescales the $j$-th component of $\zeta$.

In unitary coordinates, both the standard spherical measure $d\sigma$ and the weight $\rho_s(\zeta)$ are independent of $z$. Hence
$U_s(z)=\int_{S^{2r-1}}g(z,\zeta)\rho_s(\zeta)\,d\sigma(\zeta),$
and differentiation under the integral gives
$\int_{S^{2r-1}}g_{\alpha\bar\beta}\rho_s\,d\sigma =(U_s)_{\alpha\bar\beta}.$
Averaging the chain-rule identity therefore yields
$$\int_{\Sigma_{x_0}}G_{\alpha\bar\beta}\rho_s\,d\sigma_{x_0} = (U_s)_{\alpha\bar\beta} -\frac12\sum_j(\varphi_j)_{\alpha\bar\beta}V_{sj}$$
at $x_0$. Using $dd^c=\frac{i}{2\pi}\partial\bar\partial$ and $R_j=dd^c\varphi_j$, we obtain
\eqref{eq:an-average}.
Since $x_0$ was arbitrary, the identity holds on all of $X$.

\subsubsection*{A strict inequality uniform over all norms.}
Set
$I_0(G)=\sum_s\int_X\beta_s(G)\wedge\Theta_s.$
It is nonnegative when $G$ is plurisubharmonic. However,
$dd^c\Theta_s=0$ and integration by parts give
$\int_Xdd^cU_s\wedge\Theta_s=0$. Define
\[
 M_{j,s}=\sup_X\left|
       \frac{R_j\wedge\Theta_s}{\mu}\right|,
 \quad
 C=5\sum_s\lambda_s+\sum_s\sum_{j\ne s}M_{j,s},
 \quad \lambda_{\min}=\min_s\lambda_s.
\]
Using \eqref{eq:an-curvatures}, \eqref{eq:an-axis}, and
\eqref{eq:an-average}, we obtain
\begin{align}
 I_0(G)
 &=-\frac12\sum_{s,j}\int_XV_{sj}R_j\wedge\Theta_s\notag\\
 &\le-\int_X\sum_s\lambda_sa_s\,\mu
          +\varepsilon C\int_XN\,\mu\notag\\
 &\le-(\lambda_{\min}-\varepsilon C)\int_XN\,\mu.
 \label{eq:an-margin}
\end{align}
Fix $\varepsilon$ such that $\varepsilon C<\lambda_{\min}/2$.
This gives a negative bound independent of all derivatives of
$G$ in base directions and of its second derivatives in fibre
directions.

\subsection{Persistence via a differential operator on the sphere}
\label{subsec:an-persistence}

We now prove that the strict inequality persists under small
changes in the bundle's Dolbeault operator on the fixed complex
base $X$.
For the fixed metric $k$, the Chern connection associated with
$\bar\partial_A=\bar\partial_0+A$ is
$D_A=D_0+A-A^{*k}.$
Let $J_A$ be the resulting total-space complex structure and
$H_{A,v}$ the real horizontal lift. These depend smoothly and
algebraically on $A$ in fixed smooth coordinates. The lift is
complex linear and preserves the unit sphere bundle.

For an arbitrary real function $g\in C^\infty(\Sigma)$, define its
radial extension
$$(\mathcal Eg)(v)=|v|_k^2g\left(v/|v|_k\right),\qquad v\ne0.$$
This extension is independent of the holomorphic structure. For
any degree-two homogeneous function $G$, it gives
$\mathcal E(G|_\Sigma)=G$. Define a global real differential
operator on $\Sigma$ by
\begin{equation}\label{eq:an-operator}
 (\mathcal P_Ag)(v)=\sum_s\rho_s(v)
 \frac{H_{A,v}^*(dd^c_{J_A}\mathcal Eg)_v
                  \wedge\Theta_s(x)}{\mu(x)},
 \qquad x=\pi(v).
\end{equation}
It has order at most two. Indeed, differentiating $\mathcal Eg$
twice and restricting to the unit sphere involves at most the
second derivatives of $g$ there. The real identity
$$d^c_Jf=-\frac1{4\pi}\,df\circ J, \qquad dd^c_Jf=-\frac1{4\pi}\,d(df\circ J)$$
shows that the coefficients of \eqref{eq:an-operator} depend
on $A$ and its first derivatives, together with fixed smooth
data. Consequently they converge in $C^2$ when $A\to0$ in
$C^4$.

Take the formal adjoint with respect to $d\nu$ and set $b_A=\mathcal P_A^*1$.
If, in a coordinate chart, $d\nu=m(y)\,dy$ and
$\mathcal P_A=\sum_{|\gamma|\le2}c_{\gamma,A}\partial^\gamma$,
then
\begin{equation}\label{eq:an-adjoint}
 b_A=m^{-1}\sum_{|\gamma|\le2}(-1)^{|\gamma|}
                    \partial^\gamma(mc_{\gamma,A}).
\end{equation}
This gives a globally defined smooth function, and
\begin{equation}\label{eq:an-adjoint-continuity}
 \|b_A-b_0\|_{C^0(\Sigma)}\longrightarrow0
 \quad\text{as }\|A\|_{C^4}\longrightarrow0.
\end{equation}
For a squared norm, write $g=G|_\Sigma$. The corresponding
horizontal Levi functional is
$I_A(G)=\int_\Sigma\mathcal P_Ag\,d\nu =\int_\Sigma g b_A\,d\nu.$
Since $0<g(v)\le N(\pi(v))$,
\begin{equation}\label{eq:an-uniform-perturbation}
 |I_A(G)-I_0(G)|
 \le\|b_A-b_0\|_{C^0(\Sigma)}\int_XN\,\mu.
\end{equation}
This estimate requires no bounds on derivatives of the candidate
norm.

\begin{proof}[Proof of Theorem~\ref{thm:an-lines}]
Choose the fixed data and $\varepsilon$ as above. By
\eqref{eq:an-adjoint-continuity}, for all sufficiently small
$A$ the coefficient on the right of
\eqref{eq:an-uniform-perturbation} is less than
$\lambda_{\min}/4$. Combining this with
\eqref{eq:an-margin} gives
$$I_A(G)\le-\frac{\lambda_{\min}}4\int_XN\,\mu<0$$
for every smooth squared complex norm. If $G$ were
plurisubharmonic for $J_A$, the pullbacks
$H_{A,v}^*dd^c_{J_A}G$ would be semipositive $(1,1)$-forms.
Their wedge products with the positive $\Theta_s$, averaged against
$\rho_s\ge0$, would give $I_A(G)\ge0$. This contradiction
proves the theorem.
\end{proof}

\begin{remark}\label{rem:an-order}
The angular width is fixed before the neighbourhood of the
Dolbeault operator is chosen. Derivatives of the angular density
may grow as its width decreases, but they are fixed smooth
coefficients in \eqref{eq:an-adjoint}. Neither this proof nor the
strict estimate requires uniform bounds on fibre Hessians of
norms. If all testing forms were equal, all summands of $F_0$
would have positive degree for that one form, and $F_0^*$ could
not have ample determinant. The use of different testing forms
is therefore essential in the applications below.
\end{remark}

\subsection{Passage to curvature obstructions}

We show that, on an ample bundle, semipositive Kobayashi
curvature produces a smooth dual norm with plurisubharmonic square.
We first isolate the regularization step, which will also be
used for support norms arising from concave neighbourhoods.

\begin{lemma}\label{lem:an-regularization}
Let $E$ be ample over a compact complex manifold. Suppose that
$E^*$ admits a continuous squared complex norm $Q$ which is
plurisubharmonic away from zero and such that
$Q-c|\xi|_k^2$ is fibrewise real convex for some Hermitian
metric $k$ on $E^*$ and some $c>0$. Then $E^*$ admits a
smooth strongly real convex complex norm whose square is
strictly plurisubharmonic away from zero.
\end{lemma}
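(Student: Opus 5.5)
The plan has three stages: upgrade $Q$ to a strictly plurisubharmonic continuous norm using ampleness, regularize it by \cite[\S2.3, Lemma~13]{WuIII}, and then check that the result is still a complex norm.

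\emph{Step 1: make $Q$ strict.} Since $E$ is ample, Theorem~\ref{thm:prelim-kobayashi} gives a smooth squared Finsler metric $G_1$ on $E^*$ that is strictly plurisubharmonic on $(E^*)^\circ$. It need not be real convex, so $Q+G_1$ is not directly admissible. Instead I would exploit the strict real convexity of $Q$ and use the perturbation
\[
Q_\delta=Q+\delta G_1 .
\]
On the compact unit sphere bundle of $k$, the real fibre Hessian of $G_1$ is bounded below by $-C|\xi|_k^2$ in the homogeneous sense. Hence $Q_\delta-(c-C\delta)|\xi|_k^2$ is fibrewise real convex, and for small $\delta>0$ the function $Q_\delta$ is strongly real convex with constant $c/2$. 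It is also strongly plurisubharmonic: $Q$ is plurisubharmonic, and $\delta G_1$ is strictly plurisubharmonic and homogeneous, so locally $\delta G_1-c'(|z|^2+|\xi|^2)$ is plurisubharmonic near every point of $(E^*)^\circ$ by compactness of the sphere bundle. Finally $Q_\delta$ is continuous, positive away from zero and $|\lambda|^2$-homogeneous. Convexity of $Q_\delta$ makes its unit ball convex, so $\sqrt{Q_\delta}$ is a continuous complex norm by the equivalences in Section~\ref{sec:preliminaries}.

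\emph{Step 2: regularize.} Apply the homogeneous regularization \cite[\S2.3, Lemma~13]{WuIII} to $Q_\delta$. This lemma takes a continuous homogeneous function that is both strongly real convex and strongly plurisubharmonic in the quantitative sense fixed in Section~\ref{sec:preliminaries}. It returns smooth homogeneous approximants $\tilde Q$, for instance by local convolution in $(z,\xi)$ patched with a partition of unity and then rehomogenized. These approximants keep both strict properties with slightly smaller constants. The smooth function $\tilde Q$ is then positive on $(E^*)^\circ$, has positive definite real fibre Hessian and is strictly plurisubharmonic there. Its square root is therefore a smooth strongly real convex complex norm whose square is strictly plurisubharmonic away from zero, and in particular it is strongly pseudoconvex.

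\emph{Main obstacle.} The delicate point is Step 2: the regularization must preserve strict plurisubharmonicity and real convexity simultaneously, together with exact homogeneity. Gluing local convolutions introduces error terms from derivatives of the cutoff functions, and these must be absorbed by the strict margins $c$ and $c'$. This is exactly why Step 1 first makes $Q$ strict in both senses, uniformly on the unit sphere bundle. Since that gluing is precisely the content of Wu's lemma, I would verify only that our quantitative definitions match its hypotheses, and that the homogeneity degree $2$ and the $S^1$-invariance are preserved, for example by averaging over $|\lambda|=1$ at the end.
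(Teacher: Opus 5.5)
Your proposal is correct and follows essentially the same route as the paper. You add a small multiple of Kobayashi's strictly plurisubharmonic metric, absorb its bounded-below fibre Hessian into the convexity margin $c$ (the paper's version is $\widetilde Q-\frac c2G_0=(Q-cG_0)+t(H+CG_0)+(\frac c2-tC)G_0$), and then invoke Wu's homogeneous regularization as a black box. One detail you leave implicit under ``in the homogeneous sense'' should be said outright: $G_1+C|\xi|_k^2$ is convex across the origin because it is $C^1$ there with zero differential, a point the paper records explicitly.
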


\begin{proof}
By Theorem~\ref{thm:prelim-kobayashi}, there is a smooth
squared Finsler metric $H$ on $E^*$ that is strictly
plurisubharmonic off zero. Put $G_0(\xi)=|\xi|_k^2$.
Compactness of the unit sphere and degree-two homogeneity
give a constant $C\ge0$ such that $H+CG_0$ is fibrewise real convex.
Indeed, its real fibre Hessian is nonnegative off zero, and
the function is $C^1$ at zero with differential zero, so
convexity extends along every real line through zero.

Choose $t>0$ with $tC<c/2$ and put
$\widetilde Q=Q+tH$. This function is strongly
plurisubharmonic off zero, and
\begin{equation}\label{eq:an-strict-regularization}
 \widetilde Q-\frac c2G_0
 =(Q-cG_0)+t(H+CG_0)+(\frac c2-tC)G_0
\end{equation}
is fibrewise convex. The ratio $\widetilde Q/G_0$ descends
to a positive continuous function on $\mathbb P(E^*)$.
The homogeneous regularization theorem
\cite[\S2.3, Lemma~13]{WuIII} gives a smooth positive
degree-two homogeneous function $\widehat Q$ retaining both
strong real convexity and strict plurisubharmonicity.
It extends continuously by zero on the zero section.
Hence, $\sqrt{\hat{Q}}$ is the required smooth complex norm.
\end{proof}

We next apply this lemma to the polar of a semipositive metric.
\begin{proposition}
\label{prop:an-ample-semi}
Let $E$ be an ample holomorphic vector bundle on a compact
complex manifold. If $E$ admits a smooth strongly pseudoconvex
Finsler metric with semipositive Kobayashi curvature, then $E^*$
admits a smooth complex norm whose square is strongly real
convex in each fibre and strictly plurisubharmonic away from zero.
\end{proposition}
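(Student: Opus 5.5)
The plan is to build the polar (dual) norm of the given metric and show that it satisfies the hypotheses of Lemma~\ref{lem:an-regularization}; that lemma then gives the conclusion at once. Let $G=p^2$ be the given smooth strongly pseudoconvex squared metric on $E$, with $\mathcal K_G\ge0$. On $E^*$ define
\[
 Q(\xi)=\sup_{v\in E_x^\circ}\frac{|\xi(v)|^2}{G(v)},
\]
which is the square of the dual norm $p^*(\xi)=\sup_{p(v)=1}|\xi(v)|$. It is continuous, positive off zero, and satisfies $Q(\lambda\xi)=|\lambda|^2Q(\xi)$. For fixed $v$, the function $\xi\mapsto|\xi(v)|^2$ is a convex function on the fibre. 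So $p^*$ is a supremum of real seminorms and automatically satisfies the triangle inequality, even though $p$ itself need not be convex. Hence $Q$ is a continuous squared complex norm.

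\emph{Plurisubharmonicity.} Write $Q=\sup_v e^{\psi_v}$ with $\psi_v=\log|\xi(v)|^2-\log G(v)$. Locally choose holomorphic sections $v(z)$ of $E$. Along $z\mapsto(z,\xi(z))$, with $\xi$ holomorphic, the term $\log|\langle\xi,v\rangle|^2$ is plurisubharmonic. The term $-\log G(v(z))$ is plurisubharmonic in $z$ exactly when $\partial\bar\partial\log G$ has nonpositive horizontal part along holomorphic sections.

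I would argue this directly at a maximizing pair. Fix $\xi_0$ and let $v_0$ attain the sup; by compactness of the sphere such $v_0$ exists. Choose the holomorphic section $v(z)$ through $v_0$ so that $v$ has vanishing vertical first-order variation relative to the Levi-orthogonal horizontal lift $H_{G,v}$. Then $-dd^c\log G(v(z))$ at $z_0$ equals the horizontal Schur complement divided by $G$, i.e.\ the Kobayashi form $\mathcal K_G\ge0$ from \eqref{eq:conv-kobayashi} together with \eqref{eq:conv-euler}. The free vertical correction enters only through a positive semidefinite vertical block, which can only increase the value after optimizing.

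More cleanly, I would show that $Q(z,\xi(z))\ge|\xi(v(z))|^2/G(v(z))$, with equality at $z_0$, for every holomorphic $\xi(z)$ and a suitable holomorphic $v(z)$. The right-hand side has nonnegative Levi form at $z_0$, so it is a lower support function with nonnegative Levi form. A standard viscosity-type lemma then gives subharmonicity of $Q$ on discs. I expect this support-function argument, and the check that the maximizer can be chosen to give a support from below, to be the main obstacle. The maximizer may be nonunique, since $p$ is not convex, but any maximizer works for a lower support.

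\emph{Strong convexity.} We need $Q-c|\xi|_k^2$ to be fibrewise convex. Since $p\le C|v|_k$ by compactness, $p^*$ dominates a multiple of the dual Hermitian norm, but this alone does not give strict convexity. Instead I would use that $Q$ is the square of a sup of moduli of linear forms, and smooth strong pseudoconvexity of $G$ bounds the curvature of the unit sphere of $G$ from above. Concretely, $\{G\le1\}$ is contained in the ellipsoid tangent at each boundary point, whose radius is controlled uniformly by a compactness bound on the real Hessian of $G$. Dually, the polar ball contains a uniform ball tangent at each boundary point. The polar body is then uniformly convex in the sense of supporting balls, and this gives $Q-c|\xi|_k^2$ convex for small $c$.

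If this duality estimate proves delicate, I would fall back on the alternative $\widetilde Q=Q+\delta\,(\text{conv})$. Lemma~\ref{lem:an-regularization} then applies, since $E$ is ample, and yields the smooth strongly real convex norm with strictly plurisubharmonic square.
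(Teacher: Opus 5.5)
\textbf{The gap is in the strong-convexity step.} This step must show that $Q-c|\xi|_{k^*}^2$ is fibrewise convex for some $c>0$, which is exactly the input Lemma~\ref{lem:an-regularization} needs. Your containments run in the wrong direction.

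Compactness gives only an \emph{upper} bound on the real fibre Hessian of $G$. That does not place $\{G\le1\}$ inside tangent ellipsoids. Such a containment would require real strong convexity of $G$, which is not assumed and fails in general; $\{G\le1\}$ need not even be convex.

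Your dual conclusion is also the wrong property. That the polar body contains a tangent ball of uniform radius at each boundary point is an upper curvature bound, and it does not imply uniform convexity. For example, the unit ball of the $\ell^4$ norm on $\mathbb C^2$ has smooth boundary and contains such balls, yet $\|z\|_4^2$ has vanishing Hessian at $(1,0)$ in the $z_2$-direction.

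The fallback $Q+\delta(\text{conv})$ does not help either. A convex correction is in general not plurisubharmonic. Moreover, in Lemma~\ref{lem:an-regularization} the convexity of $Q-c|\xi|_{k^*}^2$ is precisely what absorbs the nonconvexity of the plurisubharmonic correction $tH$.

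What is true is the reverse containment. By \eqref{eq:an-upper-quadratic}, $\{G\le1\}$ \emph{contains}, at each $v_0$ with $G(v_0)=1$, the tangent ball $\{v_0+w:\ d_{v_0}G(w)+L|w|_k^2\le0\}$. Its radius $|\nabla G(v_0)|/(2L)$ is bounded below, by Euler's identity $d_{v_0}G(v_0)=2$ and boundedness of $\{G\le1\}$. Then $Q$ is a supremum of squared ball support functions $(|\xi(c)|+\delta|\xi|_{k^*})^2$, each uniformly strongly convex, as in Lemma~\ref{lem:app-tube}.

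The paper argues more directly. By \eqref{eq:an-legendre}, $Q(\xi)=\max_v\{2\operatorname{Re}\xi(v)-G(v)\}$, and this needs no convexity of $G$. Inserting \eqref{eq:an-upper-quadratic} at a maximizer $v_0$ and optimizing in $w$ gives \eqref{eq:an-strong-support}. Hence $Q-L^{-1}|\xi|_{k^*}^2$ is a supremum of affine functions, so it is convex.

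\textbf{Your plurisubharmonicity step is sound, and it differs from the paper's.} The paper passes to $G_\epsilon=e^{-\epsilon|z|^2}G$ to get strictly positive curvature. It then corrects the quadratic jet of the osculating section so that $G_\epsilon\circ\sigma\le G_\epsilon(v_0)$, obtains a genuinely pluriharmonic minorant of $\log Q_\epsilon$, and lets $\epsilon\downarrow0$.

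Your $C^2$ minorant is $\log|\xi(v(z))|^2-\log G(v(z))$, with $v$ horizontal at the contact point. Its Levi form there equals the semipositive Kobayashi form at $v_0$, and this suffices without any perturbation. The viscosity-type lemma you invoke is elementary:
\begin{itemize}
\item Suppose $u$ is continuous and has such a lower test function at every point. Suppose also that $u\le h$ on $\partial D$ but $u>h$ somewhere in a disc $D$, where $h$ is harmonic.
\item Then for small $\delta>0$ the function $u-h+\delta|t-t_1|^2$ has an interior maximum at some $t^*$.
\item The test function $\phi$ at $t^*$ satisfies $\phi\le u\le h-\delta|t-t_1|^2+\mathrm{const}$, with equality at $t^*$. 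This forces $\Delta\phi(t^*)\le-4\delta<0$, a contradiction.
\end{itemize}
Regard the minorant as a function on the total space of $E^*$, so that all holomorphic discs are covered, not only graphs $z\mapsto(z,\xi(z))$.
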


\begin{proof}
Let $X$ be the base, with $n=\dim_{\mathbb C}X$, and let $G$
be the given squared metric. Consider the auxiliary
polar function
\begin{equation}\label{eq:an-polar}
Q:E^*\longrightarrow [0,\infty),\quad Q(x,\xi)=\max_{[v]\in\mathbb P(E_x)}
                 \frac{|\xi(v)|^2}{G(x,v)}.
\end{equation}
It is continuous, positive away from zero, and satisfies
$Q(x,\lambda\xi)=|\lambda|^2Q(x,\xi)$; its square root is a
complex norm. We do not assume that $Q$ is smooth.

\textbf{Plurisubharmonicity.}
On a holomorphic coordinate ball with coordinates $z$, set
$G_\epsilon=e^{-\epsilon|z|^2}G$, where $\epsilon>0$.
Its Kobayashi curvature is
$\mathcal K_{G_\epsilon}=\mathcal K_G+ \epsilon dd^c|z|^2>0$, and its polar square is
$Q_\epsilon=e^{\epsilon|z|^2}Q$.
At any $v_0\ne0$, \eqref{eq:conv-euler} and strict positivity
of curvature give an $n$-dimensional Levi-negative horizontal
space contained in $\ker\partial G_\epsilon(v_0)$.
Choose a local holomorphic section $\sigma_0$ through $v_0$
whose tangent space at $x_0$ is $H_{G_\epsilon,v_0}$.
Since
$H_{G_\epsilon,v_0}\subset \ker\partial G_\epsilon(v_0)$,
the function $G_\epsilon\circ\sigma_0$ has vanishing first
derivative at $x_0$, and its Levi form there is negative
definite.

Its full quadratic Taylor term may, however, also contain pure
holomorphic terms $P(z)$ and their complex conjugates. These
terms can be removed by changing only the quadratic jet of the
section: Euler's identity gives $\partial_vG_\epsilon(v_0)\ne0$,
so choose a vertical vector $b$ with
$\partial_vG_\epsilon(v_0)(b)=1$ and subtract $bP(z)$ from
$\sigma_0(z)$. This cancels $P(z)+\overline{P(z)}$ without
changing the mixed quadratic term. Hence
\[
G_\epsilon(z,\sigma(z))
=
G_\epsilon(x_0,v_0)
+
\sum_{\alpha,\beta}
H_{\alpha\bar\beta}z_\alpha\bar z_\beta
+O(|z|^3),
\]
where $(H_{\alpha\bar\beta})$ is negative definite.
After shrinking the coordinate neighbourhood, we obtain
$$G_\epsilon(z,\sigma(z)) \le G_\epsilon(x_0,v_0),\quad \sigma(x_0)=v_0.$$
This is the local osculating-section argument in
\cite[Proposition~2.4]{Demailly99}.
Choose $v_0$ on a maximizing line for $Q_\epsilon(x_0,\xi_0)$,
where $\xi_0\ne0$. Then
$$\log Q_\epsilon(z,\xi)\ge \log|\xi(\sigma(z))|^2-\log G_\epsilon(x_0,v_0),$$
with equality at $(x_0,\xi_0)$. The right-hand side is pluriharmonic
near that point, since the pairing is nonzero there. The submean
inequality on every sufficiently small complex disc proves that
$\log Q_\epsilon$ is plurisubharmonic. As $\epsilon\downarrow0$,
the submean inequality passes to the locally uniform limit.
Thus $\log Q$, and hence $Q$, is plurisubharmonic away from zero.
The continuous extension of $Q$, equal to zero on the zero
section, is plurisubharmonic there as well by the removable
singularity theorem \cite[Chapter~I, Theorem~5.24]{Demailly}.

\textbf{Uniform real strong convexity.}
Fix a smooth Hermitian metric $k$ on $E$. Smoothness of $G$
away from zero and degree-two homogeneity give a constant $L>0$
such that, in every fibre,
\begin{equation}\label{eq:an-upper-quadratic}
 G(v+w)\le G(v)+d_vG(w)+L|w|_k^2.
\end{equation}
Indeed, the real fibre Hessian is bounded above on the compact
$k$-unit sphere bundle. Homogeneity extends the bound to all
nonzero vectors, and $G$ is $C^1$ at zero with differential zero.
Thus the estimate also holds for segments passing through zero.

 Fix $x\in X$ and
$\xi\in E_x^*$. 
Using the fibrewise
Legendre--Fenchel representation, we can write
\begin{equation}\label{eq:an-legendre}
 Q(x,\xi)
 =
 \max_{v\in E_x}
 \bigl\{2\operatorname{Re}\xi(v)-G(x,v)\bigr\}.
\end{equation}
This is the usual Legendre--Fenchel expression for the real
pairing
$(\xi,v)\to2\operatorname{Re}\xi(v);$
see, for example, \cite[\S12]{Rockafellar70}.
The identity above follows directly from homogeneity
and completing the square, and does not require $G$ to be
real convex.

Now fix $\xi_0\in E_x^*$ and choose a maximizer $v_0$ in
\eqref{eq:an-legendre}. Such a maximizer exists. 
At the maximizer $v_0$, differentiation in the real fibre
variables gives
\begin{equation}\label{eq:an-maximizer-equation}
 dG(v_0)=2\operatorname{Re}\xi_0.
\end{equation}

Let $\zeta\in E_x^*$. To estimate $Q(\xi_0+\zeta)$, evaluate
\eqref{eq:an-legendre} at $v_0+w$, where $w\in E_x$ is arbitrary.
Using \eqref{eq:an-upper-quadratic}, we obtain
\[
\begin{aligned}
Q(\xi_0+\zeta)
&\ge
2\operatorname{Re}(\xi_0+\zeta)(v_0+w)
       -G(v_0+w)\\
&\ge
2\operatorname{Re}(\xi_0+\zeta)(v_0+w)
       -G(v_0)-dG(v_0)(w)-L|w|_k^2.
\end{aligned}
\]
Since $v_0$ maximizes \eqref{eq:an-legendre},
$Q(\xi_0) = 2\operatorname{Re}\xi_0(v_0)-G(v_0),$
and \eqref{eq:an-maximizer-equation} cancels the terms involving
$\xi_0(w)$. Thus
$Q(\xi_0+\zeta) \ge Q(\xi_0)+2\operatorname{Re}\zeta(v_0) +2\operatorname{Re}\zeta(w)-L|w|_k^2.$
Since this holds for every $w\in E_x$, we may maximize in $w$.
Completing the square once more gives
$\sup_{w\in E_x} \bigl\{ 2\operatorname{Re}\zeta(w)-L|w|_k^2 \bigr\} = L^{-1}|\zeta|_{k^*}^2.$
Consequently,
\begin{equation}\label{eq:an-strong-support}
Q(\xi_0+\zeta)
\ge
Q(\xi_0)+2\operatorname{Re}\zeta(v_0)
       +L^{-1}|\zeta|_{k^*}^2.
\end{equation}

Set $c=L^{-1}$. Subtracting
$c|\xi_0+\zeta|_{k^*}^2$ from both sides of
\eqref{eq:an-strong-support} yields
\[
\begin{aligned}
(Q-c|\cdot|_{k^*}^2)(\xi_0+\zeta)
\ge{}&
(Q-c|\cdot|_{k^*}^2)(\xi_0)+
2\operatorname{Re}\zeta(v_0)
-2c\operatorname{Re}
   \langle\xi_0,\zeta\rangle_{k^*}.
\end{aligned}
\]
The right-hand side is affine in $\zeta$ and agrees with
$Q-c|\cdot|_{k^*}^2$ at $\zeta=0$. To see why this
implies convexity, fix the fibre $E_x^*$ and put
$\Phi(\xi)=Q(x,\xi)-c|\xi|_{k^*}^2$.
For every $\eta\in E_x^*$, the preceding inequality provides
a real affine function $\ell_\eta$ such that
$\ell_\eta\le\Phi$ on $E_x^*$ and $\ell_\eta(\eta)=\Phi(\eta)$.
Consequently, $\Phi(\xi)=\sup_{\eta\in E_x^*}\ell_\eta(\xi)$,
so $\Phi$ is convex as a pointwise supremum of real affine
functions. Thus
\begin{equation}\label{eq:an-polar-strong-convex}
 Q-c|\xi|_{k^*}^2
\text{ is fibrewise real convex},
 \quad c=L^{-1}>0.
\end{equation}
In particular, $Q$ is uniformly strongly real convex in the
fibres. Neither real convexity of $G$ nor uniqueness of the
maximizer $v_0$ is needed.

\textbf{A smooth replacement.}
The function $Q$ is a continuous plurisubharmonic squared norm,
and \eqref{eq:an-polar-strong-convex} gives its uniform strong
real convexity. Lemma~\ref{lem:an-regularization} therefore
provides the required smooth norm.
\end{proof}

\begin{remark}\label{rem:an-curvature-special-cases}
Proposition~\ref{prop:an-ample-semi} includes positive Kobayashi
curvature and Griffiths-semipositive Hermitian metrics as
special cases. For positive Kobayashi curvature, Wu's result
\cite[Theorem~1 and the proof of Corollary~2]{Wu22} already gives
a smooth complex norm on $E^*$ with plurisubharmonic square,
without assuming ampleness in advance. In the Hermitian case,
the dual metric $h^*$ is Griffiths seminegative, so its squared
norm is directly plurisubharmonic: the vertical Levi form is
positive and the horizontal Levi form is nonnegative by
\eqref{eq:conv-kobayashi}. This observation also requires no
ampleness assumption. For our ample examples, the single
application of Proposition~\ref{prop:an-ample-semi} suffices.
\end{remark}

\section{Stable ample bundles on Hirzebruch surfaces}
\label{sec:hirzebruch-geometry}

We first establish an algebraic approximation theorem independent
of the analytic obstruction: a specified split bundle lies in the
$C^\infty$ closure of the ample slope-stable locus, with polarization
and Chern classes fixed. We then combine this theorem with the
convex obstruction to prove Theorem~\ref{thm:hirz-main}.

Let $e\geq0$, and let
\[
 X=\mathbb F_e=\mathbb P_{\mathbb P^1}
       \bigl(\mathcal O\oplus\mathcal O(e)\bigr),
 \qquad \pi:X\longrightarrow\mathbb P^1.
\]
Write $f$ for the class of a ruling fibre and $S$ for a section of
self-intersection $-e$. For $e=0$, choose either complementary
ruling section. Thus
$$S^2=-e,\quad S\cdot f=1,\quad f^2=0, \quad K_X=-2S-(e+2)f.$$
The effective cone is generated by $S$ and $f$. On the Hirzebruch surface $\mathbb F_e$, an integral divisor $aS+bf$ is ample if and only if it is very ample. These equivalent conditions hold precisely when $a>0$ and $b>ae$, or, equivalently, when $a\geq1$ and $b\geq ae+1$.
These descriptions and the ruling cohomology formulas below follow
from the standard theory of Hirzebruch surfaces
\cite[Chapter~V, Section~2]{Hartshorne77}.
We use the following data:
\begin{equation}\label{eq:geom-data}
 \begin{aligned}
 H&=S+(3e+1)f,& P&=4H-f,\\
 L_1&=\mathcal O_X(5S-f),&
 L_2&=\mathcal O_X\bigl(-S+(12e+5)f\bigr),\\
 E_0&=L_1\oplus L_2.
 \end{aligned}
\end{equation}
Both $H$ and $P$ are very ample. Direct calculation
gives
\begin{equation}\label{eq:geom-chern}
 H^2=5e+2,\qquad c_1(E_0)=4H,\qquad
 c_2(E_0)=65e+26=13H^2.
\end{equation}
Here and below $c_2$ is identified with its degree.

\begin{theorem}\label{thm:geom-closure}
For every $e\geq0$, the holomorphic bundle $E_0$ in
\eqref{eq:geom-data} is a $C^\infty$ limit of ample
$\mu_P$-stable holomorphic structures on its underlying smooth
complex vector bundle. More precisely, there are integrable
Dolbeault operators $\bar\partial_j$ on that fixed bundle such that
$\bar\partial_j\to\bar\partial_{E_0} \text{ in every finite }C^m\text{ norm},$
and every $(E_0^{\mathrm{sm}},\bar\partial_j)$ is ample and
$\mu_P$-stable. Each has the Chern classes in
\eqref{eq:geom-chern}. Here the superscript $\mathrm{sm}$ indicates that we retain only the underlying smooth complex vector bundle, forgetting its holomorphic structure.

\end{theorem}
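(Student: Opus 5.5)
The plan is to deduce Theorem~\ref{thm:geom-closure} from Walter's irreducibility theorem \cite[Proposition~2]{Walter} for the stack of prioritary sheaves on $X=\mathbb F_e$. Here a torsion-free sheaf $\mathcal F$ is prioritary when $\operatorname{Ext}^2(\mathcal F,\mathcal F(-f))=0$. The argument combines this with openness of ampleness and of $\mu_P$-stability in flat families of vector bundles.

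\textbf{Step 1: $E_0$ is prioritary.} By Serre duality, $\operatorname{Ext}^2(E_0,E_0(-f))^*\simeq H^0(E_0^*\otimes E_0(K_X+f))$, where $K_X+f=-2S-(e+1)f$. The summands are:
\begin{itemize}
\item the diagonal terms $K_X+f$;
\item $L_1^{-1}L_2(K_X+f)=\mathcal O(-8S+(11e+5)f)$;
\item $L_2^{-1}L_1(K_X+f)=\mathcal O(4S-(13e+7)f)$.
\end{itemize}
The first two have negative $S$-coefficient. For the third, $\pi_*\mathcal O(aS+bf)=\operatorname{Sym}^a(\mathcal O\oplus\mathcal O(-e))\otimes\mathcal O(b)$, which has no sections when $b<0$. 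So all these groups vanish and $E_0$ is prioritary. Prioritariness is open, so all nearby deformations are prioritary as well.

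\textbf{Step 2: the two open loci are nonempty.} By Walter, the stack of prioritary sheaves with rank $2$, $c_1=4H$, $c_2=13H^2$ is irreducible, and its vector bundles form an open dense substack. Ampleness is open in families of bundles, because $\mathcal O_{\mathbb P(E^*)}(1)$ ample is an open condition in a proper flat family. $\mu_P$-stability is also open. In an irreducible stack, two nonempty open subsets meet in a dense open subset. It therefore suffices to exhibit one ample prioritary bundle and one $\mu_P$-stable prioritary bundle with these invariants; they need not be the same bundle.
\begin{itemize}
\item \emph{Stable point.} Walter's results show that when the discriminant $4c_2-c_1^2=36H^2$ is large and the prioritary stack is nonempty, the generic prioritary sheaf is $\mu_P$-stable. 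I would quote this directly. If that is not available, I would produce a stable bundle by a Serre construction $0\to\mathcal O(H)\to E\to I_Z(3H)\to0$ with $Z$ a general set of $10H^2$ points and check the Cayley--Bacharach condition.
\item \emph{Ample point.} I would take a cokernel of a general map $A\to B$, where $B$ is a sum of very ample line bundles and $A$ a sum of line bundles, chosen so that $\operatorname{rk}B-\operatorname{rk}A=2$ and $c(B)/c(A)$ has $c_1=4H$, $c_2=13H^2$. Such a quotient of an ample bundle is ample. If $\mathcal Hom(A,B)$ is globally generated, a general map is injective with locally free cokernel in dimension two. One still has to verify $\operatorname{Ext}^2(E,E(-f))=0$ for this cokernel, via the resolution and vanishing of $H^1,H^2$ of the relevant twists.
\end{itemize}

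\textbf{Step 3: passage to Dolbeault operators.} Since $E_0$ lies in the irreducible stack, and the ample stable locus is dense, there is a smooth pointed curve $(T,0)$ with a family $\mathcal E\to X\times T$ such that $\mathcal E_0\simeq E_0$ and $\mathcal E_t$ is ample and $\mu_P$-stable for $t\neq0$. One could use the versal deformation of $E_0$, which is unobstructed in the prioritary direction, and pick a curve meeting the dense open set. Over a small disc around $0$, Ehresmann-type trivialization identifies $\mathcal E$ smoothly with $E_0^{\mathrm{sm}}\times\Delta$. The holomorphic structures then become integrable operators $\bar\partial_t=\bar\partial_{E_0}+A_t$ with $A_t$ smooth in $(x,t)$ and $A_0=0$. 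Taking $t_j\to0$ gives convergence in every $C^m$ norm. The Chern classes are constant because the smooth bundle is fixed.

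\textbf{Main obstacle.} I expect the main difficulty to be the ample comparison bundle in Step 2. One must find line bundle data whose Chern classes hit exactly $c_1=4H$, $c_2=13H^2$, keep the cokernel locally free, and still verify prioritariness. Twisting by multiples of $H$ or $f$ and using mixed degrees in $B$ gives enough freedom, so I would first search over $A=\mathcal O(-aH)^{k}$, $B=\bigoplus\mathcal O(b_iH+c_if)$. A secondary point is that Walter's theorem is stated for sheaves, so I would confirm that the open substack of locally free sheaves is still irreducible and contains $E_0$.
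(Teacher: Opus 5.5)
Your overall architecture is the paper's. You prove $E_0$ is prioritary (your Step~1 computation matches the paper's), invoke Walter's irreducibility and openness of ampleness and $\mu_P$-stability, find one ample and one stable prioritary comparison point, and finish with a smooth trivialization of a local family. Your curve version of Step~3 is fine. Your Serre-construction fallback for the stable point also works: Cayley--Bacharach holds, no curve of $P$-degree at most $H\cdot P$ passes through $10H^2$ general points, and prioritariness follows from stability and $(K_X+f)\cdot P<0$.

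\textbf{The gap is the ample comparison point.} No cokernel of a map $A\to B$, with $B$ a direct sum of ample line bundles, can have $c_1=4H$ and $c_2=13H^2$, whatever $A$ and the degrees are.

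Put $D_0=S+(e+1)f$. For every ample class $bS+cf$ (so $b\ge1$, $c\ge be+1$), the difference $(b-1)S+(c-e-1)f$ is nef, hence globally generated on $\mathbb F_e$. So $B(-D_0)$ is globally generated, and so is its quotient $F=E(-D_0)$. Then $\mathcal O_{\mathbb P(F^*)}(1)$ is nef, which forces $c_1(F)^2-c_2(F)=\mathcal O_{\mathbb P(F^*)}(1)^3\ge0$. But $H^2=5e+2$, $H\cdot D_0=3e+2$ and $D_0^2=e+2$ give
\[
c_1(F)^2-c_2(F)=3H^2-12\,H\cdot D_0+3D_0^2=-18e-12<0 .
\]
So the obstacle you anticipated is not a search over degrees; this route is closed. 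Being a quotient of an ample split bundle is incompatible with these invariants.

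\textbf{What the paper does instead.} It uses a bundle that is only globally generated:
\[
E_{\mathrm{amp}}=\operatorname{coker}\bigl(\mathcal O(-H)\oplus\mathcal O(-3H)\xrightarrow{(a,b)}\mathcal O^{\oplus4}\bigr).
\]
Here $a_0,\dots,a_3\in H^0(\mathcal O(H))$ are linearly independent with no common zero, and $b$ is general.

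Ampleness is not inherited from the middle term. It is proved by showing that the morphism $q_b:\mathbb P(E_{\mathrm{amp}}^*)\to\mathbb P^3$ is finite. This morphism satisfies $\mathcal O_{\mathbb P(E_{\mathrm{amp}}^*)}(1)=q_b^*\mathcal O_{\mathbb P^3}(1)$, and its fibre over $w$ is $\{s_w=t_w=0\}$.

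Finiteness needs a separate dimension count. The pairs $(w,D)$, with $D$ a nonzero subdivisor of $\operatorname{div}(s_w)$, form finitely many families of dimension at most three. Vanishing of $t_w$ along such a $D$ imposes at least four conditions on $b$. Hence for general $b$ no fibre contains a curve.

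Prioritariness of $E_{\mathrm{amp}}$ then reduces to $H^0(E_{\mathrm{amp}}(K_X+f))=0$. This follows from the resolution together with $H^1(\mathcal O(K_X+f-jH))=0$ for $j=1,3$.
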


We prove the theorem by constructing an ample prioritary bundle and
a stable prioritary bundle with these Chern classes, and then using
the irreducibility of the prioritary stack. The ample comparison
bundle need not itself be stable.

\subsection{An ample comparison bundle}

\begin{proposition}\label{prop:geom-ample}
There are linearly independent sections
$a_0,\ldots,a_3\in H^0(X,\mathcal O_X(H))$ with no common zero.
Fix such a quadruple. For a general quadruple
$b_0,\ldots,b_3\in H^0(X,\mathcal O_X(3H))$, the sequence
\begin{equation}\label{eq:geom-resolution}
 0\longrightarrow\mathcal O_X(-H)\oplus\mathcal O_X(-3H)
 \xrightarrow{\,(a,b)\,}\mathcal O_X^{\oplus4}
 \longrightarrow E_{\mathrm{amp}}\longrightarrow0
\end{equation}
is exact with a locally free, ample cokernel of rank two. Its Chern
classes are $c_1(E_{\mathrm{amp}})=4H$ and
$c_2(E_{\mathrm{amp}})=13H^2$.
\end{proposition}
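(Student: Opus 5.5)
Since $H$ is very ample with $h^0(\mathcal O_X(H))\ge 4$, the four sections $a_0,\dots,a_3$ will come from the complete linear system. The linear system $|H|$ is base-point free. Two general members meet in $H^2=5e+2$ points, and a third general section avoids these points. A fourth independent section can then be added freely, and this already gives a quadruple with no common zero. For the existence of four independent sections, I will use $h^0(\mathcal O_X(S+bf))=h^0(\mathcal O_{\mathbb P^1}(b))+h^0(\mathcal O_{\mathbb P^1}(b-e))$ with $b=3e+1$, which gives $5e+4\ge4$.

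Exactness and local freeness come next. Because the $a_i$ have no common zero, the map $\mathcal O(-H)\to\mathcal O^{\oplus4}$ is a subbundle, with locally free quotient $F$ of rank $3$. The composite $\mathcal O(-3H)\to F$ is a section of $F(3H)$, and its dual description is as follows: the cokernel is locally free exactly when the $2\times2$ minors of the $4\times2$ matrix $(a,b)$ have no common zero, i.e.\ when the section $\bar b$ of $F(3H)$ is nowhere vanishing. Now $F(3H)$ is globally generated, being a quotient of $\mathcal O(3H)^{\oplus4}$, and its rank is $3>2=\dim X$. A general section of a globally generated bundle of rank exceeding the dimension vanishes nowhere, and the $b_i$ surject onto the sections coming from $H^0(\mathcal O(3H))^{\oplus4}$. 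Hence a general $b$ gives injectivity (on stalks) with locally free rank-two cokernel $E_{\mathrm{amp}}$.

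The Chern classes follow by Whitney's formula:
\[
c(E_{\mathrm{amp}})=\bigl((1-H)(1-3H)\bigr)^{-1}=(1+H+H^2)(1+3H+9H^2),
\]
truncated in degree two. This yields $c_1=4H$ and $c_2=H^2+3H^2+9H^2=13H^2$.

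Ampleness is the main point. The surjection $\mathcal O^{\oplus4}\to E_{\mathrm{amp}}$ only shows that $E_{\mathrm{amp}}$ is globally generated, i.e.\ nef. To upgrade this I will use the criterion for a globally generated bundle: it is ample if and only if the induced morphism $\mathbb P(E_{\mathrm{amp}}^*)\to\mathbb P^3$ is finite, i.e.\ no curve $C\subset X$ has a trivial quotient $E_{\mathrm{amp}}|_C\to\mathcal O_C$. Such a quotient is a fixed linear form $\lambda\in(\mathbb C^4)^*$ vanishing on the image of $(a,b)$ along $C$, meaning $\lambda\cdot a=0$ and $\lambda\cdot b=0$ on $C$. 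Since the $a_i$ are linearly independent, $\lambda\cdot a$ is a nonzero section of $\mathcal O(H)$, so $C$ lies in a member $D_\lambda$ of the three-dimensional family $\{\lambda\cdot a=0\}$, parametrized by $\lambda\in\mathbb P^3$. The claim then reduces to showing that for general $b$, no $\lambda$ has $\lambda\cdot b$ vanishing on a component of $D_\lambda$.

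I expect this dimension count to be the main obstacle. I will bound it with an incidence correspondence. For fixed $\lambda$ and a fixed component $C$ of $D_\lambda$, the condition $(\lambda\cdot b)|_C=0$ imposes $h^0(\mathcal O_C(3H))$ conditions on $b$. These conditions are independent via the surjectivity of $H^0(\mathcal O_X(3H))\to H^0(\mathcal O_C(3H))$, which follows from $H^1(\mathcal O_X(3H-C))=0$, since $3H-C$ is nef-plus-positive given $C\le H$. Their number is at least $3H\cdot C+1-p_a(C)$, and this exceeds $3$ once the restriction has at least four independent sections. I would check this case by case for $C=f$, for $C=S$, and for components in the class $H$; each gives at least $4$. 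The bad locus in the space of $b$ therefore has codimension at least $4-3\ge1$, so a general $b$ avoids it, and $E_{\mathrm{amp}}$ is ample.
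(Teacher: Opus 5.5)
Your proposal is correct and takes essentially the paper's route: finiteness of $\mathbb P(E_{\mathrm{amp}}^*)\to\mathbb P^3$, reduction to $\lambda\cdot b$ not vanishing on a component of $\{\lambda\cdot a=0\}$, and an incidence count of codimension $\ge4$ over a three-dimensional family of $\lambda$. Your arguments for the $a_i$, and for local freeness via the globally generated rank-three bundle $F(3H)$, are equivalent reformulations of the paper's dimension counts. The only slip is your case list. For $e\ge1$, members of $|H|$ can also have integral components of class $S+vf$ with $e\le v\le 3e$, which you do not cover. These are sections of $\pi$, hence smooth rational curves with $3H\cdot C\ge3$, so $h^0(\mathcal O_C(3H))\ge4$ still holds. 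The paper avoids both this case check and the $H^1$-vanishing by restricting cubes of two sections whose ratio on $C$ is nonconstant, which gives four independent conditions for every integral curve.
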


\begin{proof}
Under our convention for projective bundles, $\mathcal{O}_X(S) \simeq \mathcal{O}_{\mathbb P (\mathcal O\oplus\mathcal O(e))}(1)$ and $\mathcal{O}_X(f) \simeq \pi^* \mathcal{O}_{\mathbb{P}^1}(1)$. Thus
\[
 \pi_*\mathcal O_X(H)
 =\mathcal O_{\mathbb P^1}(3e+1)
  \oplus\mathcal O_{\mathbb P^1}(2e+1),
 \quad h^0(X,\mathcal O_X(H))=5e+4\geq 4.
\]
Let $W=H^0(X,\mathcal O_X(H))^{\oplus4}$. Since $H$ is globally generated, requiring all four sections to vanish at a fixed point imposes four independent linear conditions on $W$. As the point varies over the surface $X$, the incidence variety of common zeros has dimension $\dim W-4+2=\dim W-2$. Its projection to $W$ is closed because $X$ is projective, and is therefore a proper closed subset. Thus a general quadruple has no common zero. Moreover, since $h^0(X,\mathcal O_X(H))\ge4$, linearly independent quadruples form a nonempty open subset of $W$, so a general quadruple satisfies both properties. Fix such a quadruple $a$ and put
$$V=H^0(X,\mathcal O_X(3H))^{\oplus4},\qquad N=\dim V.$$

At a fixed point $x$, the vector $a(x)$ is nonzero. The map
$(a,b)$ drops rank at $x$ precisely when $b(x)$ belongs to the
one-dimensional subspace spanned by $a(x)$, after choosing local
trivializations of the line bundles. The evaluation map for $b$ is surjective onto
$\mathbb C^4$, so this is a codimension-three condition on $V$.
The rank-drop incidence in $X\times V$ has dimension at most
$N-1$ and has closed image in $V$. Its complement is a nonempty
Zariski-open set; for $b$ in this complement,
\eqref{eq:geom-resolution} has locally free rank-two cokernel.

We use the convention that $\mathbb P(E_{\mathrm{amp}}^*)$ parametrizes one-dimensional subspaces of the dual fibres. Dualizing \eqref{eq:geom-resolution} gives
\[
0\longrightarrow E_{\mathrm{amp}}^*
\longrightarrow(\mathbb C^4)^*\otimes\mathcal O_X
\longrightarrow
\mathcal O_X(H)\oplus\mathcal O_X(3H)
\longrightarrow0,
\]
where the last map sends $(w_i)$ to
$\left(\sum_iw_i a_i,\ \sum_iw_i b_i\right).$
Thus $E_{\mathrm{amp}}^*$ is a subbundle of the trivial bundle with fibre $(\mathbb C^4)^*$. Viewing a line in $(E_{\mathrm{amp}}^*)_x$ as a line in $(\mathbb C^4)^*$ defines a morphism
$$q_b:\mathbb P(E_{\mathrm{amp}}^*) \longrightarrow\mathbb P((\mathbb C^4)^*)\simeq\mathbb P^3.$$
The tautological line bundles are compatible with this construction, so
$\mathcal O_{\mathbb P(E_{\mathrm{amp}}^*)}(1) \simeq q_b^*\mathcal O_{\mathbb P^3}(1).$
For $w=[w_0:\cdots:w_3]$, the fibre $q_b^{-1}(w)$ is naturally identified with the common zero scheme in $X$ of
\begin{equation}\label{eq:geom-fibre}
s_w=\sum_{i=0}^3w_i a_i
\in H^0(X,\mathcal O_X(H)),
\quad
t_w=\sum_{i=0}^3w_i b_i
\in H^0(X,\mathcal O_X(3H)).
\end{equation}
Indeed, a point $x$ belongs to this zero scheme precisely when the line represented by $w$ lies in $(E_{\mathrm{amp}}^*)_x$. Since the $a_i$ are linearly independent, $s_w$ is not identically zero for any $w\in\mathbb P^3$. We shall show that, for a general choice of $b$, the sections $s_w$ and $t_w$ have no common curve for any $w$. This will ensure that all fibres of $q_b$ are finite.

If a nonzero effective divisor $D$ is a subdivisor of
$\operatorname{div}(s_w)$, both $D$ and $H-D$ are effective. Their
divisor classes therefore satisfy
\begin{equation}\label{eq:geom-subdivisors}
 D\sim uS+vf,\quad u\in\{0,1\},\quad 0\leq v\leq3e+1,
 \quad (u,v)\neq(0,0).
\end{equation}
There are therefore only finitely many possible classes. For each
such class $D_0$, form the projective parameter scheme
$$T_{D_0}= \bigl\{(w,[u],[v])\in\mathbb P^3\times|D_0|\times|H-D_0|: [s_w]=[uv]\bigr\}.$$
Here $\left|D\right|:=\mathbb{P}(H^0\left(X, \mathcal{O}_X\left(D\right)\right))$ denotes the complete linear system associated to $D$.
Empty linear systems contribute no parameters. The product of two
nonzero sections is nonzero, so the displayed condition is a closed
condition between projective spaces. For fixed $w$, its points
correspond to effective subdivisors of the fixed divisor
$\operatorname{div}(s_w)$. There are finitely many such subdivisors,
including their possible multiplicities. It follows that
\begin{equation}\label{eq:geom-paramdim}
 \dim T_{D_0}\leq3.
\end{equation}

For every integral curve $C\subset X$, the image of the restriction
map
$H^0(X,\mathcal O_X(3H))\to H^0(C,\mathcal O_C(3H))$
has dimension at least four. Indeed, very ampleness of $H$ gives
two sections $r_0,r_1$ whose restrictions have nonconstant ratio
in the function field of $C$. The restrictions of
$r_0^3,r_0^2r_1,r_0r_1^2,r_1^3$ are then linearly independent:
otherwise the nonconstant ratio would satisfy a nonzero polynomial over
$\mathbb C$, and hence would be constant. Let $D$ be a nonzero effective divisor, and choose an integral curve $C$ in its support. Any section vanishing along $D$ also vanishes along $C$. The preceding estimate therefore shows that vanishing along $D$ imposes at least four independent linear conditions on $H^0(X,\mathcal O_X(3H))$.

For a fixed point $(w,D)$ of $T_{D_0}$, the linear map
$V\to H^0(X,\mathcal O_X(3H))$ sending $b$ to $t_w$ is
surjective. Thus the condition $t_w|_D=0$ has codimension at least four in $V$.
This condition defines a closed incidence subset over $T_{D_0}$. To see that this subset is closed
without a base-change assumption, use multiplication by the
universal section defining $D$: for $D=\operatorname{div}(\sigma)$, with $[\sigma]\in|D_0|$, the spaces
$$\sigma\cdot H^0(X,\mathcal O_X(3H-D_0)) \subset H^0(X,\mathcal O_X(3H))$$
form a subbundle of a trivial vector bundle over $|D_0|$, since
the universal multiplication map
$$H^0(X,\mathcal O_X(3H-D_0))\otimes\mathcal O_{|D_0|}(-1) \longrightarrow H^0(X,\mathcal O_X(3H))\otimes\mathcal O_{|D_0|}$$
is fibrewise injective and hence has constant rank. The excluded condition is the vanishing
of $t_w$ in the quotient bundle. By \eqref{eq:geom-paramdim}, the
incidence subset has dimension at most $N-4+3=N-1$. The projection to $V$ is
closed because $T_{D_0}$ is projective. Its image is consequently
a proper closed subset.

A general $b$ avoids these finitely many bad loci as well as the
rank-drop locus. For that $b$, all fibres in
\eqref{eq:geom-fibre} are zero-dimensional, hence finite. The
proper morphism $q_b$ is therefore finite. Pullback by a finite
morphism preserves ampleness, so
$\mathcal O_{\mathbb P(E_{\mathrm{amp}}^*)}(1)$ is ample.
Finally,
$$c(E_{\mathrm{amp}})=\frac{1}{(1-H)(1-3H)} =1+4H+13H^2,$$
as asserted.
\end{proof}

\subsection{Prioritariness of the two comparison points}

For $p\in\mathbb P^1$, let $f_p=\pi^{-1}(p)$ denote the corresponding fibre, viewed as a divisor on $X$. A torsion-free coherent sheaf $E$ is called \emph{prioritary for $\pi$} if
$\operatorname{Ext}^2_X(E,E(-f_p))=0$
for every $p\in\mathbb P^1,$
where $E(D):=E\otimes\mathcal O_X(D)$.
Since all points of $\mathbb P^1$ are linearly equivalent, so are their pullbacks $f_p$. Thus the vanishing condition is independent of $p$, and we write $f$ for the common fibre class.
For a locally free sheaf $E$, Serre duality gives
\[
\operatorname{Ext}^2_X(E,E(-f))^\vee
\simeq
\operatorname{Hom}_X(E(-f),E(K_X))
\simeq
\operatorname{Hom}_X(E,E(K_X+f)).
\]
Consequently, $E$ is prioritary if and only if
\begin{equation}\label{eq:geom-prioritary}
 \operatorname{Hom}(E,E(K_X+f))=0.
\end{equation}
Equivalently, there is no nonzero global morphism from $E$ to $E(K_X+f)$.

\begin{lemma}\label{lem:geom-prioritary}
Both $E_0$ and the bundle $E_{\mathrm{amp}}$ constructed above are
prioritary for $\pi$.
\end{lemma}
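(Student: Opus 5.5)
For $E_0$ the plan is to compute the Hom group directly using the splitting. For $E_{\mathrm{amp}}$ the plan is to reduce, via the resolution \eqref{eq:geom-resolution}, to vanishing of $H^1$ of line bundles on $X$, and to compute those through $\pi_*$. In both cases we verify the criterion \eqref{eq:geom-prioritary}, namely $\operatorname{Hom}(E,E(D))=0$, where
\[
D:=K_X+f=-2S-(e+1)f .
\]

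\textbf{The split bundle.} Since $E_0=L_1\oplus L_2$, we have
\[
\operatorname{Hom}(E_0,E_0(D))=\bigoplus_{i,j}H^0\bigl(X,L_j\otimes L_i^{-1}(D)\bigr).
\]
The three relevant classes are
\[
D=-2S-(e+1)f,\qquad L_1-L_2+D=4S-(13e+7)f,\qquad L_2-L_1+D=-8S+(12e+5)f .
\]
The effective cone is generated by $S$ and $f$. Hence a class $aS+bf$ with $a<0$ or $b<0$ has no nonzero sections. Each of the three classes has a negative coefficient, so every summand vanishes.

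\textbf{The ample comparison bundle.} Apply $\operatorname{Hom}(-,E_{\mathrm{amp}}(D))$ to \eqref{eq:geom-resolution}. By left exactness this gives an injection
\[
\operatorname{Hom}(E_{\mathrm{amp}},E_{\mathrm{amp}}(D))\hookrightarrow H^0\bigl(E_{\mathrm{amp}}(D)\bigr)^{\oplus4},
\]
so it suffices to show $H^0(E_{\mathrm{amp}}(D))=0$. Next twist \eqref{eq:geom-resolution} by $\mathcal O_X(D)$ and take cohomology. This yields the exact piece
\[
H^0(\mathcal O_X(D))^{\oplus4}\to H^0(E_{\mathrm{amp}}(D))\to H^1(\mathcal O_X(D-H))\oplus H^1(\mathcal O_X(D-3H)).
\]
The left term is zero, as in the split case. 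By Serre duality, $H^1(\mathcal O_X(D-mH))$ is dual to $H^1(\mathcal O_X(K_X-D+mH))=H^1(\mathcal O_X(mH-f))$. It therefore remains to prove
\[
H^1\bigl(\mathcal O_X(mH-f)\bigr)=0\qquad\text{for } m=1,3 .
\]

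\textbf{Pushforward computation.} Write $mH-f=aS+bf$ with $a=m$ and $b=m(3e+1)-1$. We use the pushforward formula consistent with the computation of $\pi_*\mathcal O_X(H)$ in the proof of Proposition~\ref{prop:geom-ample}:
\[
\pi_*\mathcal O_X(aS+bf)=\bigoplus_{i=0}^{a}\mathcal O_{\mathbb P^1}(b-ie),\qquad R^1\pi_*\mathcal O_X(aS+bf)=0\quad\text{for } a\ge0 .
\]
The Leray spectral sequence then gives $H^1(X,\mathcal O_X(aS+bf))=\bigoplus_{i=0}^a H^1(\mathbb P^1,\mathcal O(b-ie))$. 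This vanishes as soon as $b-ae\ge-1$. For $m=1$ we get $b-ae=2e\ge0$, and for $m=3$ we get $b-ae=6e+2\ge0$, so both groups vanish.

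The main point needing care is this last step. One must get the sign convention of $\pi_*\mathcal O_X(S)$ right and confirm the vanishing of $R^1\pi_*$. I would check the convention against the displayed formula for $\pi_*\mathcal O_X(H)$. Even under the opposite convention the bounds hold with room to spare.
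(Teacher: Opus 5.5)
Your proposal is correct and follows the paper's proof essentially step for step. The split case uses the negative-coefficient classes, and $E_{\mathrm{amp}}$ is handled by the injection into $H^0(E_{\mathrm{amp}}(D))^{\oplus4}$, then the twisted resolution, then Serre duality to $H^1(\mathcal O_X(mH-f))$, and finally the ruling pushforward with minimal degree $m(2e+1)-1\ge0$. One harmless arithmetic slip: $c_1(L_2)-c_1(L_1)+D=-8S+(11e+5)f$, not $-8S+(12e+5)f$, but this does not affect the argument because the $S$-coefficient is already negative.
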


\begin{proof}
Put $D=K_X+f=-2S-(e+1)f$. By \eqref{eq:geom-prioritary}, it suffices to show that
$\operatorname{Hom}_X(E,E(D))=0$
for $E=E_0$ and $E=E_{\mathrm{amp}}$.

First, since $E_0=L_1\oplus L_2$, its endomorphism bundle decomposes as
\[
\mathcal E nd(E_0)(D)
\simeq
\mathcal O_X(D)^{\oplus2}
\oplus (L_1\otimes L_2^{-1})(D)
\oplus (L_2\otimes L_1^{-1})(D).
\]
The two off-diagonal summands are
$(L_1\otimes L_2^{-1})(D) \simeq\mathcal O_X(4S-(13e+7)f),$
and
$(L_2\otimes L_1^{-1})(D) \simeq\mathcal O_X(-8S+(11e+5)f).$
Each of these divisor classes, as well as $D$, has a negative coefficient in the $S,f$ basis. Since every effective class has nonnegative coefficients in this basis, none of these line bundles has a nonzero global section. Hence
$$\operatorname{Hom}_X(E_0,E_0(D)) = H^0(X,\mathcal E nd(E_0)(D)) =0.$$

For $E_{\mathrm{amp}}$, precomposition with the surjection
$q:\mathcal O_X^{\oplus4}\twoheadrightarrow E_{\mathrm{amp}}$
gives an injection
\[
\operatorname{Hom}_X(E_{\mathrm{amp}},E_{\mathrm{amp}}(D))
\hookrightarrow
H^0(X,E_{\mathrm{amp}}(D))^{\oplus4},
\qquad
\varphi\longmapsto\varphi\circ q.
\]
Thus it suffices to prove that $H^0(X,E_{\mathrm{amp}}(D))=0$.

Twisting \eqref{eq:geom-resolution} by $\mathcal O_X(D)$ gives
\[
0\longrightarrow
\mathcal O_X(D-H)\oplus\mathcal O_X(D-3H)
\longrightarrow
\mathcal O_X(D)^{\oplus4}
\longrightarrow
E_{\mathrm{amp}}(D)
\longrightarrow0.
\]
We already know that $H^0(X,\mathcal O_X(D))=0$.
For $j=1,3$, Serre duality gives
$H^1(X,\mathcal O_X(D-jH)) \simeq H^1(X,\mathcal O_X(jH-f))^*.$
The ruling cohomology formulas yield
\[
\pi_*\mathcal O_X(jH-f)
=
\bigoplus_{i=0}^{j}
\mathcal O_{\mathbb P^1}\bigl(j(3e+1)-1-ie\bigr),
\quad
R^1\pi_*\mathcal O_X(jH-f)=0.
\]
Every summand has degree at least
$j(2e+1)-1\ge0,$
and therefore has vanishing first cohomology on $\mathbb P^1$. By Leray and Serre duality, this proves
$H^1(X,\mathcal O_X(D-jH))=0$, $j=1,3.$

The long exact cohomology sequence now implies $H^0(X,E_{\mathrm{amp}}(D))=0$. The preceding injection then gives
$$\operatorname{Hom}_X(E_{\mathrm{amp}},E_{\mathrm{amp}}(D))=0.$$
Thus both bundles are prioritary.
\end{proof}

\subsection{A stable point with the same Chern classes}

\begin{proposition}\label{prop:geom-stable}
Every nonsplit extension
\begin{equation}\label{eq:geom-extension}
 0\longrightarrow L_1\longrightarrow V
 \longrightarrow L_2\longrightarrow0
\end{equation}
is $\mu_P$-stable and prioritary. Such extensions exist, and
$\dim\operatorname{Ext}^1(L_2,L_1)=105e+35.$
\end{proposition}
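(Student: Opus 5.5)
The proposal has three parts, each reduced to line-bundle computations on $X=\mathbb F_e$ in the basis $S,f$, using $S^2=-e$, $S\cdot f=1$, $f^2=0$ and $K_X=-2S-(e+2)f$.

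\emph{Dimension of the extension space.} We have $\operatorname{Ext}^1(L_2,L_1)\simeq H^1(X,L_1\otimes L_2^{-1})=H^1(X,\mathcal O_X(D_1))$ with $D_1=6S-(12e+6)f$.
\begin{itemize}
\item[(i)] $h^0(D_1)=0$, because $D_1$ has a negative $f$-coefficient and effective classes have nonnegative coefficients.
\item[(ii)] By Serre duality $h^2(D_1)=h^0(K_X-D_1)=h^0(-8S+(11e+4)f)$, which vanishes for the same reason.
\item[(iii)] Hence $h^1(D_1)=-\chi(\mathcal O_X(D_1))$. Riemann--Roch on the rational surface gives $\chi=1+\tfrac12(D_1^2-D_1\cdot K_X)$.
\item[(iv)] I expect $D_1^2=-180e-72$ and $D_1\cdot K_X=30e$. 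Then $\chi=-105e-35$, so the dimension is $105e+35>0$ and nonsplit extensions exist.
\end{itemize}

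\emph{Prioritariness.} This reuses the proof of Lemma~\ref{lem:geom-prioritary}. Put $D=K_X+f$. By \eqref{eq:geom-prioritary} it suffices to show $\operatorname{Hom}(V,V(D))=0$. Let $\varphi\colon V\to V(D)$.
\begin{itemize}
\item[(i)] The composite $L_1\to V\to V(D)\to L_2(D)$ lies in $H^0\bigl((L_2\otimes L_1^{-1})(D)\bigr)=0$. So $\varphi|_{L_1}$ lands in $L_1(D)$, and it vanishes since $H^0(\mathcal O_X(D))=0$.
\item[(ii)] Therefore $\varphi$ factors through a map $\bar\varphi\colon L_2\to V(D)$.
\item[(iii)] The composite $L_2\to V(D)\to L_2(D)$ vanishes because $H^0(\mathcal O_X(D))=0$. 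Hence $\bar\varphi$ lands in $L_1(D)$, and it is zero because $H^0\bigl((L_1\otimes L_2^{-1})(D)\bigr)=0$.
\end{itemize}
All three vanishings were already established in Lemma~\ref{lem:geom-prioritary}.

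\emph{Stability.} Since $V$ has rank two, it suffices to test saturated rank-one subsheaves. These are line bundles $M\subset V$ with torsion-free quotient. The reference values are:
\begin{itemize}
\item $c_1(V)\cdot P=4H\cdot(4H-f)=80e+28$, so $\mu_P(V)=40e+14$;
\item $\deg_PL_1=(5S-f)\cdot(4S+(12e+3)f)=40e+11$;
\item $\deg_PL_2=40e+17$;
\item $S\cdot P=8e+3$ and $f\cdot P=4$.
\end{itemize}
There are two cases.
\begin{itemize}
\item[(a)] If the composite $M\to L_2$ is zero, then $M\subset L_1$. Hence $\deg_PM\le 40e+11<40e+14$.
\item[(b)] If $M\to L_2$ is nonzero, it is injective with image $L_2(-D')\otimes\mathcal I_Z$ for some effective $D'$ and some zero-dimensional $Z$. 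Since $M$ is invertible, $M\simeq L_2(-D')\otimes\mathcal O(-Z')$ for the divisorial part; in effect $M\hookrightarrow L_2(-D')$, so $\deg_PM\le\deg_PL_2-D'\cdot P$.
\begin{itemize}
\item If $D'=0$, the cokernel of $M\to L_2$ is supported in codimension two. Both sheaves are reflexive of rank one, so $M\to L_2$ is an isomorphism. This splits \eqref{eq:geom-extension}, contradicting nonsplitness.
\item If $D'\ne0$, then $D'\cdot P\ge\min(S\cdot P,f\cdot P)\ge4$, because $D'$ is a nonzero combination of $S$ and $f$ with nonnegative coefficients. Hence $\deg_PM\le 40e+13<40e+14$.
\end{itemize}
\end{itemize}

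The main point needing care is case (b): it must be argued cleanly that a nonzero map $M\to L_2$ from an invertible sheaf factors through $L_2(-D')$ with $D'$ effective, and that $D'=0$ forces an isomorphism and hence a splitting. The numerical margins are small but positive: $40e+11$ and $40e+13$ against $40e+14$.
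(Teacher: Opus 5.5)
Your dimension count (Riemann--Roch together with $h^0=h^2=0$) is correct, and so is your prioritariness argument. The paper instead uses Leray for the dimension and deduces prioritariness from stability. Your filtration argument, using the three vanishings from Lemma~\ref{lem:geom-prioritary}, needs neither stability nor nonsplitness.

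\textbf{The gap is in case (b) of the stability argument when $e=0$.} You record $S\cdot P=8e+3$, but then assert $D'\cdot P\ge\min(S\cdot P,f\cdot P)\ge4$. For $e=0$ one has $P=4S+3f$ and $S\cdot P=3$. So $D'=S$ gives
\[
\deg_P M=\deg_P L_2-3=14=\mu_P(V),
\]
and the inequality is not strict. On $\mathbb F_0$ your numerical argument yields only $\mu_P$-semistability. The subsheaf $M\simeq L_2(-S)$ must be excluded by a finer use of the extension class. For $e\ge1$ your argument is complete, since $8e+3\ge11$.

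\textbf{The missing step.} Such an $M$ would lift, through $q\colon V\to L_2$, the map $j_s\colon L_2(-S)\to L_2$ given by multiplication by some nonzero $s\in H^0(X,\mathcal O_X(S))$. A lift exists exactly when the pulled-back class $j_s^*\xi\in\operatorname{Ext}^1(L_2(-S),L_1)$ vanishes. Knowing only $\xi\neq0$ does not give $j_s^*\xi\neq0$. You need injectivity, for every nonzero $s$, of the multiplication map
\[
H^1(X,\mathcal O_X(6S-6f))\xrightarrow{\ \cdot s\ }H^1(X,\mathcal O_X(7S-6f)).
\]
On $\mathbb P^1\times\mathbb P^1$, with $\mathcal O_X(S)=\mathcal O(1,0)$ and $\mathcal O_X(f)=\mathcal O(0,1)$, K\"unneth identifies this map with
\[
\bigl(H^0(\mathcal O(6))\xrightarrow{\cdot s}H^0(\mathcal O(7))\bigr)\otimes\operatorname{Id}_{H^1(\mathcal O(-6))},
\]
which is injective. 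This is how the paper closes the equality case.

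A minor point: the ideal sheaf $\mathcal I_Z$ in case (b) is unnecessary. A nonzero map of line bundles $M\to L_2$ is a nonzero section of $L_2\otimes M^{-1}$, so $M\simeq L_2(-D')$ exactly, with $D'$ its zero divisor.
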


\begin{proof}
Since $L_2$ is a line bundle, the extensions in
\eqref{eq:geom-extension} are parametrized by
$$\operatorname{Ext}^1_X(L_2,L_1) \simeq H^1(X,L_2^\vee\otimes L_1) =H^1(X,\mathcal O_X(6S-(12e+6)f)).$$
The ruling cohomology formulas give
\[
\begin{gathered}
\pi_*\mathcal O_X(6S-(12e+6)f)
=
\bigoplus_{i=0}^{6}
\mathcal O_{\mathbb P^1}(-12e-6-ie),\\
R^1\pi_*\mathcal O_X(6S-(12e+6)f)=0.
\end{gathered}
\]
Thus, by Leray and the cohomology of line bundles on
$\mathbb P^1$,
$$\dim\operatorname{Ext}^1_X(L_2,L_1) = \sum_{i=0}^{6}(12e+5+ie) =105e+35>0.$$
In particular, nonsplit extensions exist. Fix one, denote its
nonzero extension class by $\xi$, and write
$q:V\twoheadrightarrow L_2$ for the quotient map.

A direct intersection calculation gives
\begin{equation}\label{eq:geom-slopes}
\mu_P(L_1)=40e+11,\quad
\mu_P(V)=40e+14,\quad
\mu_P(L_2)=40e+17.
\end{equation}
Since $X$ is a smooth surface and $V$ has rank two, it suffices
to test slope-stability on line-bundle subsheaves
$\iota:M\hookrightarrow V$; see \cite[Remark~2.2]{RRT22}.
Indeed, a rank-one torsion-free subsheaf has a line bundle as its
double dual, and its inclusion into $V$ extends across the finitely
many missing points. This extension remains injective and has
the same first Chern class. Its quotient need not be locally free.
Thus we need to show that
$\deg_P M<\mu_P(V)$
for every such $M$.

If $q\circ\iota=0$, then $\iota$ factors through $L_1$.
A nonzero map between line bundles gives
$\deg_P M\leq\deg_P L_1<\mu_P(V).$
Otherwise, $q\circ\iota:M\to L_2$ is a nonzero map of
line bundles. Its zero divisor $D$ is effective, and
$M\simeq L_2(-D).$
If $D=0$, this map is an isomorphism, giving a right inverse
to $q$ and splitting the extension. Hence $D\neq0$, and
$$\deg_P M =\deg_P L_2-D\cdot P =\mu_P(V)+3-D\cdot P.$$
It therefore remains to exclude the possibility
$D\cdot P\leq3$.

Every effective divisor class has the form $aS+bf$ with
$a,b\geq0$, and
$S\cdot P=8e+3,$ $f\cdot P=4.$
If $e\geq1$, every nonzero effective divisor has
$P$-degree at least four. Consequently,
$\deg_P M\leq\mu_P(V)-1<\mu_P(V),$
which proves stability in this case.

Suppose now that $e=0$. The only nonzero effective class
of $P$-degree at most three is $S$. Thus the only remaining
possibility is
$M\simeq L_2(-S),$ $\deg_P M=\mu_P(V).$
In this case, $q\circ\iota$ is multiplication by a nonzero
section $s\in H^0(X,\mathcal O_X(S))$,
$j_s:L_2(-S)\to L_2.$
The map $\iota$ would lift $j_s$ through $q$.
Such a lift exists precisely when the extension pulled back
along $j_s$ splits, or, equivalently, when
$j_s^*\xi=0$ in $\operatorname{Ext}^1_X(L_2(-S),L_1).$

Under the cohomological descriptions of these extension
spaces, the pullback map is multiplication by $s$:
\begin{equation}\label{eq:geom-F0-injection}
H^1(X,\mathcal O_X(6S-6f))
\xrightarrow{\ \cdot s\ }
H^1(X,\mathcal O_X(7S-6f)).
\end{equation}

To see that this map is injective, identify
$X=\mathbb F_0$ with $\mathbb P^1\times\mathbb P^1$,
ordering the factors so that
$\mathcal O_X(S)=\mathcal O(1,0)$ and
$\mathcal O_X(f)=\mathcal O(0,1)$.
By the K\"unneth formula,
\eqref{eq:geom-F0-injection} becomes
\[
\left[
H^0(\mathbb P^1,\mathcal O(6))
\xrightarrow{\ \cdot s\ }
H^0(\mathbb P^1,\mathcal O(7))
\right]
\otimes
\operatorname{Id}_{H^1(\mathbb P^1,\mathcal O(-6))}.
\]
The first factor is injective because multiplication by a
nonzero section cannot annihilate a nonzero section.
Tensoring with a vector space preserves injectivity.
Hence $j_s^*\xi\neq0$, since $\xi\neq0$.
This rules out such a lift and excludes the equality
case. Thus $V$ is also slope-stable when $e=0$.

Finally, tensoring with a line bundle preserves slope-stability,
so $V(K_X+f)$ is stable. Moreover,
$$\mu_P(V(K_X+f))-\mu_P(V) =(K_X+f)\cdot P =-20e-10<0.$$
There is no nonzero homomorphism from a semistable sheaf
of larger slope to one of smaller slope. Therefore
$\operatorname{Hom}_X(V,V(K_X+f))=0.$
By \eqref{eq:geom-prioritary}, $V$ is prioritary.
\end{proof}

\subsection{From an irreducible stack to smooth convergence}

\begin{proof}[Proof of Theorem~\ref{thm:geom-closure}]
Let $\mathcal P$ be the stack of torsion-free sheaves on $X$
that are prioritary for $\pi$ and have invariants
$(r,c_1,c_2)=(2,4H,13H^2).$
By Walter's result \cite[Proposition~2]{Walter}, this stack is
smooth and irreducible. The proposition applies to all prioritary
sheaves with these invariants; no semistability assumption is
required.

Let $\mathcal A$ and $\mathcal S$ be the loci in $\mathcal P$
parametrizing, respectively, ample vector bundles and
$\mu_P$-stable vector bundles. Both are open: local freeness,
ampleness, and slope-stability for a fixed polarization are
open conditions in families. For ampleness, apply
\cite[Theorem~1.2.17]{Lazarsfeld} to the tautological line bundle
on the relative projective bundle. For slope-stability, use
the openness of $0$-stability in
\cite[\S3.A, discussion following Definition~3.A.1]{HuybrechtsLehn};
there $0$-stability is precisely $\mu$-stability, and the
argument is the one used in Proposition~2.3.1 of that book.
One may replace $P$ by a very ample multiple without changing
$\mu_P$-stability.
The bundle constructed in Proposition~\ref{prop:geom-ample}
is prioritary by Lemma~\ref{lem:geom-prioritary}, so
$\mathcal A$ is nonempty. Proposition~\ref{prop:geom-stable}
similarly shows that $\mathcal S$ is nonempty.
Since $\mathcal P$ is irreducible,
$\mathcal U:=\mathcal A\cap\mathcal S$
is a nonempty dense open substack.

By Lemma~\ref{lem:geom-prioritary}, $E_0$ is a point of
$\mathcal P$. To describe nearby sheaves using an ordinary
algebraic parameter space, choose a smooth local chart
$q:T\to\mathcal P,$
where $T$ is of finite type over $\mathbb C$, together with
a complex point $t_0\in T$ representing $E_0$.
The map $q$ determines a family $\mathcal E$ on $X\times T$:
each parameter $t\in T$ corresponds to the sheaf
$\mathcal E_t:=\mathcal E|_{X\times\{t\}}$
on $X$. We fix an identification
$\mathcal E_{t_0}\simeq E_0$.
Since both $q$ and $\mathcal P$ are smooth, $T$ is smooth
over $\mathbb C$. Replacing $T$ by an open neighbourhood
of $t_0$, we may assume that $T$ is irreducible.
Moreover, the central fibre $E_0$ is locally free.
By openness of local freeness and projectivity of $X$,
we may shrink $T$ further so that $\mathcal E$ is a
vector bundle on all of $X\times T$.

Set $U_T=q^{-1}(\mathcal U)$. This is a dense open subset
of $T$. Indeed, if $T'\subset T$ is any nonempty open subset,
then $q(T')$ is nonempty and open because $q$ is smooth.
Irreducibility of $\mathcal P$ gives
$q(T')\cap\mathcal U\neq\varnothing$, and hence
$T'\cap U_T\neq\varnothing$.
The complement $T\setminus U_T$ is a proper closed algebraic
subset of the smooth irreducible variety $T$, so it has empty
interior in the complex-analytic topology. Thus, in a small
analytic coordinate ball $B$ centred at $t_0$, we can choose
$t_j\in U_T\cap B$, $t_j\to t_0.$

Choose smooth complex-linear identifications
$\Phi_t:E_0^{\mathrm{sm}} \xrightarrow{\sim}\mathcal E_t^{\mathrm{sm}},$ $\Phi_{t_0}=\operatorname{id},$
depending smoothly on $t\in B$. Such identifications are
obtained by choosing a smooth complex connection on
$\mathcal E|_{X\times B}$ and using parallel transport
along radial paths in $B$, with the point of $X$ fixed.
These identifications are smooth and need not be holomorphic.

Transport the fibrewise Dolbeault operators to the fixed
smooth bundle $E_0^{\mathrm{sm}}$ by setting
$\bar\partial_t = \Phi_t^{-1}\circ\bar\partial_{\mathcal E_t}\circ\Phi_t,$ $A_t=\bar\partial_t-\bar\partial_{E_0}.$
Then
$A_t\in \Omega^{0,1}\bigl(X,\operatorname{End}(E_0^{\mathrm{sm}})\bigr)$
depends smoothly on both $x$ and $t$, and $A_{t_0}=0$.
Since $X$ is compact, for every finite integer $m\geq0$,
$\|A_{t_j}\|_{C^m(X)}\to 0.$
Hence $\bar\partial_{t_j}\to\bar\partial_{E_0}$ in every
finite $C^m$ norm.

Each $\bar\partial_{t_j}$ defines a holomorphic bundle
isomorphic to $\mathcal E_{t_j}$, which is ample and
$\mu_P$-stable because $t_j\in U_T$. Its Chern classes are
the prescribed ones. Applying the dual identifications
gives the same convergence for the induced Dolbeault
operators on $E_0^*$.
\end{proof}

\subsection{The analytic obstruction and the Hirzebruch conclusion}

\begin{proof}[Proof of Theorem~\ref{thm:hirz-main}]
For the split model in \eqref{eq:geom-data}, consider the divisor
classes
\begin{equation}\label{eq:geom-test-divisors}
 A_1=6S+(6e+1)f,\qquad A_2=S+(13e+6)f.
\end{equation}
They are ample for every $e\geq0$: their coefficients satisfy
$6e+1>6e$ and $13e+6>e$. Moreover,
\[
 \begin{aligned}
 c_1(L_1)\cdot A_1
   &=(5S-f)\cdot(6S+(6e+1)f)=-1,\\
 c_1(L_2)\cdot A_2
   &=(-S+(12e+5)f)\cdot(S+(13e+6)f)=-1.
 \end{aligned}
\]
Choose K\"ahler representatives $\eta_s\in A_s$, for example
normalized curvature forms of positive Hermitian metrics on the
ample line bundles $\mathcal O_X(A_s)$. Put
$F_s=L_s^*$ and $F_0=F_1\oplus F_2=E_0^*$. Then
$\int_Xc_1(F_s)\wedge\eta_s=1>0,\qquad d\eta_s=0.$
Theorem~\ref{thm:an-lines}, with
$\Theta_s=\eta_s$ in complex dimension two, gives a
neighbourhood of the split holomorphic structure on $F_0$ in
which no smooth complex norm has plurisubharmonic square away
from zero.

By Theorem~\ref{thm:geom-closure}, choose an ample
$\mu_P$-stable structure on the smooth bundle underlying $E_0$
whose dual Dolbeault operator lies in that neighbourhood.
Denote the resulting holomorphic bundle by $E$. It has rank two
and the Chern classes in \eqref{eq:geom-chern}. By construction,
$E^*$ admits no smooth complex norm with plurisubharmonic square
away from zero. If $E$ admitted a smooth strongly pseudoconvex metric
with semipositive Kobayashi curvature, its ampleness and
Proposition~\ref{prop:an-ample-semi} would give such
a smooth norm, a contradiction. This also excludes positive
Kobayashi curvature and Griffiths-semipositive Hermitian metrics,
as noted in Remark~\ref{rem:an-curvature-special-cases}.
This proves the theorem.
\end{proof}

\begin{remark}\label{rem:geom-base-properties}
Every $\mathbb F_e$ is simply connected, by the homotopy sequence
of its $\mathbb P^1$-bundle over $\mathbb P^1$. For $e\geq1$, it
is not biholomorphic to a product of positive-dimensional compact
complex manifolds. Such a product would have to consist of two
simply connected compact curves, hence would be
$\mathbb P^1\times\mathbb P^1$. Every effective curve on that
product has nonnegative square, whereas the section $S$ on
$\mathbb F_e$ has square $-e<0$.

For $e=0$ and $e=1$ the surfaces are Fano: the ample-cone
criterion applies to
$-K_{\mathbb F_0}=2S+2f$ and $-K_{\mathbb F_1}=2S+3f$.
Moreover, $\mathbb F_0=\mathbb P^1\times\mathbb P^1$ is
homogeneous and admits a K\"ahler--Einstein metric of positive
Ricci curvature, obtained from equally normalized
Fubini--Study metrics. This assertion concerns the base and
places no extra condition on the metric of the vector bundle.
\end{remark}

For reference, the data for the first two examples are
\[
 \begin{array}{c|c|c|c|c}
 e&H&P&c_1(E)&c_2(E)\\\hline
 0&S+f&4S+3f&4S+4f&26\\
 1&S+4f&4S+15f&4S+16f&91
 \end{array}
\]
The final bundle is chosen in the stable ample locus approaching
$E_0$; it is neither the split bundle nor a specified ample
comparison quotient. The construction establishes existence
using local deformation families, without prescribing explicit
transition matrices for the final bundle.

\section{Two ampleness problems on Hirzebruch surfaces}
\label{sec:applications}

Fix an integer $e\ge0$ and put $B=\mathbb F_e$,
$H=S+(3e+1)f$, and $P=4H-f$. Choose the rank-two bundle
$E\to B$ constructed in the proof of Theorem~\ref{thm:hirz-main}.
It is ample and $\mu_P$-stable, with $c_1(E)=4H$ and
$c_2(E)=65e+26$. Its dual admits no smooth complex norm
with plurisubharmonic square away from zero, and $E$ admits no smooth strongly
pseudoconvex Finsler metric with semipositive Kobayashi curvature.
We use these properties to prove the two assertions of
Theorem~\ref{thm:applications-main}. The additional features
of the case $e=1$ are recorded in Remark~\ref{rem:applications-f1}.

The first application uses the curvature obstruction to construct a
single connected complex-analytic hypersurface that violates Fulton's
bound for every smooth strongly pseudoconvex Finsler metric,
answering Problem~\ref{prob:fulton} within this class.
The second application addresses the Hartshorne--Schneider
conjecture by passing from a concave neighbourhood to a dual
support norm, contradicting the norm obstruction.
Finally, the positive mean curvature of a Hermitian--Einstein
metric shows that the resulting complements are $3$-convex,
so their least smooth convexity index is exactly three.

\subsection{A fixed hypersurface for the Finsler formulation}

We recall the argument in Fulton's proof
\cite[p.~27]{Fulton87} in terms of the Chern horizontal
splitting of a Griffiths-positive Hermitian metric.
Put $r=\operatorname{rank}E$
and $k=\dim_{\mathbb C}\mathcal S$. At a nonzero vector $v$, write
$T_v^{1,0}E=H_{h,v}\oplus V_v$ for the Chern horizontal splitting.
For $G_h(v)=h(v,v)$, the $(1,0)$-differential $\partial G_h$
vanishes on $H_{h,v}$, and its vertical kernel is the complex
hyperplane $v^{\perp_h}$.
At a critical point of $G_h|_{\mathcal S}$, it follows that
\[
 T_v^{1,0}\mathcal S\subset\ker\partial G_h(v)
      =H_{h,v}\oplus v^{\perp_h},\quad
 \dim_{\mathbb C}(T_v^{1,0}\mathcal S\cap H_{h,v})\ge k-r+1.
\]
Griffiths positivity makes the Levi form negative definite
on $H_{h,v}$, giving the claimed bound on its restriction.
This is the complex Levi-form bound stated in
\cite[p.~26]{Fulton87}. It applies to every critical point,
without a nondegeneracy assumption, and the submanifold need
only be locally closed in the complex-analytic topology.

We now show that this conclusion need not hold when only
ampleness is assumed, even if one can choose a different smooth
strongly pseudoconvex Finsler metric on $E$ for each test
submanifold. We construct one connected submanifold on which
every metric in this class fails the bound.

\begin{proposition}\label{prop:app-fulton-test}
Let $E$ have rank $r\ge2$ over a connected smooth projective
$n$-fold $X$. Suppose that $E$ admits no smooth strongly
pseudoconvex Finsler metric with semipositive Kobayashi curvature.
There is a connected smooth Stein hypersurface
$\mathcal S\subset E^\circ$, locally closed in the
complex-analytic topology, such that every smooth strongly
pseudoconvex squared Finsler metric $G$ has a nondegenerate
critical point on $\mathcal S$ with at least $r$ positive
Levi eigenvalues.
In particular, its negative Levi index there is at most $n-1$.
\end{proposition}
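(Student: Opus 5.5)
The plan is to first produce, for each candidate metric $G$, a local germ of complex hypersurface through a point $v\in E^\circ$ at which $G$ restricts with the required signature. I then show this property is stable under small perturbations, so that a countable family of germs serves all $G$ at once. Finally I glue that countable family into one connected Stein hypersurface by holomorphic approximation.

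\emph{Pointwise test.} Fix $G$. By hypothesis its Kobayashi curvature is not semipositive, so there are $v\ne0$ and a horizontal vector $a\in H_{G,v}$ at which the horizontal Schur complement in \eqref{eq:conv-kobayashi} is positive, i.e.\ $\mathcal L_G(a,\bar a)>0$. Consider $K=\ker\partial G(v)$, a complex hyperplane of $T^{1,0}_vE$. By \eqref{eq:conv-euler} it equals $H_{G,v}\oplus(V_v\cap\ker\partial_vG)$, where the vertical summand has dimension $r-1$. The Levi form is block diagonal for the Levi-orthogonal splitting. It is positive definite on the vertical block by strong pseudoconvexity, and it has at least one positive direction on the horizontal block.

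Take a holomorphic hypersurface germ $\mathcal S_G$ through $v$ with $T_v\mathcal S_G=K$, chosen as the graph of a holomorphic function whose quadratic jet is adjustable. Then $v$ is a critical point of $G|_{\mathcal S_G}$. Because a holomorphic parametrization $\phi$ satisfies $\partial\bar\partial(G\circ\phi)=\phi^*\partial\bar\partial G$, the Levi form of $G|_{\mathcal S_G}$ at $v$ is $\mathcal L_G|_K$, which has at least $r$ positive eigenvalues. This form may be degenerate in the remaining horizontal directions. To get a nondegenerate critical point, I perturb $v$ slightly within the open set where the Schur complement has a positive direction, which generically makes the horizontal block nondegenerate. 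Alternatively I add a small pluriharmonic-neutral quadratic term to the defining function; since it is holomorphic it shifts only the real Hessian, so I would instead move $v$. The negative Levi index is then at most $(n+r-1)-r=n-1$.

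\emph{Stability and countability.} A nondegenerate critical point with a given Levi signature persists under $C^2$-small perturbation of the germ, by the implicit function theorem applied to $d(G\circ\phi)$, and the signature is preserved. So if a germ $\mathcal S'$ is $C^2$-close to $\mathcal S_G$ on a ball of fixed size around $v$, then $G|_{\mathcal S'}$ has a nearby critical point of the same type. The tolerance depends on $G$, but I fix a countable family of germs $\{\Gamma_j\}$ that is dense in the following sense: graphs of polynomials with rational coefficients in rational local charts, centred at rational points, each given on balls of every rational radius. Every $\mathcal S_G$ is then approximated arbitrarily well by some $\Gamma_j$.

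To make the germs disjoint and discrete, I use homogeneity. The scaling $m_t(v)=tv$ is holomorphic and satisfies $G\circ m_t=|t|^2G$, so it maps critical points to critical points with the same signature. I replace $\Gamma_j$ by $m_{t_j}(\Gamma_j)$ with $t_j\to\infty$ chosen so the images are pairwise disjoint and leave every compact subset of $E^\circ$.

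\emph{Gluing (main obstacle).} I work in a Stein open set $\Omega\subset E^\circ$, for instance $E|_U$ minus a hypersurface, with $U$ affine. After a base-point adjustment (the base locations are dense, so I take $U$ containing them), all scaled germs lie in $\Omega$. I join the germs in order by thin real arcs $\gamma_j$ that escape to infinity. The set $\bigcup_j\overline{B_j}\cup\gamma_j$ is a locally finite union of small closed balls and arcs, which can be made $\mathcal O(\Omega)$-convex. On a neighbourhood of it I have a holomorphic function $h$ defining $m_{t_j}(\Gamma_j)$ near the balls and a smooth hypersurface strip along the arcs, with $dh\ne0$. By an exhaustion form of the Oka--Weil/Mergelyan theorem, I approximate $h$ in $C^2$ on this set by $F\in\mathcal O(\Omega)$, with errors $\delta_j\to0$ fast enough that each piece stays within its $G$-independent tolerance and $dF\ne0$ near the set.

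I then let $\mathcal S$ be the connected component of $\{F=0\}\cap W$ containing the pieces, where $W$ is a thin neighbourhood of the balls and arcs on which $dF\ne0$. This $\mathcal S$ is a connected smooth hypersurface, locally closed in $E^\circ$. It is Stein: I arrange that it is a closed submanifold of a Stein open subset of $\Omega$, for example by cutting $W$ out by a plurisubharmonic exhaustion built from the arcs' tube functions. I expect this step to be the hardest, namely keeping simultaneously the $C^2$ control, the smoothness of the zero set, connectedness and Steinness through the approximation.

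With $\mathcal S$ built, each $G$ yields its germ $\mathcal S_G$. Choosing $j$ with $\Gamma_j$ within half the tolerance, the stability step shows that $\mathcal S$ contains a nondegenerate critical point of $G|_{\mathcal S}$ with at least $r$ positive Levi eigenvalues.
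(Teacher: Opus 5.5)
Your argument has two real problems: the nondegeneracy step fails as written, and the gluing step is missing the key idea.

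\textbf{Nondegeneracy.} Your pointwise test is correct up to the Levi form: $\mathcal L_G|_K$ has at least $r$ positive eigenvalues, where $K=\ker\partial G(v)$. A nondegenerate critical point, however, means that the \emph{real} Hessian of $F=G\circ\phi$ is invertible. That is exactly what the implicit function theorem in your stability step needs. In holomorphic coordinates this Hessian consists of the mixed part $F_{\alpha\bar\beta}$ (the Levi form) and the pure part $F_{\alpha\beta}$. So making the Levi form nondegenerate by moving $v$ proves nothing about nondegeneracy. Moreover, the horizontal Schur complement need not be generically nondegenerate; it can have a kernel on an open set. The correct remedy is the one you discarded. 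Write the germ as a graph $w=g$ in a direction with $\partial G(v)(\partial_w)=G_w(v)\ne0$, and keep the $1$-jet of $g$ fixed. Then $F_{\alpha\bar\beta}$ is unchanged, while $F_{\alpha\beta}=A_{\alpha\beta}+G_w(v)g_{\alpha\beta}$ with $A_{\alpha\beta}$ fixed. Choose the quadratic jet of $g$ so that the pure part is $\tau\operatorname{Re}\sum_\alpha z_\alpha^2$ with $\tau$ large. This makes the real Hessian invertible and keeps the $r$ positive Levi eigenvalues.

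\textbf{Gluing.} Realizing $\mathcal S$ as a component of $\{F=0\}\cap W$ around balls joined by arcs leaves unproved exactly the properties the statement requires. These are the $\mathcal O(\Omega)$-convexity of an infinite union of balls and arcs, approximation along the arcs strong enough to keep the zero set smooth and connected, and Steinness of the resulting component. Each would need Mergelyan/Carleman-type approximation on admissible sets and a Stein-neighbourhood construction, which you only name.

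The paper avoids all of this by making every local test a graph over one fixed Stein manifold. It fixes an affine $U\subset X$ with $E|_U\simeq U\times\mathbb C^r$ and fibre coordinates $(w,t,s)$. It then chooses each test point \emph{over this fixed} $U$ with $t_0\ne0$ and $G_w(v_0)\ne0$. This is possible because having a positive horizontal direction is an open condition, and $G_w\equiv0$ on an open set would contradict $G_{w\bar w}>0$. Every local test is therefore a graph $w=g(x,t,s)$ over the connected affine manifold $\mathcal B=U\times\mathbb C^*\times\mathbb C^{r-2}$.

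The final $\mathcal S$ is the graph of a single $f\in\mathcal O(\mathcal B)$. It is therefore automatically connected, smooth, Stein (being biholomorphic to $\mathcal B$), and closed in $\{x\in U,\ t\ne0\}$. Fibre dilations place the successive compact test sets over disjoint discs in the $t$-plane. A one-variable Runge argument then makes $K_{j-1}\cup C_j$ holomorphically convex, and Oka--Weil with summable errors finishes the construction without any arcs.

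Your countable dense family of germs is a legitimate substitute for the paper's Lindel\"of covering of the space of metrics. It should be built from graphs over $\mathcal B$. You must also take $j$ large enough that the approximation error on the $j$th piece is below half of $G$'s tolerance.
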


\begin{proof}
For each metric, we construct a local graph that detects the failure
of semipositive Kobayashi curvature. We then combine countably
many such local graphs into one global graph.

\textbf{Choose a common graph domain.}
By GAGA \cite{Serre56}, $E$ is algebraic. A rational frame
of $E$ gives a trivialization over a nonempty Zariski-open subset of $X$.
Shrinking this subset, we obtain a nonempty affine open set
$U\subset X$ such that $E|_U\simeq U\times\mathbb C^r$.
Since $X$ is smooth and connected, it is irreducible.
Thus $U$ is dense and connected.
Write the fibre coordinates as $(w,t,s)$, where
$w,t\in\mathbb C$ and $s\in\mathbb C^{r-2}$, and put
$\mathcal B=U\times\mathbb C^*\times\mathbb C^{r-2}$.
This is a connected affine, hence Stein, manifold.
We will construct the required hypersurface as a holomorphic
graph $w=f(x,t,s)$ over $\mathcal B$.
The condition $t\ne0$ ensures that every such graph avoids
the zero section.

\textbf{Construct a local test for one metric.}
Fix a smooth strongly pseudoconvex squared Finsler metric $G$.
By hypothesis, its Kobayashi curvature is not semipositive.
Consequently, the horizontal Levi form of $G$ has a positive
direction at some point of $E^\circ$.

The existence of a positive horizontal direction is an open condition.
We may therefore choose such a point
$v_0=(x_0,w_0,t_0,s_0)$ with $x_0\in U$, $t_0\ne0$, and
$G_w(v_0)\ne0$. Indeed, the first two conditions define a
dense open subset, and $G_w$ cannot vanish on any nonempty
open set: otherwise $G_{w\bar w}$ would vanish there,
contrary to strong pseudoconvexity.

Let $V_{v_0}$ be the vertical tangent space and set
$K=\ker\partial G(v_0)$. The identities
in \eqref{eq:conv-euler} give
$$K=H_{G,v_0}\oplus(V_{v_0}\cap K),\quad \dim_{\mathbb C}(V_{v_0}\cap K)=r-1.$$
The two summands are Levi orthogonal. The Levi form is
positive definite on the second summand and has a positive
direction on the first. Its restriction to $K$ therefore
has at least $r$ positive eigenvalues.

Write $b_0=(x_0,t_0,s_0)\in\mathcal B$.
Since $G_w(v_0)\ne0$, the hyperplane $K$ is the graph of
a linear map over $T^{1,0}_{b_0}\mathcal B$.
We may thus choose a local holomorphic function $g$ whose
graph passes through $v_0$ and has tangent space $K$ there.
For $F=G\circ\operatorname{Graph}(g)$, we have
$\partial F(b_0)=0$. Hence $b_0$ is a critical point of $F$,
and its Levi form has at least $r$ positive eigenvalues.

We can make this critical point nondegenerate without changing
its Levi form. Choose local coordinates
$z=(z_1,\ldots,z_{n+r-1})$ on $\mathcal B$ centred at $b_0$,
and keep the value and first derivative of $g$ fixed.
The mixed derivatives $F_{\alpha\bar\beta}(b_0)$ then remain
unchanged, whereas
$F_{\alpha\beta}(b_0) =A_{\alpha\beta}+G_w(v_0)g_{\alpha\beta}(b_0)$,
where $A_{\alpha\beta}$ is fixed.
Because $G_w(v_0)\ne0$, the pure second derivatives of $F$
can be prescribed arbitrarily.

Choose the quadratic jet of $g$ so that the pure quadratic
part of $F$ is
$\tau\operatorname{Re}\sum_{\alpha=1}^{n+r-1}z_\alpha^2$.
The real Hessian of this term has eigenvalues $\pm2\tau$.
For sufficiently large $\tau$, it dominates the fixed mixed
part, making the full real Hessian invertible.
Thus the critical point is nondegenerate, while its Levi
form still has at least $r$ positive eigenvalues.

\textbf{Reduce to countably many robust local tests.}
Choose compact neighbourhoods $C'_G$ and $C_G$ of $b_0$
such that $b_0\in\operatorname{int}C'_G$,
$C'_G\subset\operatorname{int}C_G$, and $g$ is holomorphic
near $C_G$. We may take $C_G$ to be
$\mathcal O(\mathcal B)$-convex.
For example, embed the affine manifold $\mathcal B$ as
a closed submanifold of some $\mathbb C^N$ and intersect
it with a sufficiently small closed Euclidean ball
centred at $b_0$.
We also arrange that
$t(C_G)\subset\{|t-t_0|<|t_0|/4\}$.

Fix a smooth Hermitian metric on $E$, and measure the
$C^2$ distance between squared Finsler metrics on its compact
unit sphere bundle $\Sigma$.
By homogeneity, this controls their $C^2$ distance on any
fixed compact subset of $E^\circ$.
In particular, it controls this distance on a compact neighbourhood
of the graph over $C'_G$, which also contains all sufficiently
small perturbations of that graph.

The implicit function theorem implies that a nondegenerate
critical point persists under small $C^2$ perturbations.
By continuity, the Levi form also retains at least $r$
positive eigenvalues.
Moreover, Cauchy estimates convert uniform approximation
on $C_G$ into $C^2$ approximation on $C'_G$.
We therefore obtain an open metric neighbourhood
$\mathcal V_G$ and a number $\delta_G>0$ with the following
property: whenever $G'\in\mathcal V_G$ and $f$ is holomorphic
near $C_G$ with $\sup_{C_G}|f-g|<\delta_G$, the restriction
of $G'$ to the graph of $f$ has a nearby nondegenerate
critical point with at least $r$ positive Levi eigenvalues.

Let $\mathscr M$ be the space of all smooth strongly
pseudoconvex squared Finsler metrics, equipped with this
$C^2$ topology. Restriction to $\Sigma$ identifies
$\mathscr M$ with a subspace of the separable metrizable
space $C^2(\Sigma)$. Hence $\mathscr M$ is second countable
and therefore Lindel\"of.
The neighbourhoods $\mathcal V_G$ cover $\mathscr M$.
Choose a countable subcover
$\mathscr M=\bigcup_{j\ge1}\mathcal V_j$, retaining the
corresponding local graphs $g_j$, compact sets $C_j$,
and tolerances $\delta_j$.
Each local test now works simultaneously for every metric
in its associated neighbourhood $\mathcal V_j$.

\textbf{Move the local tests by fibre dilations.}
For $\lambda\in\mathbb C^*$, define
$D_\lambda(x,w,t,s)=(x,\lambda w,\lambda t,\lambda s)$
on $E|_U$, and
$d_\lambda(x,t,s)=(x,\lambda t,\lambda s)$ on $\mathcal B$.
The image of the graph of $g_j$ under $D_\lambda$
is the graph of
$g_j^\lambda=\lambda g_j\circ d_\lambda^{-1}$.
Every candidate metric satisfies
$G\circ D_\lambda=|\lambda|^2G$.
Thus dilation preserves critical points, nondegeneracy,
and the numbers of positive and negative Levi eigenvalues.
In particular, the local test remains valid for the same metric neighbourhood
$\mathcal V_j$.

The compact set becomes $d_\lambda(C_j)$, and the tolerance
becomes $|\lambda|\delta_j$.
Indeed, if a holomorphic function $h$ satisfies
$\sup_{d_\lambda(C_j)}|h-g_j^\lambda| <|\lambda|\delta_j$, then
$\widetilde h=\lambda^{-1}h\circ d_\lambda$ satisfies
$\sup_{C_j}|\widetilde h-g_j|<\delta_j$.
The local test therefore remains valid after dilation.

\textbf{Combine the tests by holomorphic approximation.}
We first record an elementary separation fact.
Suppose that $K$ and $L$ are holomorphically convex compact
sets in a Stein manifold $Y$, and that
$h\in\mathcal O(Y)$ maps them into disjoint closed discs
$\Delta_K,\Delta_L\subset\mathbb C$.
Then $K\cup L$ is holomorphically convex. To see this,
polynomial separation first forces its holomorphic hull to
lie over $\Delta_K\cup\Delta_L$. For a point over one disc
but outside the corresponding compact set, choose a holomorphic
function separating it from that compact set. Multiply this
function by polynomials in $h$ that approximate one on this
disc and zero on the other. One-variable Runge approximation
provides such polynomials, and the products separate the
point from $K\cup L$. This proves the claim.

Fix a holomorphically convex exhaustion $(L_j)_{j\ge0}$
of $\mathcal B$, with $L_j\subset\operatorname{int}L_{j+1}$.
We inductively construct global holomorphic functions $f_j$
and increasing holomorphically convex compact sets $K_j$.
Start with $f_0=0$ and $K_0=L_0$.

Suppose that $f_{j-1}$ and $K_{j-1}$ have been constructed.
Choose $M_{j-1}>0$ such that
$t(K_{j-1})\subset\{|t|\le M_{j-1}\}$.
The $t$-projection of the $j$th compact set lies in
a disc $\{|t-t_j|<|t_j|/4\}$ with $t_j\ne0$.
Choose $R_j>4M_{j-1}/3$ and dilate by
$\lambda_j=R_j/t_j$.
The new $t$-projection lies in
$\{|t-R_j|\le R_j/4\}$, which is disjoint from
$\{|t|\le M_{j-1}\}$.
Relabel the dilated data as $g_j,C_j,\delta_j$.
The separation fact, applied to the holomorphic coordinate
$t$, shows that $K_{j-1}\cup C_j$ is holomorphically convex.
On disjoint neighbourhoods of these two compact sets, prescribe
$f_{j-1}$ and $g_j$, respectively.
The Oka--Weil theorem
\cite[\S2.3]{Forstneric17}
then gives $f_j\in\mathcal O(\mathcal B)$ such that
$\sup_{K_{j-1}}|f_j-f_{j-1}|<\epsilon_j$ and
$\sup_{C_j}|f_j-g_j|<\epsilon_j$, where
$\epsilon_j=2^{-j-2}\min(1,\delta_1,\ldots,\delta_j)$.
Choose $K_j$ to be a sufficiently large member of the fixed
exhaustion so that
$K_{j-1}\cup C_j\cup L_j\subset\operatorname{int}K_j$.
This completes the induction.

The interiors of the $K_j$ exhaust $\mathcal B$, and
$\sum_j\epsilon_j<\infty$.
Hence $f_j$ converges locally uniformly to a holomorphic
function $f$ on $\mathcal B$.
For each fixed $j$, every subsequent approximation controls
the error on $C_j$, since $C_j\subset K_{m-1}$ for $m>j$.
Consequently,
\[
 \sup_{C_j}|f-g_j|
 \le \sum_{m\ge j}\epsilon_m
 \le 2^{-j-1}\delta_j
 <\delta_j.
\]
Thus the final graph retains the local test for every
metric in every $\mathcal V_j$.

\textbf{Verify the properties of the final graph.}
Set
$\mathcal S=\{(x,w,t,s)\in E|_U: t\ne0,\ w=f(x,t,s)\}$.
The graph map
$(x,t,s)\mapsto(x,f(x,t,s),t,s)$ identifies
$\mathcal S$ biholomorphically with $\mathcal B$.
Therefore $\mathcal S$ is a connected smooth Stein
hypersurface. It is closed in the open subset
$\{x\in U,\ t\ne0\}\subset E^\circ$, so it is locally
closed in the complex-analytic topology.

Every candidate metric belongs to some $\mathcal V_j$.
By the preceding approximation estimate and the defining
property of that local test, its restriction to $\mathcal S$
has a nondegenerate critical point with at least $r$
positive Levi eigenvalues.
Since $\dim_{\mathbb C}\mathcal S=n+r-1$, its negative
Levi index at that point is at most
$(n+r-1)-r=n-1$.
\end{proof}

\subsection{Concave neighbourhoods and smooth dual norms}

To address the Hartshorne--Schneider conjecture, we must exclude
arbitrary $2$-convex exhaustions, not only those arising from a
chosen bundle metric. The following lemma converts such an
exhaustion into a smooth dual norm. Its osculating-section
argument is related to \cite[Proposition~2.4 and Theorem~3.1]{Demailly99},
but the neighbourhood is not assumed to be a Finsler ball or to
have convex fibres.

\begin{lemma}\label{lem:app-tube}
Let $E\to M$ be an ample holomorphic vector bundle of rank $r$
over a compact complex manifold of dimension $n$. Suppose that an open
neighbourhood $\Omega$ of the zero section has compact closure
in $E$ and smooth boundary. Let $f$ be an outward defining
function, so $\Omega=\{f<0\}$ near the boundary and $df\ne0$
there. If the Levi form of $f$ has at least $n$ negative
eigenvalues on the complex boundary tangent space at every
boundary point, then $E^*$ admits a smooth complex norm whose
square is strongly real convex in each fibre and strictly
plurisubharmonic away from zero.
\end{lemma}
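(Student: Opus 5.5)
The plan is to build a continuous dual support norm $Q$ on $E^*$ from $\Omega$, check that it is plurisubharmonic and uniformly strongly real convex, and then invoke Lemma~\ref{lem:an-regularization}. For $(x,\xi)\in E^*$ I define
\[
 Q(x,\xi)=\sup_{v\in\overline\Omega_x}|\xi(v)|^2 ,
\]
where $\Omega_x=\Omega\cap E_x$. This is a support function of the circled hull of $\overline\Omega_x$, which need not be convex. Since $\Omega$ contains the zero section and has compact closure, $Q$ is continuous, positive off zero, and satisfies $Q(x,\lambda\xi)=|\lambda|^2Q(x,\xi)$. The supremum is attained at boundary points $v_0\in\partial\Omega$, and $\sqrt Q$ is the support function of a compact set, hence a complex norm. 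The fibre boundary $\partial\Omega_x$ need not be smooth, so I will work on the total space throughout.

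\textbf{Plurisubharmonicity.} This follows the osculating-section argument of Proposition~\ref{prop:an-ample-semi}. Fix $(x_0,\xi_0)$ and a maximizer $v_0\in\partial\Omega$. Maximality of $|\xi_0(v)|^2$ on $\overline\Omega_{x_0}$ gives two facts at $v_0$. First, the vertical tangent directions in $\ker\xi_0$ lie in the complex tangent space $T^{1,0}_{v_0}\partial\Omega$. Second, $\partial f(v_0)$ restricted to the vertical space is proportional to $\xi_0$, with $f$ increasing outward in the radial direction.

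The complex tangent space has dimension $n+r-1$ and contains the $(r-1)$-dimensional vertical piece $V\cap\ker\xi_0$. By hypothesis the Levi form has at least $n$ negative directions. I would like to find an $n$-dimensional Levi-negative subspace $W\subset T^{1,0}_{v_0}\partial\Omega$ that projects isomorphically onto $T_{x_0}M$. This is the main obstacle. Negative directions could a priori have vertical components, and nothing forces the vertical part of the Levi form to be positive, since the fibres are not assumed convex.

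My first attempt is a dimension count: a negative $n$-space meets the vertical subspace $V\cap T^{1,0}\partial\Omega$ trivially provided the Levi form is semipositive there. Since that positivity is not available, I would replace $\Omega$ by a slightly smaller sublevel set. Alternatively, I would use the maximality of $|\xi_0|^2$: the function $\log|\xi_0(v)|^2$ is pluriharmonic and maximal on $\overline\Omega_{x_0}$ at $v_0$. Restricted to the fibre, $f-c\log|\xi_0|^2$ then has a local minimum-type condition along $\partial\Omega_{x_0}$. This should give semipositivity of the Levi form of $f$ on $V\cap\ker\xi_0$ at maximizers, after a small perturbation of $f$ by $-\epsilon\log|\xi_0|$ near $v_0$. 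Then any negative $n$-space is transverse to the vertical directions, and $W$ is a graph over $T_{x_0}M$.

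Given $W$, I choose a holomorphic section $\sigma$ with $\sigma(x_0)=v_0$ and $d\sigma(T_{x_0}M)=W$. As in Proposition~\ref{prop:an-ample-semi}, I correct its quadratic jet by a vector $b$ with $\partial f(v_0)(b)=1$, which exists by outward transversality, to kill the pure quadratic terms. Then $f\circ\sigma$ has a strict local maximum $0$ at $x_0$, so $\sigma(z)\in\overline\Omega$ nearby. To make it strict I first replace $f$ by $f-\epsilon|z|^2$ as before and then let $\epsilon\to0$. This yields the pluriharmonic minorant $\log|\xi(\sigma(z))|^2\le\log Q$ with equality at $(x_0,\xi_0)$. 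The submean property then shows $\log Q$ is plurisubharmonic off zero, and the removable singularity theorem extends this across the zero section.

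\textbf{Strong convexity and conclusion.} The support function of a compact circled set need not be strongly convex, so I add a fixed quadratic term, mimicking the Legendre argument. I set
\[
 Q_1(x,\xi)=\sup_{v\in\overline\Omega_x}\bigl\{2\operatorname{Re}\xi(v)-|v|_k^2\bigr\}+\ldots .
\]
A cleaner route is to observe that the proof of Lemma~\ref{lem:an-regularization} only used $Q-c|\xi|^2$ convex. I can therefore apply it to $Q+c'|\xi|^2_{k^*}$ with $k^*$ a Griffiths-seminegative-free choice; this fails, since $|\xi|^2_{k^*}$ is not plurisubharmonic in general. Instead I use the strictly plurisubharmonic metric $H$ from Theorem~\ref{thm:prelim-kobayashi}. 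The function $Q$ is convex, being a supremum of the convex functions $|\xi(v)|^2$, and $H+CG_0$ is convex. Hence $\widetilde Q=Q+tH$ satisfies $\widetilde Q+tCG_0$ convex. To obtain strict convexity I instead take $Q+t(H+CG_0)+\ldots$.

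The expected fix is to note that $Q\ge c_0G_0$ together with convexity lets the regularization in \cite[\S2.3, Lemma~13]{WuIII} be applied to $Q+tH$ after adding $sQ$. I would check at this point whether a strict form of the convexity bound is genuinely needed. Once the hypotheses are verified, Lemma~\ref{lem:an-regularization} produces the required smooth norm on $E^*$.
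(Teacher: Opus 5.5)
Your proposal has a genuine gap. The decisive one is in the convexity step, and there is also an unproved claim in the plurisubharmonicity step.

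\textbf{The plurisubharmonicity step.} You assume without proof that $d_vf(v_0)\neq0$ at every support point (``$f$ increasing outward in the radial direction'', ``$b$ exists by outward transversality''). This is not automatic, because a fibre may be tangent to $\partial\Omega$ at a maximizer. In that case $V\subset T=\ker\partial f(v_0)$, so every $n$-dimensional subspace of the $(n+r-1)$-dimensional space $T$ meets $V$. Then no Levi-negative $W$ can project isomorphically onto $T_{x_0}M$, and no vertical $b$ with $\partial_vf(v_0)(b)=1$ exists.

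Your semipositivity on $V\cap\ker\xi_0$ is correct: $f\ge0$ near $v_0$ on the complex affine hyperplane $v_0+\ker\xi_0$, since $|\xi_0|$ cannot attain its maximum at an interior point. But it does not help in this case, because $V\cap T$ is then all of $V$, one dimension more than $\ker\xi_0$.

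The paper excludes this case using the Levi hypothesis itself. Suppose $d_vf(v_0)=0$. Then the Levi-negative $n$-plane meets $V$, so the real vertical Hessian of $f$ has a negative direction $w$. Let $\eta_0$ be the phase multiple of $\xi_0$ with $\eta_0(v_0)=\sqrt{Q(x_0,\xi_0)}$; one may choose $w$ with $\operatorname{Re}\eta_0(w)>0$. Then $f(v_0+tw)<0$ for small $t>0$, while $\operatorname{Re}\eta_0$ exceeds its maximum over $\overline\Omega\cap E_{x_0}$, a contradiction. Only after this is the fibre boundary smooth at $v_0$ with $\dim(V\cap T)=r-1$.

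\textbf{The convexity step.} More seriously, you never establish the hypothesis of Lemma~\ref{lem:an-regularization}, namely that $Q-c|\xi|_{k^*}^2$ is fibrewise convex for some $c>0$, and none of your fixes can supply it.

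Convexity of $Q$ as a supremum of the functions $|\xi(v)|^2$ gives no second-order margin. For example, $(|\xi_1|+|\xi_2|)^2$, the squared support norm of a bidisc, is constant along $(1+t,1-t)$ for small real $t$. Consequently:
adding $tH$ can destroy convexity;
adding $sQ$, or using the zeroth-order bound $Q\ge c_0G_0$, gives no Hessian control;
a Hermitian term $|\xi|_{k^*}^2$ cannot be added, since nothing makes it plurisubharmonic (in the intended application no Hermitian metric on $E^*$ has plurisubharmonic square).

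The strictness must come from the smooth boundary of $\Omega$. The missing idea is a uniform interior ball condition at support points.
\begin{enumerate}
\item By the first gap's argument, $|d_vf|\ge m>0$ on the compact set of support triples $(x,\xi,v)$ with $|\xi|_{k^*}=1$.
\item Combine this with a uniform Taylor bound $f(v_0+w)\le d_vf(v_0)(w)+C|w|_k^2$. This yields a fixed $\delta>0$ such that the closed $\delta$-ball centred at $c=v_0-\delta N_0$, with $N_0$ the outward unit normal, lies in $\overline\Omega\cap E_x$ and touches the boundary at $v_0$.
\item Hence $Q(x,\xi)=\sup_c\bigl(|\xi(c)|+\delta|\xi|_{k^*}\bigr)^2$.
\item Each $\bigl(|\xi(c)|+\delta|\xi|_{k^*}\bigr)^2-\gamma|\xi|_{k^*}^2$ is convex, with $\gamma>0$ depending only on $\delta$ and an upper bound for $|c|_k$.
\end{enumerate}

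\textbf{Two smaller points.} First, replacing $f$ by $f-\epsilon|z|^2$ is unnecessary, and it would alter $\Omega$: negative definiteness on $W$ already gives $f\circ\sigma\le-\epsilon|z|^2$. Second, continuity of $Q$ is not immediate, because $\overline\Omega\cap E_x$ may be larger than the closure of $\Omega\cap E_x$. Lower semicontinuity does follow, however, from your holomorphic-section minorant.
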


\begin{proof}
Fix a smooth Hermitian metric $k$ on $E$, and write
$k_{\mathbb R}:=\operatorname{Re}k$ for the underlying real
metric. All real gradients, covector norms, and unit normals
below are taken with respect to $k_{\mathbb R}$. We write
$d_vf$ and $D_v^2f$ for the ordinary real differential and
Hessian of the restriction of $f$ to a fibre.

\textbf{Construction of the support norm.}
Since the zero section is compact and contained in $\Omega$,
and $\overline\Omega$ is compact, we can choose constants
$0<a_0<b_0$ such that
$\{|v|_k\le a_0\}\subset\Omega$ and
$\overline\Omega\subset\{|v|_k<b_0\}$.
For each $x\in M$, put $A_x:=\overline\Omega\cap E_x$ and define
\begin{equation}\label{eq:app-support}
 p(x,\xi)=\max_{v\in A_x}|\xi(v)|,
 \qquad \xi\in E_x^*.
\end{equation}
The maximum exists because $A_x$ is compact. The linearity of
$\xi$ and the triangle inequality for the absolute value show
that $p(x,\cdot)$ is complex homogeneous and satisfies the
triangle inequality. Moreover, the two ball inclusions give
$a_0|\xi|_{k^*}\le p(x,\xi)\le b_0|\xi|_{k^*}$.
Thus $p(x,\cdot)$ is a complex norm on each fibre.
We do not yet know that $p$ varies continuously with $x$,
and we do not assume that it is smooth. We shall first prove
continuity and plurisubharmonicity, then establish a uniform
strong-convexity estimate for $Q:=p^2$. These properties will
allow us to construct a different, smooth norm.

\textbf{Boundary geometry at a support point.}
Fix $\xi_0\in E_{x_0}^*\setminus\{0\}$ and choose
$v_0\in A_{x_0}$ with
$|\xi_0(v_0)|=p(x_0,\xi_0)=:a$.
The lower bound for $p$ implies $a>0$. Also,
$v_0\in\partial\Omega$: otherwise $v_0$ would be an interior
point of the fibre domain, where the modulus of the nonconstant
linear function $\xi_0$ could not attain its maximum.

Choose a complex number $\lambda$ of modulus one such that
$\eta_0:=\lambda\xi_0$ satisfies $\eta_0(v_0)=a$, and put
$\ell:=\operatorname{Re}\eta_0$. Then
\begin{equation}\label{eq:app-support-plane}
 \ell(v)\le a\quad\text{for every }v\in A_{x_0},
 \qquad \ell(v_0)=a.
\end{equation}
Thus the affine real hyperplane $\{\ell=a\}$ supports
$A_{x_0}$ at $v_0$. Notice that the phase change does not alter
the support norm or its maximizing points.

Let $V:=T^{1,0}_{v_0}E_{x_0}$ and
$T:=\ker\partial f(v_0)$. Their complex dimensions are $r$
and $n+r-1$, respectively. By hypothesis, $T$ contains an
$n$-dimensional subspace on which the Levi form of $f$ is
negative definite.

We first prove that $d_vf(v_0)\ne0$. Suppose instead that
$d_vf(v_0)=0$. Then $V\subset T$, and the dimension formula
shows that an $n$-dimensional Levi-negative subspace of $T$
must meet $V$ nontrivially. Hence the vertical Levi form has
a negative direction. Restricting $f$ to the corresponding
complex affine line shows that its real vertical Hessian
has a negative direction as well: the sum of the two real
second derivatives on this line is negative.

The set of real vectors $w$ satisfying
$D_v^2f(v_0)(w,w)<0$ is nonempty, open, and invariant under
$w\mapsto-w$. Since $\ell$ is a nonzero real linear
functional, we may therefore choose such a $w$ with
$\ell(w)>0$. Taylor's formula and $d_vf(v_0)=0$ give
$f(v_0+tw)<0$ for all sufficiently small $t>0$.
But $\ell(v_0+tw)=a+t\ell(w)>a$, contradicting
\eqref{eq:app-support-plane}. This proves
$d_vf(v_0)\ne0$.

The real implicit function theorem now shows that the fibre
boundary is a smooth real hypersurface near $v_0$. Since
$\ell$ attains a maximum there at $v_0$, its differential
vanishes on the tangent space of this hypersurface.
Both $\ker d_vf(v_0)$ and $\ker d\ell$ have real codimension
one, so they coincide. Denote this common real tangent
hyperplane by $P_{\mathbb R}$. Moreover,
$d_vf(v_0)=c\,d\ell$ for some $c>0$: the sign follows because
the inward side $\{f<0\}$ lies in the supporting half-space
$\{\ell<a\}$ near $v_0$.

Furthermore, $f$ is nonnegative near $v_0$ on the affine
hyperplane $v_0+P_{\mathbb R}=\{\ell=a\}$.
Indeed, a point of this hyperplane with $f<0$ would be
interior to the fibre domain; a small displacement in a
direction where $\ell$ increases would then contradict
\eqref{eq:app-support-plane}.
Thus the restriction of $f$ to this affine hyperplane has a local
minimum at $v_0$, and $D_v^2f(v_0)$ is nonnegative on
$P_{\mathbb R}$.

Put $V_0:=V\cap T$. Since $f$ is real valued and
$d_vf(v_0)\ne0$, its vertical $(1,0)$-differential is a
nonzero complex linear functional. Consequently,
$\dim_{\mathbb C}V_0=r-1$.
For a vector in $V_0$, the corresponding real direction and
its $J$-rotate both lie in $P_{\mathbb R}$. The preceding
real Hessian estimate therefore gives
$\mathcal L_f|_{V_0}\ge0$.

Choose an $n$-dimensional Levi-negative subspace $W\subset T$.
Since the Levi form is nonnegative on $V_0$, we have
$W\cap V_0=\{0\}$. The kernel of the bundle projection
$d\pi:T\to T^{1,0}_{x_0}M$ is $V_0$. Hence $d\pi|_W$ is
injective and, since the dimensions agree, is an isomorphism.

\textbf{Supporting holomorphic sections and
plurisubharmonicity.}
Choose local holomorphic coordinates $z$ centred at $x_0$
and a holomorphic trivialization of $E$, writing
$v_0=(0,\nu_0)$. Since $W$ projects isomorphically onto the
base tangent space, it is the graph of a complex linear
map $A$. The local holomorphic section
$\sigma_0(z)=(z,\nu_0+Az)$ therefore satisfies
$\sigma_0(0)=v_0$ and
$d\sigma_0(T^{1,0}_0M)=W$.

Set $F_0:=f\circ\sigma_0$.
Because $W\subset\ker\partial f(v_0)$, the chain rule gives
$dF_0(0)=0$. Its Levi form at $0$ is the restriction of
$\mathcal L_f$ to $W$ and is therefore negative definite.
Its Taylor expansion has the form
$$F_0(z)=\sum_{\alpha,\beta}H_{\alpha\bar\beta} z_\alpha\bar z_\beta+2\operatorname{Re}P(z)+O(|z|^3),$$
where $(H_{\alpha\bar\beta})$ is negative definite and
$P$ is a homogeneous holomorphic quadratic polynomial.

The pure quadratic term can be removed without changing
the first jet of the section. Since
$\partial_vf(v_0)\ne0$, choose a constant vertical vector
$b$ in this trivialization with
$\partial_vf(v_0)(b)=1$, and set
$\sigma(z):=(z,\nu_0+Az-bP(z))$.
The correction has order two, so the value and tangent
space at the centre are unchanged. Its contribution to
the quadratic Taylor term of $f\circ\sigma$ is
$-2\operatorname{Re}P(z)$, which cancels the pure quadratic
part. Hence
$$f(\sigma(z))= \sum_{\alpha,\beta}H_{\alpha\bar\beta}z_\alpha\bar z_\beta +O(|z|^3).$$
After shrinking the coordinate neighbourhood, negative
definiteness gives
$f(\sigma(z))\le-\epsilon|z|^2$
for some $\epsilon>0$. Thus $\sigma(0)=v_0$ and
$\sigma(z)\in\Omega$ for every sufficiently small $z\ne0$.
In particular, $\sigma(z)\in A_z$ throughout this
neighbourhood.

We now prove continuity of $p$. Compactness of
$\overline\Omega$ gives upper semicontinuity: for a sequence
$(x_j,\xi_j)\to(x_0,\xi_0)$, choose maximizing vectors
$v_j\in A_{x_j}$. Pass first to a subsequence realizing the
upper limit and then to a convergent subsequence of $(v_j)$.
The limit lies in $A_{x_0}$. Continuity of the pairing yields
$$\limsup_j p(x_j,\xi_j)\le p(x_0,\xi_0).$$
For the reverse inequality, the section constructed above
gives $$p(z,\xi)\ge|\xi(\sigma(z))|,$$ with equality at
$(x_0,\xi_0)$. The right-hand side is continuous, so
$p$ is lower semicontinuous there. Hence $p$ is continuous
off the zero section. Its upper bound
$p(x,\xi)\le b_0|\xi|_{k^*}$ also gives continuity at zero.

The function $(z,\xi)\mapsto\xi(\sigma(z))$ is holomorphic
and nonzero near $(x_0,\xi_0)$. Therefore
$\log|\xi(\sigma(z))|$ is a local pluriharmonic function
bounded above by $\log p$, with equality at the centre.
Its mean-value property on every sufficiently small
holomorphic disc gives the submean inequality for
$\log p$ at that point. Since the point was arbitrary
and $\log p$ is continuous, $\log p$ is plurisubharmonic
on $(E^*)^\circ$.

It follows that $Q=p^2=\exp(2\log p)$ is plurisubharmonic
away from the zero section. The continuous extension of $Q$ is nonnegative
and equals zero on the zero section. At points of that
section the submean inequality is automatic. Thus $Q$
is plurisubharmonic on all of $E^*$.

\textbf{Uniform interior balls.}
Let $\mathcal K$ consist of the triples $(x,\xi,v)$ with
$|\xi|_{k^*}=1$, $v\in A_x$, and $|\xi(v)|=p(x,\xi)$.
The unit sphere bundle and $\overline\Omega$ are compact,
and the maximizing condition is closed by the continuity of
$p$. Hence $\mathcal K$ is compact.

We have proved that $d_vf$ is nonzero at every support
point. Consequently,
$m:=\min_{\mathcal K}|d_vf|_{k_{\mathbb R}^*}>0$.
Compactness and smoothness also give constants $\rho>0$
and $C\ge1$ such that, for every support point $v_0$ and
every $w\in E_x$ with $|w|_k\le\rho$, the point $v_0+w$
lies in the defining-function neighbourhood and
$$f(v_0+w)\le d_vf(v_0)(w)+C|w|_k^2.$$

Choose
$$0<\delta<\min\{a_0/2,\rho/2,m/(2C)\}.$$
At a support point $v_0$, let
$N_0:=\nabla_vf(v_0)/|\nabla_vf(v_0)|_k$ be the outward
real unit normal to the fibre boundary. By the definition
of the gradient,
$$d_vf(v_0)(w)=|d_vf(v_0)|_{k_{\mathbb R}^*} k_{\mathbb R}(w,N_0).$$

Consider the closed ball of radius $\delta$ centred at
$c:=v_0-\delta N_0$. A point $v_0+w$ belongs to this ball
precisely when $|w+\delta N_0|_k\le\delta$.
Expanding this inequality gives
$k_{\mathbb R}(w,N_0)\le-|w|_k^2/(2\delta)$, and the
triangle inequality gives $|w|_k\le2\delta<\rho$.
The uniform Taylor estimate therefore implies
$$f(v_0+w)\le-(m/(2\delta)-C)|w|_k^2\le0.$$
Thus $\overline B_k(c,\delta)\subset A_x$.

\textbf{The support norm as a supremum of ball support norms.}
For a centre $c$ obtained above, the support norm of
$\overline B_k(c,\delta)$ is
$$s_c(\xi):=\max_{|w|_k\le\delta}|\xi(c+w)| =|\xi(c)|+\delta|\xi|_{k^*}.$$
Since the ball is contained in $A_x$, we have
$s_c(\xi)\le p(x,\xi)$.

For each unit covector $\xi_0$, choose a maximizing point
$v_0$ and a phase multiple $\eta_0$ of $\xi_0$ satisfying
$\eta_0(v_0)=p(x,\xi_0)=:a$.
The affine hyperplane $\{\operatorname{Re}\eta_0=a\}$
supports $A_x$ at $v_0$ and is tangent to the fibre
boundary there. Hence its outward unit normal is $N_0$.

Because $|\eta_0|_{k^*}=1$, the real covector
$\operatorname{Re}\eta_0$ has norm one for
$k_{\mathbb R}$. Its metric dual is therefore $N_0$.
Thus $\operatorname{Re}\eta_0(N_0)=1$; together with
$|\eta_0(N_0)|\le1$, this gives $\eta_0(N_0)=1$.
It follows that
$\eta_0(c)=a-\delta>0$, since $a\ge a_0$ and
$\delta<a_0/2$. Consequently,
$s_c(\xi_0)=s_c(\eta_0)=a=p(x,\xi_0)$.

Every nonzero covector can be normalized to unit length.
By homogeneity, the preceding contact property proves
\begin{equation}\label{eq:app-support-balls}
 Q(x,\xi)=\sup_c s_c(\xi)^2,
\end{equation}
where $c$ ranges over the centres of all the supporting
balls in $E_x$. These centres satisfy
$|c|_k\le b_0+\delta$.

\textbf{Uniform strong real convexity.}
We next prove that all the functions $s_c^2$ have a common
strong-convexity constant. Fix a fibre, choose unitary
coordinates, and write $|\xi|$ for $|\xi|_{k^*}$.
All derivatives in this calculation are real derivatives
on this fibre.

The function $|\xi(c)|$ may fail to be smooth where
$\xi(c)=0$. To justify the Hessian calculation there,
introduce
$$s_{c,\tau}(\xi):= \sqrt{|\xi(c)|^2+\tau|\xi|^2}+\delta|\xi|,$$
where $0<\tau\le1$.
Its first summand is a Hermitian norm, so
$s_{c,\tau}$ is a norm that is smooth away from zero.
The bound on $|c|_k$ gives a common Lipschitz constant
$L_0:=\sqrt{(b_0+\delta)^2+1}+\delta$.
In particular,
$|ds_{c,\tau}(w)|\le L_0|w|$,
independently of $c$ and $\tau$.

For $\xi\ne0$, put $u:=\xi/|\xi|$ and decompose a real
direction as $w=\alpha u+b$, where $\alpha\in\mathbb R$
and $b$ is real-orthogonal to $u$.
Then $|w|^2=\alpha^2+|b|^2$.
The Euclidean norm satisfies
$D^2|\xi|(w,w)=|b|^2/|\xi|$.
Since the first summand of $s_{c,\tau}$ is convex, we obtain
$D^2s_{c,\tau}(w,w)\ge\delta|b|^2/|\xi|$.
Also, $s_{c,\tau}(\xi)\ge\delta|\xi|$.
Abbreviate $s:=s_{c,\tau}$ and $A:=ds_\xi(w)\in\mathbb R$.
Differentiating the square gives
$D^2(s^2)(w,w)=2A^2+2sD^2s(w,w)$, and hence
$$D^2(s^2)(w,w)\ge2A^2+2\delta^2|b|^2.$$

To control the radial component, use degree-one homogeneity:
$ds_\xi(u)=s(\xi)/|\xi|\ge\delta$.
Since $A=\alpha ds_\xi(u)+ds_\xi(b)$ and
$|ds_\xi(b)|\le L_0|b|$, we have
$|\alpha|\le(|A|+L_0|b|)/\delta$.
Therefore
$$|w|^2\le 2A^2/\delta^2+ (1+2L_0^2/\delta^2)|b|^2.$$
Set
$\gamma:=\delta^4/(\delta^2+2L_0^2)>0$.
Since $L_0\ge\delta$, we have $\gamma\le\delta^2/2$,
while
$\gamma(1+2L_0^2/\delta^2)=\delta^2$.
Multiplying the preceding estimate for $|w|^2$ by
$\gamma$ therefore gives
$\gamma|w|^2\le A^2+\delta^2|b|^2$.
Combining the two estimates yields
$$D^2(s_{c,\tau}^2)(w,w)\ge2\gamma|w|^2.$$

Thus $s_{c,\tau}^2-\gamma|\xi|^2$ has nonnegative real
Hessian away from zero. Smoothness off zero and degree-two
homogeneity imply that it extends as a $C^1$ function at
zero with differential zero. Its restriction to each real
line is therefore convex, including lines through zero.
Hence it is convex on the whole fibre.

As $\tau\downarrow0$, we have
$0\le s_{c,\tau}(\xi)-s_c(\xi)\le\sqrt{\tau}\,|\xi|$.
Thus $s_{c,\tau}^2-\gamma|\xi|^2$ converges locally
uniformly to $s_c^2-\gamma|\xi|^2$, which is consequently
convex. The constant $\gamma$ is independent of the fibre
and the centre. Finally, \eqref{eq:app-support-balls} gives
$$Q(x,\xi)-\gamma|\xi|_{k^*}^2 =\sup_c\bigl(s_c(\xi)^2-\gamma|\xi|_{k^*}^2\bigr).$$
A finite-valued supremum of convex functions is convex.
We have therefore proved that
$Q-\gamma|\xi|_{k^*}^2$ is fibrewise real convex.

\textbf{A smooth replacement.}
The continuous squared norm $Q$ is plurisubharmonic and
$Q-\gamma|\xi|_{k^*}^2$ is fibrewise convex. Applying
Lemma~\ref{lem:an-regularization} completes the proof.
\end{proof}
\begin{corollary}\label{cor:app-complement}
Let $E\to M$ be an ample holomorphic vector bundle of rank $r$
over a compact complex $n$-fold. If $E^*$ admits no smooth
complex norm with plurisubharmonic square on $(E^*)^\circ$, then
$\mathbb P(\mathcal O_M\oplus E)\setminus \mathbb P(\mathcal O_M)$
is not $r$-convex, where $r=\operatorname{rank}E$.
\end{corollary}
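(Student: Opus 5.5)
We argue by contradiction: assume $Z\setminus Y$ is $r$-convex, where $Z=\mathbb P(\mathcal O_M\oplus E)$ and $Y=\mathbb P(\mathcal O_M)$. We then produce a neighbourhood $\Omega$ of the zero section of $E$ satisfying the hypotheses of Lemma~\ref{lem:app-tube}. That lemma gives a smooth complex norm on $E^*$ whose square is strictly plurisubharmonic off zero, contradicting the hypothesis.

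\textbf{The geometry.} The complement $Z\setminus Y$ is the total space of the line bundle at infinity deleted. More precisely, $Z\setminus\mathbb P(E)$ is the total space of $E$, and $Z\setminus Y$ is a neighbourhood of the divisor at infinity $\mathbb P(E)$ together with the part of $E$ away from its zero section. Here $\dim Z=N=n+r$. An $r$-convex exhaustion $\psi$ of $Z\setminus Y$ has Levi form with at least $N-r+1=n+1$ positive eigenvalues outside a compact set $K\subset Z\setminus Y$. Being an exhaustion, $\psi\to\infty$ near $Y$, which is the zero section of $E$ inside $Z$. Choose a regular value $c$ so large that $\{\psi\ge c\}\cup Y$ is a neighbourhood of the zero section disjoint from $K$; by Sard's theorem such a $c$ exists with $d\psi\ne0$ on $\{\psi=c\}$. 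Put $\Omega=\{\psi>c\}\cup Y$. This is an open neighbourhood of the zero section with compact closure in $E$, since it avoids $K$ and, for $c$ large, lies in a compact tube around $Y$. Its boundary is smooth, and $f=c-\psi$ is an outward defining function.

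\textbf{The Levi count.} On $\partial\Omega$ the Levi form of $f=-\psi$ has at least $n+1$ negative eigenvalues on $T^{1,0}Z$. Restricting to the complex tangent hyperplane $\ker\partial f$, which has codimension one, a negative definite subspace of dimension $n+1$ meets it in dimension at least $n$. Hence $\mathcal L_f$ has at least $n$ negative eigenvalues on the complex boundary tangent space at every boundary point, which is exactly the hypothesis of Lemma~\ref{lem:app-tube}. That lemma also requires $E$ to be ample, which is assumed. It yields a smooth norm on $E^*$ with strictly plurisubharmonic square on $(E^*)^\circ$, contradicting the assumption. Hence $Z\setminus Y$ is not $r$-convex.

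\textbf{Main obstacle.} The delicate step is making sure $\Omega$ is genuinely a relatively compact neighbourhood of the zero section in $E$ with compact closure and smooth boundary. Since $\psi$ is only an exhaustion of the complement, its superlevel sets near $Y$ must be controlled. I would argue that $\{\psi\le c\}$ is compact in $Z\setminus Y$. So for $c>\max_K\psi$ the set $\{\psi>c\}\cup Y$ is open in $Z$, is contained in $Z\setminus\{\psi\le c\}$, and has closure avoiding the compact set $\{\psi\le c'\}$ for suitable $c'$. I would also have to check that $\Omega$ avoids $\mathbb P(E)$, that is, $\overline\Omega\subset E$. This fails in general, because $\{\psi>c\}$ also contains points near infinity; in that case one must replace $\Omega$ by the connected component of $\{\psi>c\}\cup Y$ containing $Y$. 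That component is a neighbourhood of $Y$, and I expect its closure stays in a fixed compact tube $\{|v|_k\le b\}$ once $c$ is large. This requires an argument that the component containing $Y$ shrinks to $Y$ as $c\to\infty$, which I would prove from the compactness of the sublevel sets intersected with the closed tube boundary $\{|v|_k=b\}$: $\psi$ is bounded on that set, so for $c$ above this bound the component cannot cross the tube boundary.
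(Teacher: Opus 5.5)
Your proposal is correct and follows the paper's proof. You take $\Omega=\{\psi>c\}\cup Y$ for a large regular value $c$, observe that the $n+1$ positive Levi eigenvalues of $\psi$ leave at least $n$ negative ones for $f=c-\psi$ on the complex tangent hyperplane of $\partial\Omega$, and invoke Lemma~\ref{lem:app-tube}.

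The one thing to correct is your claim that $\{\psi>c\}$ may contain points near infinity. If $D\Subset E$ is a tube around the zero section, then $Z\setminus D$, which contains $\mathbb P(E)$, is compact in $Z\setminus Y$, so $\psi$ is bounded there. Taking $c>\max_{Z\setminus D}\psi$, as the paper does, already gives $\overline\Omega\subset D$. Passing to a connected component is therefore unnecessary, although your component argument is also valid.
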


\begin{proof}
Write $Z=\mathbb P(\mathcal O_M\oplus E)$ and
$Y=\mathbb P(\mathcal O_M)$. The chart in which the
$\mathcal O_M$-coordinate is nonzero is the total space of
$E$, with $Y$ as its zero section. If $Z\setminus Y$ were
$r$-convex, its dimension $n+r$ would imply the existence of a smooth exhaustion
$\psi$ with at least $n+1$ positive Levi eigenvalues outside
a compact set $K$.

Choose a relatively compact disc neighbourhood $D\Subset E$
of the zero section. The set $Z\setminus D$ is compact and
disjoint from $Y$. Choose a regular value $c$ of $\psi$ greater
than its maximum on both $Z\setminus D$ and $K$. Since
$\{\psi\le c\}$ is compact in $Z\setminus Y$, the set
$\Omega=\{\psi>c\}\cup Y$
is an open neighbourhood of $Y$ with compact closure contained
in $D$ and smooth boundary. Its outward defining function
$f=c-\psi$ has at least $n+1$ negative Levi eigenvalues.
Restricting to the complex boundary hyperplane leaves at least
$n$ negative eigenvalues. Lemma~\ref{lem:app-tube} now gives
a smooth dual norm with plurisubharmonic square, a
contradiction. 
\end{proof}

\subsection{The exact convexity index}

The obstruction above rules out $2$-convexity. To show that
$3$-convexity holds, we use the positive mean curvature of a
Hermitian--Einstein metric. For a bundle $E\to X$, write
$U_E:=\mathbb P(\mathcal O_X\oplus E)\setminus\mathbb P(\mathcal O_X)$.

\begin{proposition}\label{prop:app-positive-mean}
Let $(X,\omega)$ be a compact K\"ahler $n$-fold and let
$E\to X$ have rank $r$. If $E$ admits a Hermitian metric $h$
whose mean curvature $\Lambda_\omega R_h$ is a positive
definite endomorphism, then $U_E$ is $(n+r-1)$-convex.
In particular, this holds when $E$ is slope-stable with
respect to $\omega$ and $\int_Xc_1(E)\wedge\omega^{n-1}>0$.
\end{proposition}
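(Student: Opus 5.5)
The plan is to write down an explicit exhaustion built from $h$ and to check that its Levi form has at least $(n+r)-(n+r-1)+1=2$ positive eigenvalues. We identify $U_E$ as follows. In the chart of $\mathbb P(\mathcal O_X\oplus E)$ where the $\mathcal O_X$-coordinate $a$ is nonzero, we get the total space of $E$ with $\mathbb P(\mathcal O_X)$ as zero section. The remaining points $[0:v]$ form the divisor at infinity $\mathbb P(E)$. Thus $U_E=E^\circ\cup\mathbb P(E)$, and an exhaustion must tend to $+\infty$ as $v\to0$. We take
$$\psi([a:v])=\frac{|a|^2}{h(v,v)},\qquad\text{that is,}\qquad \psi=e^{-\varphi}\ \text{on }E^\circ,\quad \varphi=\log h(v,v).$$
This is smooth on $U_E$, since $h(v,v)\ne0$ there, and it vanishes on $\mathbb P(E)$. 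Each sublevel set $\{\psi\le c\}=\{h(v,v)\ge 1/c\}\cup\mathbb P(E)$ is closed in $Z$ and disjoint from $Y$, hence compact in $U_E$. So $\psi$ is an exhaustion, and it suffices to find the two positive Levi directions at every point of $E^\circ$. The compact divisor $\mathbb P(E)$ is allowed in the exceptional compact set.

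\textbf{Levi form computation.} We have
$$i\partial\bar\partial e^{-\varphi}=e^{-\varphi}\bigl(i\partial\varphi\wedge\bar\partial\varphi-i\partial\bar\partial\varphi\bigr).$$
Fix $v\ne0$ over $x_0$. We use a holomorphic frame normal at $x_0$, so $h=\Id+O(|z|^2)$ there. Let $H_{h,v}$ be the Chern horizontal space, which in this frame consists of the coordinate directions with $v$ held fixed, and let $R=\sum v_i\partial_{v_i}$ be the radial vector. By \eqref{eq:conv-euler} applied to $G=h(v,v)$, we have $\partial\varphi(R)=1$ and $\partial\varphi|_{H_{h,v}}=0$. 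Along the complex radial line, $\varphi=\log|\lambda|^2+\mathrm{const}$, so $\mathcal L_\varphi(R,\bar R)=0$. In the normal frame the mixed terms $\mathcal L_\varphi(R,\bar\xi)$ vanish for $\xi\in H_{h,v}$, because they involve only first derivatives of $h$ at $x_0$.

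On $H_{h,v}$, the Schur-complement formula \eqref{eq:conv-kobayashi} gives $\mathcal L_\varphi=-\langle R_hv,v\rangle_h/h(v,v)$ up to the normalising factor $2\pi$; this is the sign for which a Griffiths-negative metric has plurisubharmonic square. Hence
$$\mathcal L_\psi(R,\bar R)=e^{-\varphi}>0,\qquad \mathcal L_\psi(\xi,\bar\xi)=e^{-\varphi}\,\frac{\langle R_h(\xi,\bar\xi)v,v\rangle_h}{h(v,v)}\ \ (\xi\in H_{h,v}),\qquad \mathcal L_\psi(R,\bar\xi)=0.$$

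\textbf{Main step: a positive horizontal direction.} The key point is to produce, at every $v$, some $\xi\in H_{h,v}$ with $\langle R_h(\xi,\bar\xi)v,v\rangle>0$. Taking the $\omega$-trace of the Hermitian form $\xi\mapsto\langle R_h(\xi,\bar\xi)v,v\rangle$ gives $\langle(\Lambda_\omega R_h)v,v\rangle_h>0$ by hypothesis. A Hermitian form with positive trace has a positive eigenvalue, which supplies $\xi$. Then $\mathcal L_\psi$ is positive definite on $\operatorname{span}(R,\xi)$, by the block-diagonal computation above. So the Levi form of $\psi$ has at least two positive eigenvalues on all of $E^\circ$. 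This proves $(n+r-1)$-convexity.

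\textbf{Stable case.} For the final assertion, the Donaldson--Uhlenbeck--Yau theorem gives a Hermitian--Einstein metric with $\Lambda_\omega R_h=\gamma\,\Id$. Taking traces and integrating yields $\gamma\, r\int_X\omega^n/n=\int_Xc_1(E)\wedge\omega^{n-1}$, up to a positive normalising constant. Hence $\gamma>0$ and the first part applies.

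The only delicate points are bookkeeping: the curvature sign convention in the horizontal Levi form, and the vanishing of the radial–horizontal cross terms. We handle both by computing at the centre of a normal frame.
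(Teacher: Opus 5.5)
Your proof is correct and follows the paper's argument: the same function $1/h(v,v)$, a Chern-normal frame in which the Levi form has no radial--horizontal cross terms, a positive horizontal eigenvalue from the positive $\omega$-trace, positivity in the radial vertical direction, and Donaldson--Uhlenbeck--Yau for the stable case. The only difference is cosmetic: you observe that $|a|^2/h(v,v)$ is already smooth across the divisor at infinity (it is the norm induced by $h$ on the line bundle $U_E\to\mathbb P(E)$), so you do not need the paper's cutoff $\chi(G)/G$.
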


\begin{proof}
Put $G(v)=h(v,v)$ and $\rho=G^{-1}$. At $(x,v)\in E^\circ$,
choose a holomorphic frame that is unitary and Chern-normal
at $x$. For $u\in T_x^{1,0}X$ and $w\in E_x$, the Levi form
has no mixed horizontal--vertical terms and is given by
\begin{equation}\label{eq:app-reciprocal-levi}
 \mathcal L_\rho((u,w),\overline{(u,w)})
 =\frac{2\pi}{G^2}h(R_h(u,\bar u)v,v)
   -\frac{|w|_h^2}{G^2}
   +\frac{2|h(w,v)|^2}{G^3}.
\end{equation}
The horizontal block has positive $\omega$-trace, namely
$2\pi G^{-2}h((\Lambda_\omega R_h)v,v)$, and therefore has
at least one positive eigenvalue. The vertical block is
positive on $\mathbb Cv$: its value at $w=v$ is $G^{-1}$.
Thus $\rho$ has at least two positive Levi eigenvalues.
To obtain an exhaustion, identify $E$ with the affine chart
$[1:v]$ in $\mathbb P(\mathcal O_X\oplus E)$. Choose a smooth
function $\chi:[0,\infty)\to[0,1]$ that equals one near zero
and vanishes for large arguments. Then $\psi=\chi(G)/G$ on
$E^\circ$, extended by zero near the divisor at infinity, is
smooth on $U_E$. It tends to infinity near the removed zero
section, and its sublevel sets are compact because the ambient
projective bundle is compact. Outside a compact subset of
$U_E$, it agrees with $\rho$ and has at least two positive
Levi eigenvalues. Since $\dim U_E=n+r$, this proves
$(n+r-1)$-convexity.

For the last assertion, the Donaldson--Uhlenbeck--Yau theorem
\cite{Donaldson85,UhlenbeckYau86} gives a Hermitian--Einstein
metric with $\Lambda_\omega R_h=\lambda\Id_E$, where
$$\lambda=\frac{n\int_Xc_1(E)\wedge\omega^{n-1}}{r\int_X\omega^n}>0.$$ The first assertion applies.
\end{proof}

\begin{corollary}\label{cor:app-sharp-index}
For every bundle $E\to\mathbb F_e$ constructed in the proof of
Theorem~\ref{thm:hirz-main}, the complement $U_E$ is
$3$-convex but not $2$-convex. Its least smooth convexity index
is therefore three.
\end{corollary}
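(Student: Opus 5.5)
The plan is to combine the two results just proved, Corollary~\ref{cor:app-complement} for the negative half and Proposition~\ref{prop:app-positive-mean} for the positive half, using $n=r=2$. Here $U_E$ has complex dimension $n+r=4$.

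\textbf{Failure of $2$-convexity.} By construction in the proof of Theorem~\ref{thm:hirz-main}, $E$ is ample and its dual Dolbeault operator lies in the neighbourhood given by Theorem~\ref{thm:an-lines}. Hence $E^*$ admits no smooth complex norm whose square is plurisubharmonic on $(E^*)^\circ$. These are exactly the hypotheses of Corollary~\ref{cor:app-complement}, with $M=\mathbb F_e$ compact and $r=\operatorname{rank}E=2$. The corollary then says $U_E=\mathbb P(\mathcal O\oplus E)\setminus\mathbb P(\mathcal O)$ is not $2$-convex. No further work is needed beyond checking that the phrase ``no smooth norm with plurisubharmonic square'' matches precisely. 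The theorem excludes such norms away from the zero section, which is what the corollary uses.

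\textbf{$3$-convexity.} I would apply the final assertion of Proposition~\ref{prop:app-positive-mean} with $n=2$ and $r=2$, which gives $(n+r-1)=3$-convexity. I need a Kähler form $\omega$ for which $E$ is slope-stable and $\int c_1(E)\wedge\omega>0$. Take $\omega$ to be a Kähler representative of $c_1(\mathcal O_X(P))$; this is possible since $P$ is ample. Then slope stability with respect to $\omega$ is $\mu_P$-stability, which $E$ has. Moreover $\int c_1(E)\wedge\omega=4H\cdot P=c_1(E)\cdot P$. Using $\mu_P(V)=40e+14$ from \eqref{eq:geom-slopes} and $\operatorname{rank}=2$, this equals $80e+28>0$. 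Alternatively, positivity follows because $H$ and $P$ are ample.

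The only delicate point is the identification of analytic $\omega$-stability in the Donaldson--Uhlenbeck--Yau theorem with algebraic $\mu_P$-stability. The two agree because $\omega$-degrees of coherent subsheaves are computed by $c_1\cdot[\omega]=c_1\cdot P$. Saturated subsheaves of a rank-two bundle are reflexive rank-one sheaves, hence line bundles, as in Proposition~\ref{prop:geom-stable}. So this step is routine, and the least smooth convexity index is therefore exactly three.
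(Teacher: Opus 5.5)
Your proposal is correct and follows the paper's proof essentially step for step. Corollary~\ref{cor:app-complement}, fed by the dual-norm obstruction from Theorem~\ref{thm:an-lines}, rules out $2$-convexity, and Proposition~\ref{prop:app-positive-mean} with $n=r=2$, $[\omega]=c_1(\mathcal O(P))$ and $c_1(E)\cdot P=4H\cdot P=80e+28>0$ gives $3$-convexity. The only step you leave implicit, which the paper states, is that $1$-convexity would imply $2$-convexity; this is what rules out index one and makes the least index exactly three.
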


\begin{proof}
Choose a K\"ahler form $\omega$ with
$[\omega]=c_1(\mathcal O_{\mathbb F_e}(P))$.
Since $E$ is $\mu_P$-stable, it is slope-stable with respect
to $\omega$. Moreover,
$$\int_{\mathbb F_e}c_1(E)\wedge\omega
 =c_1(E)\cdot P=4H\cdot P>0,$$
because $H$ and $P$ are ample.
Proposition~\ref{prop:app-positive-mean}, with $n=r=2$,
therefore shows that $U_E$ is $3$-convex.
The dual-norm obstruction established in the proof of
Theorem~\ref{thm:hirz-main}, together with
Corollary~\ref{cor:app-complement}, excludes $2$-convexity.
Since $1$-convexity implies $2$-convexity, the least index is
exactly three.
\end{proof}

\begin{proof}[Proof of Theorem~\ref{thm:applications-main}]
Fix $e\ge0$ and choose the ample $\mu_P$-stable bundle
$E\to B=\mathbb F_e$ constructed in the proof of
Theorem~\ref{thm:hirz-main}. It has the stated Chern classes.
Moreover, $E$ admits no smooth strongly pseudoconvex Finsler
metric with semipositive Kobayashi curvature, and $E^*$ admits
no smooth complex norm with plurisubharmonic square away from zero.

\textbf{The smooth strongly pseudoconvex Finsler formulation.}
Apply Proposition~\ref{prop:app-fulton-test} with $n=r=2$.
It gives one fixed connected smooth Stein hypersurface
$\mathcal S\subset E^\circ$, locally closed in the complex-analytic
topology, with the required testing property.
Since the total space of $E$ has complex dimension four,
$\dim\mathcal S=3$. For every smooth strongly pseudoconvex
squared Finsler metric $G$ on $E$, the proposition supplies
a nondegenerate critical point of
$G|_{\mathcal S}$ with at least two positive Levi eigenvalues.
There can therefore be at most one negative eigenvalue.
This proves assertion~(\ref{item:applications-fulton}).

An extension of Fulton's lemma to this ample bundle would
instead require at least
$\dim\mathcal S-\operatorname{rank}E+1=3-2+1=2$
negative Levi eigenvalues at every critical point
\cite[p.~26]{Fulton87}. Thus no metric in this class satisfies
that bound, giving a negative answer to the smooth strongly
pseudoconvex Finsler formulation in Problem~\ref{prob:fulton}.
This also excludes Hermitian metrics, which are special cases
of smooth strongly pseudoconvex Finsler metrics. The hypersurface
$\mathcal S$ is independent of $G$, although the critical point
may depend on it. Only restrictions of squared bundle metrics
are tested; no conclusion is asserted for arbitrary smooth
functions on $\mathcal S$.

\textbf{The projective complement.}
Put $Z=\mathbb P(\mathcal O_B\oplus E)$ and
$Y=\mathbb P(\mathcal O_B)\simeq B$.
With our line convention, the affine chart where the
$\mathcal O_B$-coordinate is nonzero is identified with the total
space of $E$, and $Y$ becomes its zero section. The normal
directions along this section are precisely the fibres of $E$;
equivalently,
$N_{Y/Z}\simeq\operatorname{Hom}(\mathcal O_B,E)\simeq E$.
Thus $N_{Y/Z}$ is ample. Since $Z$ has dimension four and $Y$
has dimension two, the Hartshorne--Schneider conjecture would
make $Z\setminus Y$ $2$-convex
\cite[Conjecture~4.1]{Demailly99}. However,
Corollary~\ref{cor:app-sharp-index} shows that $Z\setminus Y$
is $3$-convex but not $2$-convex, giving the claimed counterexample
with least smooth convexity index three.

\textbf{Geometric properties.}
The base $B$ is rational, so $Y\simeq B$ is rational.
The algebraic bundle $E$ is trivial on a dense Zariski-open
subset $U\subset B$, so $Z|_U\simeq U\times\mathbb P^2$.
Therefore $Z$ is birational to $B\times\mathbb P^2$ and is
rational as well. The homotopy exact sequence of
$\mathbb P^1\to B\to\mathbb P^1$ gives $\pi_1(B)=0$.
Applying it again to $\mathbb P^2\to Z\to B$ gives
$\pi_1(Z)=0$, since both the fibre and the base are simply
connected. Thus $Y$ and $Z$ are simply connected.
For $e\ge1$, the surface $Y\simeq\mathbb F_e$ is not a
product by Remark~\ref{rem:geom-base-properties}. When $e=0$,
$Y\simeq\mathbb P^1\times\mathbb P^1$, and the other
conclusions remain unchanged. This completes
assertion~(\ref{item:applications-complement}).
\end{proof}

\noindent\textbf{Acknowledgments.}
The first named author is partially supported by the National
Key Research and Development Program of China (NKPs)
[Grant Number 2024YFA1013201], the Natural Science Foundation
of Chongqing (NSFCQ) [Grant Number CSTB2024NSCQ-LZX0040],
and the Special Project of Chongqing Municipal Science and
Technology Bureau [Grant Number 2025CCZ015].
The second named author  is sponsored by the National Key R\&D Program of China (Grant No. 2024YFA1013200) and the National Natural Science Foundation of China (Grant No. 12671100).

\noindent\textbf{AI Declaration}: The authors
used ChatGPT as an auxiliary tool in this work. The authors verified
and completed all mathematical arguments and take full responsibility for
the content of this paper.

\end{document}